\DeclareMathAlphabet{\mathpzc}{OT1}{pzc}{m}{it}
\newtheorem{theorem}{Theorem}[section]
\newtheorem{lemma}[theorem]{Lemma}
\newtheorem{proposition}[theorem]{Proposition}
\newtheorem{corollary}[theorem]{Corollary}
\theoremstyle{definition}
\newtheorem{definition}[theorem]{Definition}
\newtheorem{remark}[theorem]{Remark}
\numberwithin{equation}{section}
\newcommand{\be}{\begin{equation}}
\newcommand{\ee}{\end{equation}}
\newcommand{\ind}{\mathbf{1}}	% indicator
\newcommand{\R}{\mathbb{R}} % real numbers
\newcommand{\e}{\varepsilon}
\newcommand{\red}{\textcolor{red}}
\title[Martingale Optimal Transport and Schr\"odinger Bridges]{Martingale Optimal Transport and Martingale Schrödinger Bridges for Calibration of Stochastic Volatility Models}
\author[A.\ Zitridis]{Antonios Zitridis}
\address{A.\ Zitridis,
	Department of Mathematics, University of Michigan,\newline
    \null\hspace{2.2cm}
    Ann Arbor, MI, 48109, United States.}
\email{zitridis@umich.edu}
\begin{document}
	\begin{abstract}
Motivated by recent developments in the calibration of stochastic volatility models (SVMs for short), we study continuous-time formulations of martingale optimal transport and martingale Schr\"odinger bridge problems. We establish duality formulas and also provide alternative proofs, via different techniques, of duality results previously established in the mathematical finance literature. Applications include calibration of SVMs to SPX options, as well as joint calibration to both SPX and VIX options.
	\end{abstract}
	
%	\subjclass[2020]{}
%	\keywords{}
	
	\maketitle
	{
			%\hypersetup{linkcolor=black}
			%\setcounter{tocdepth}{1}
			%\tableofcontents
		}

%\tableofcontents

\vspace{-9mm}

\section{Introduction}

\noindent
Our aim in this paper is to study Martingale Optimal Transport and Martingale Schrödinger Bridges, with a particular motivation coming from the calibration of stochastic volatility models. We establish duality formulas and also provide alternative proofs, using different techniques, for results that have previously appeared in the mathematical finance literature. Our approach relies on techniques that are commonly employed in mean field game theory (see \cite{lasry2007mean, huang2006large} and subsequent works). Our focus is theoretical, and we do not address numerical approaches or their efficiency.
%\vspace{1mm}

\subsection{Motivation and Overview}

\noindent
Since the introduction of the Black–Scholes model \cite{black1973pricing}, a significant amount of effort has been devoted to developing sophisticated volatility models that properly capture the market dynamics. To correct for any systematic errors and ensure accurate predictions, it is necessary to properly adjust these models to Vanilla options\footnote{The term Vanilla options refers to financial contracts that give the holder the right to buy or sell an asset at a predetermined price within a specific time frame.}, such as an S\&P 500 (SPX) option, which depends on the eponymous stock market index\footnote{The Standard and Poor's 500 index, or simply the S\&P 500, tracks the performance of 500 of the largest publicly traded companies in the United States.}, and VIX options (options on the volatility index). These problems are called calibration problems and are the subject of intense ongoing research; we refer to the survey \cite{guo2021optimal} and the references therein. 
\vspace{1.5mm}

\noindent
To calibrate stochastic volatility models, many authors have employed the tools of optimal transport and Schr\"odinger bridges, which are related to the transportation of a probability measure to another in an ``optimal'' way. In these cases, the optimization is with respect to a given cost functional and the relative entropy, respectively. Moreover, due to the arbitrage-free nature of many studies, additional martingale constraints are considered. 
\vspace{1.5mm}

\noindent
We begin by giving an overview of the calibration of stochastic volatility models to Vanilla options and, after that, the role of Martingale Optimal Transport and Martingale Schrödinger Bridges in this context.

%We give an overview of these problems and outline their role in the calibration of stochastic volatility models.

%The goal of this note is to give rigorous proofs to certain duality theorems related to these minimization problems and to illustrate their connection with a special category of Hamilton-Jacobi equations.
\vspace{3mm}

\noindent
Let $\mu_1,\mu_2\in\mathcal{P}(\R)$ be two Borel probability measures with finite second moments and in convex order\footnote{This means that $\int h(x)d\mu_1(x)\leq \int h(x)d\mu_2(x)$ for any convex function $h:\R\rightarrow \R$.} $\mu_1\leq_c \mu_2$, and $T_2>T_1> 0$. In the filtered probability space $(\Omega,\mathcal{F},\mathbb{F}=(\mathcal{F}_t)_{t\ge 0},\mathbb{P}_0)$, with $\Omega:=C([0,T_2];\R^2)$, we consider the continuous time stochastic volatility model 
\be\label{SVM}
\begin{cases}
    dX_t=\sigma(X_t,Y_t)dW_t,\\
    dY_t= b(X_t,Y_t)dt+\tau_1(X_t,Y_t)dW_t+\tau_2(X_t,Y_t)dW_t^{\perp},
\end{cases}
\ee
where $W,W^{\perp}$ are independent $\mathbb{P}_0$-Brownian motions and $\sigma,b,\tau_1,\tau_2:\R^2\rightarrow \R$. We also assume that $\sigma(x,y)=x\tilde{\sigma}(y)$ for some other $\tilde{\sigma}:\R\rightarrow \R$. There are many known stochastic volatility models that appear in the mathematical finance literature captured by this general form, such as the Heston model and the SABR model. In \eqref{SVM}, $X_t$ represent the price of S\&P500, while $Y_t$ is the value of its volatility. We let $\mathcal{P}$ be the space of probability measures over $(\Omega,\mathcal{F},\mathbb{F}=(\mathcal{F}_t)_{t\ge 0})$. Calibrating \eqref{SVM} to SPX options, is equivalent to finding a $\mathbb{P}\in\mathcal{P}$, such that the laws $\mathcal{L}^{\mathbb{P}}(X_{T_1})=\mu_1$ and $\mathcal{L}^{\mathbb{P}}(X_{T_2})=\mu_2$\footnote{In this case $T_1,T_2$ are the maturities of the options, while the constraints on the law give the prices of the call options with maturities $T_1,T_2$:  $\mathbb{E}^{\mathbb{P}}[(X_{T_i}-K)_+]=\int (x-K)_+d\mu_i(x)$, $i=1,2$, for all strike prices $K\in \R$.}. At the same time, we require $X_t$ to be an $((\mathcal{F}_t),\mathbb{P})$-martingale and $\mathbb{P}$ to be close to $\mathbb{P}_0$, in the appropriate sense, so that the models remain as close as possible to the initial belief \eqref{SVM}. 
\vspace{1mm}

\noindent
In many works, the role of martingale optimal transport and martingale Schr\"odinger bridges is crucial, especially in quantifying what is the optimal $\mathbb{P}$, or how close $\mathbb{P}$ will be to $\mathbb{P}_0$; depending on whether there is an initial belief $\mathbb{P}_0$. In particular, in the former case, $\mathbb{P}$ is chosen in the sense of martingale optimal transport, while in the latter case, we establish the probability measure $\mathbb{P}$ that minimizes the relative entropy between $\mathbb{P}$ and $\mathbb{P}_0$. These minimization problems often admit duality formulas, which allow for numerical approximations of the optimal $\mathbb{P}^*$, whenever it exists. We refer to \cite{guo2017local,guo2022joint,guyon2022dispersion} for numerical simulations using the duality formulas.
\vspace{2mm}

\subsection{Calibration via Martingale Optimal Transport}
We start with the setup for the Martingale Optimal Transport problem (in continuous time). In terms of the discussion from the previous subsection, this corresponds to calibration when there is no initial belief $\mathbb{P}_0$, therefore our problem in this case is model-independent.
\vspace{1.5mm}

\noindent
Let $T_0\in [0,T_2]$, $0<T_1<T_2$, $\mu_0\in \mathcal{P}(\R)$ and $L:\R\rightarrow \R$ a given cost function. The Martingale Optimal Transport problem we will be studying is the minimization problem
\be\label{mot}
V(T_0,\mu_0,\mu_1,\mu_2):=\inf_{\mathbb{P}\in \mathcal{P}_{m}}\bigg\{ \mathbb{E}^{\mathbb{P}}\left[\int_{T_0}^{T_2} L(\sigma_t^2)dt\right]\bigg\}.
\ee
The minimization is over the subset the set of probability measures $\mathcal{P}$:
\be \label{mcondition}
\begin{split}
    \mathcal{P}_{m}:=\bigg\{ \mathbb{P}\in\mathcal{P}\bigg| X_t \text{ is an } ((\mathcal{F}_t),&\mathbb{P})\text{-martingale,}\\[-12pt]
    &\text{ a semimartingale, and } \mathcal{L}^{\mathbb{P}}(X_{T_i})=\mu_i,\;i=0,1,2\bigg\}.
\end{split}
\ee
$X_t$, being a martingale and $\mathbb{P}$-semimartingale, satisfies $dX_t=\sigma_tdB_t$ with $(B_t)_{t\ge 0}$ being a $1$-dimensional Brownian motion over the filtered probability space $(\Omega, \mathcal{F},\mathbb{F},\mathbb{P})$ and $\sigma_t$ is an adapted process with values in $\R$. Note that if $T_0>T_1$, then the constraint $\mathcal{L}^\mathbb{P}(X_{T_1})$ becomes irrelevant for \eqref{mot}. This is the case of  continuous time martingale optimal transport studied in \cite{huesmann2019benamou}.
\vspace{2mm}

\noindent
It is worth mentioning a brief history of \eqref{mot}. Martingale optimal transport originates from the classical Monge-Kantorovich optimal transport problem \cite{monge1781memoire, kantorovich1948problem}. In discrete time and in the case where $T_1<T_0$, this problem can be stated as
\be \label{ot}
\inf\left\{ \mathbb{E}[c(X_{T_0},X_{T_2})]\; | \mathcal{L}^{\mathbb{P}}(X_{T_0})= \mu_0,\; \mathcal{L}^{\mathbb{P}}(X_{T_2})=\mu_2\right\},
\ee
where $c:\R^{2}\rightarrow \R^+$ a given function and where the minimization is over all random vectors $(X_0,X_{T_2})$ in a probability space $(\Omega,\mathcal{F},\mathbb{P})$, which are called transportation plans. For every transportation plan and $\omega\in \Omega$, the mass from position $X_0(\omega)$ is transported to position $X_{T_2}(\omega)$. In 2000, Benamou and Brenier \cite{benamou2000computational} introduced a continuous in time formulation of this problem. The dual formulation of the continuous in time problem has been extensively studied as an application of the Fenchel-Rockafellar theorem \cite[Theorem 1.9]{villani2021topics}.
\vspace{1.5mm}

\noindent
Extending the Monge-Kantorovich problem and its Benamou-Brenier formulation, Mikami and Thieullen  \cite{mikami2006duality}, and Tan and Touzi \cite{tan2013optimal} introduced the continuous time semi-martingale optimal transport problem and its dual formulation. Roughly speaking, this problem has the form \eqref{mot} when $T_1<T_0$ and when $\mathcal{P}_m$ from \eqref{mcondition} does not include the martingale condition for $X_t$. In this case $X_t$ might have a drift and, hence, satisfies $dX_t=\beta_tdt+\sigma_tdB_t$ for some adapted process $\beta_t$ with values in $\R$; $L$ might also depend on $\beta_t$ in a more general setup.
\vspace{1.5mm}

\noindent
The continuous in time martingale optimal transport problem \eqref{mot} or its discrete version (\eqref{ot} with the additional condition that $X$ is a discrete martingale) has many applications in finance, especially for questions of worst case bounds for derivative prices in model-independent finance, see e.g \cite{galichon2014stochastic}. 
%From a non-probabilistic point of view, it is stated aswhere $L: \text{Sym}_d^+\rightarrow \R$ is a given function. 
We note that, unlike the Monge-Kantorovich optimal transport problem and its Benamou-Brenier formulation, the connection between the continuous time martingale optimal transport problem and its discrete analogue
%\footnote{\eqref{ot} with the additional $\mathcal{L}^{\mathbb{P}}(X_{T_1})=\mu_1$ and the appropriate martingale constraint.} 
is still not well understood (see \cite{huesmann2019benamou}). However, \eqref{mot} still posesses a duality formulation, which is useful in approximating the optimal $\mathbb{P}$ (see e.g. \cite{guo2017local, guo2022joint} among others) and is the first result covered in this paper.

\vspace{1mm}

\subsection{Calibration via Martingale Schr\"odinger Bridges}
Martingale Schr\"odinger bridges were introduced by Labordère \cite{henry2019martingale} and originate from the classical Schr\"odinger bridge problem \cite{schrodinger1931umkehrung,schrodinger1932theorie}, which is related to minimizing the relative entropy\footnote{The relative entropy between two measures $\mu,\nu$ is defined as follows.
$H(\mu|\nu)=\begin{cases}
    \int \log\frac{d\mu}{d\nu}d\mu, & \text{if } \mu\ll \nu,\\
    +\infty, & \text{ otherwise.}
\end{cases}$} $H$ between two probability measures (we refer to the survey \cite{leonard2013survey}).

%The Schr\"odinger problem was first studied by Schr\"odinger \cite{schrodinger1931umkehrung,schrodinger1932theorie} and it involves minimizing the realitve entropy $H$ \footnote{The relative entropy between two measures $\mu,\nu$ is defined as follows.
%$H(\mu|\nu)=\begin{cases}
 %   \int \log\frac{d\mu}{d\nu}d\mu, & \text{if } \mu\ll \nu,\\
%    +\infty, & \text{ otherwise.}
%\end{cases}$}. Using the ideas developed on that problem (we refer to the survey \cite{leonard2013survey}) Labordère \cite{henry2019martingale} introduced the notion of martingale Schr\"odinger bridges. 

\begin{comment}
Let $T>0$, $\mathbb{P}_0$ be probability measure on a fixed probability space, $\sigma,b:\R\rightarrow \R$ be given functions, $\rho\in (-1,1)$ and $X_t,Y_t$ two stochastic processes (in the finance literature $X_t$ is often the price of the S\&P 500 index at time $t$ and $Y_t$ its instantaneous volatility) such that 
\be\label{SV}\begin{cases}
    dX_t=X_tY_tdW_t,\\
    dY_t=b(Y_t)dt+\sigma(Y_t)\bigg( \rho dW_t +\sqrt{1-\rho^2}\;dW_t^{\perp}\bigg),
\end{cases}\ee
with some initial condition $(X_{0},Y_{0})\in \R^2$, where $W,W^{\perp}$ are independent $\mathbb{P}_0$-Brownian motions. 
\end{comment}
\vspace{2mm}

\noindent
We consider $t_0\in [0,T]$ and \eqref{SVM} with initial condition $(X_{t_0},Y_{t_0})=(x,y)\in \R^2$, for some $x,y>0$.
%the main idea is that starting from a reference stochastic volatility model (SVM for short) $(X_t,Y_t)$ under $\mathbb{P}_0$, 
The martingale Schr\"odinger bridge is the measure that is closest to $\mathbb{P}_0$ in the sense of relative entropy, while the SVM under $\mathbb{P}$ will satisfy some given conditions including the martingality of $X_t$. In the notation we introduced at the beginning of the section, the martingale Schr\"odinger bridge problem that calibrates \eqref{SVM} is the minimization problem
\be\label{minimization}
D_{\mathbb{P}_0}:=\inf _{\mathbb{P}\in\mathcal{C}_{t_0}(\mu_1,\mu_2)}H(\mathbb{P}|\mathbb{P}_0),
\ee
where $H$ is the relative entropy between $\mathbb{P}$ and $\mathbb{P}_0$ and
\be\label{condition}
\mathcal{C}_{t_0}(\mu_1,\mu_2)=\begin{cases}
    \bigg\{\mathbb{P}\in \mathcal{P}\bigg| \mathcal{L}^{\mathbb{P}}(X_{T_i})=\mu_i,\; i=1,2\text{ and } X\text{ is an } ((\mathcal{F}_t),\mathbb{P})\text{-martingale}\bigg\},&\text{if }t_0\in [0,T_1),\\
    \bigg\{\mathbb{P}\in\mathcal{P}\bigg| \mathcal{L}^{\mathbb{P}}(X_{T_2})=\mu_2,\; X\text{ is an } ((\mathcal{F}_t),\mathbb{P})\text{-martingale}\bigg\},& \text{if }t_0\in [T_1,T_2].
    \end{cases}
\ee
%The main focus is on the minimization problem of the following form which corresponds to a martingale Schr\"odinger bridge problem

\begin{comment}
\be \label{sp}
\inf\left\{ H(\mathbb{P}|\mathbb{P}_0)\; : \mathbb{P}\text{ probability measure, }\; X_t \text{ is a } \mathbb{P}\text{-martingale},\; \mathcal{L}^{\mathbb{P}}(X_{T_i})=\mu_i,\;i=1,2\right\},
\ee
where $(X_t,Y_t)$ satisfies \eqref{SV} under $\mathbb{P}_0$ and $\mu_1,\mu_2$ are fixed probability distributions. 
\end{comment}
\vspace{1mm}

\noindent
Apart from calibrating SVM to SPX options \cite{henry2019martingale}, martingale Schr\"odinger bridges were also recently used to solve the longstanding \textit{joint S\&P 500 - VIX smile calibration problem} \cite{guyon2022dispersion,guyon2020joint} (we refer to these papers as well as their cited references for numerical simulations and different approaches to this problem). In this problem, a constraint on the prices of VIX options (VIX constraint) is included in \eqref{minimization} yielding the minimization problem
\be\label{minvix}
\tilde{D}_{\mathbb{P}_0}:=\inf_{\mathbb{P}\in \mathcal{C}_{t_0}(\mu_1,\mu_2), \; \text{VIX}} H(\mathbb{P}|\mathbb{P}_0),
\ee
where $\mathcal{C}_{t_0}(\mu_1,\mu_2)$ was defined in \eqref{condition}, $(X_t,Y_t)$ satisfy \eqref{SVM}. The VIX is a constraint on the law of
 \be\label{vix initial}
 V_{\mathbb{P}}=\mathbb{E}^{\mathbb{P}}\left[ -\log(X_{T_2})+\log(X_{T_1})|\mathcal{F}_{T_1}\right],\;\; \mathcal{L}^{\mathbb{P}}(V_{\mathbb{P}})=\mu_3,
 \ee
for some given probability distribution $\mu_3\in\mathcal{P}(\R)$ with support in $[0,+\infty)$. The minimizer of \eqref{minvix}, whenever it exists, is called {\it dispersion constrained martingale Schr\"odinger bridge}.
\vspace{1mm}

\begin{remark}\label{VIXrem}
The name VIX constraint originates from the VIX index: $\text{VIX}_{\mathbb{P}}=100\sqrt{\frac{2}{T_2-T_1}V_{\mathbb{P}}}$ (typically $T_2-T_1=30$ days), where $\mathbb{P}\in \mathcal{C}_0(\mu_1,\mu_2)$. The CBOE Volatility Index (VIX index) is a popular measure of the stock market's expectation of volatility based on SPX options. Clearly, since $-\log$ is convex and $X_t$ is a martingale, we have $V_{\mathbb{P}}\ge 0$. Therefore, knowing the distribution of $\text{VIX}_{\mathbb{P}}$, means that we also know the distribution of $V_{\mathbb{P}}$. This allows us to consider \eqref{vix initial} instead of a constraint on the law of $\text{VIX}_{\mathbb{P}}$ in the calibration with respect to VIX options.
\end{remark}

\noindent
In the above references, if we assume that there are no further constraints, the numerical construction of the martingale Schr\"odinger bridge or the dispersion constrained martingale Schr\"odinger bridge, as in martingale optimal transport, again relies on the dual formulations of the problems. The second aim of the current paper is to give a new proof for a duality formula for \eqref{minimization} and a proof for a duality formula for \eqref{minvix} when we consider a relaxed version of the VIX constraint \eqref{vix initial}.

\subsection{Organization of the paper} For the remainder of section 1 we present the main results of the current paper: Theorems \ref{main2}, \ref{main4}, \ref{main5}, and Corollary \ref{cor1}. In section 2, we give the notation and the assumptions we will be using. We also prove some preliminary results. The proofs of Theorem \ref{main2} and Corollary \ref{cor1} are given in section 3. In section 4, we prove Theorems \ref{main4} and \ref{main5}. In the appendix, we state some important results from the literature that are used in the proofs, we prove some technical propositions and we give a quick overview of the (relevant) theory of viscosity solutions for Hamilton-Jacobi equations with the presence of Dirac delta terms that appear in our results. We note that sections 3 and 4 can be read independently.

\subsection{Main results} Our main results in this paper consist of new proofs to duality formulas for martingale optimal transport or martingale Schr\"odinger bridges already existing in the literature or proofs of such duality formulas in case they were written down as formal statements. Numerical simulations demonstrating the use of these duality formulas for calibrating various stochastic volatility models are either beyond the scope of this paper or can be found in the references.
\vspace{3mm}

\noindent
To illustrate our techniques in a simpler format, we start with the model-independent problem \eqref{mot}. We have the following duality theorem.
\vspace{2mm}

\begin{theorem}\label{main2}
Suppose that (A1), (A2) are true. Let $\mu_0,\mu_1,\mu_2\in \mathcal{P}_2(\R)$ be in convex increasing order ($\mu_0\leq_c \mu_1\leq_c \mu_2$), $0<T_1<T_2$ and $T_0\in [0,T_2]$. Then, \eqref{mot} is admissible. Furthermore, whenever $V(T_0,\mu_0,\mu_1,\mu_2)$ is finite, the following duality formula holds
\be \label{duality}
V(T_0,\mu_0,\mu_1,\mu_2)=\sup_{u_1,u_2\in \text{Lip}}\bigg\{ \int u(T_0,x)d\mu_0(x) -\int u_2(x) d\mu_2(x) -{\bf 1}_{[0,T_1]}(T_0)\int u_1(x)d\mu_1(x)\bigg\},
\ee
where the supremum is taken over all bounded Lipschitz functions $u_1,u_2$, $u$ is a viscosity solution, in the sense of Definition \ref{defvisc}, of the Hamilton-Jacobi equation
\be\label{HJmain}
\begin{cases}
    -\partial_t u+H\left( \frac{D^2u}{2} \right)= \delta_{T_1}(t)u_1,& (t,x)\in [T_0,T_2]\times \R,\\
    u(T_2,x)=u_2(x), & x\in \R,
\end{cases}
\ee
and $H(a)=\sup_{b\ge 0}\{-ab-L(b)\}$, for any $a\in \R$, is the Hamiltonian.
Moreover, the same formula holds if the supremum in \eqref{duality} is taken over $u,u_1,u_2$ such that $u$ is a classical super-solution of \eqref{HJmain}.
\end{theorem}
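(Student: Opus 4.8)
The plan is to establish weak duality ($V$ dominates the right‑hand side of \eqref{duality}), then strong duality (the reverse), and finally to deduce the equality with the supremum over classical super‑solutions from the comparison principle together with a regularisation argument.

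\emph{Admissibility and weak duality.} Since $\mu_0\le_c\mu_1\le_c\mu_2$ lie in $\mathcal P_2(\R)$, a standard interpolation — a continuous Markov martingale realising the peacock $(\mu_0,\mu_1,\mu_2)$, which can be put in the form $dX_t=\sigma_t\,dB_t$ by patching suitable (possibly degenerate) diffusions on the subintervals; when $T_0\in[T_1,T_2]$ only $\mu_0\le_c\mu_2$ is used, which follows from the hypothesis — exhibits an element of $\mathcal P_m$, so \eqref{mot} is admissible. For the easy inequality I would fix $\mathbb P\in\mathcal P_m$ with $dX_t=\sigma_t\,dB_t$ and a dual triple $(u,u_1,u_2)$ and apply It\^o's formula to $u(t,X_t)$ on $[T_0,T_2]$: the jump of $u$ at $t=T_1$, after taking expectations and using $\mathcal L^{\mathbb P}(X_{T_1})=\mu_1$, contributes precisely $\mathbf 1_{[0,T_1]}(T_0)\int u_1\,d\mu_1$; off $t=T_1$ the finite‑variation part of $u(t,X_t)$ is $\big(\partial_t u+\tfrac12\sigma_t^2D^2u\big)\,dt$, which by the inequality characterising super‑solutions of \eqref{HJmain} (Definition~\ref{defvisc}) together with $H(\tfrac{D^2u}{2})\ge-\tfrac12\sigma_t^2D^2u-L(\sigma_t^2)$ — the admissible choice $b=\sigma_t^2$ in the definition of $H$ — dominates $-L(\sigma_t^2)\,dt$; the stochastic integral is a true martingale by the second‑moment bounds. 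Using $u(T_2,\cdot)=u_2$ with $\mathcal L^{\mathbb P}(X_{T_2})=\mu_2$ and $\mathcal L^{\mathbb P}(X_{T_0})=\mu_0$, rearranging gives the dual value $\le\mathbb E^{\mathbb P}\big[\int_{T_0}^{T_2}L(\sigma_t^2)\,dt\big]$; supremising over $(u,u_1,u_2)$ and infimising over $\mathbb P$ yields weak duality, and the bound persists for viscosity solutions by approximation.

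\emph{Reduction to a Fokker–Planck problem.} For the reverse inequality I would pass to a Benamou–Brenier-type reformulation. To $\mathbb P\in\mathcal P_m$ with $dX_t=\sigma_t\,dB_t$ assign $a_t(x):=\mathbb E^{\mathbb P}[\sigma_t^2\mid X_t=x]$ and $m_t:=\mathcal L^{\mathbb P}(X_t)$; then $(m_t)$ solves $\partial_t m_t=\tfrac12\partial_{xx}(a_tm_t)$ in the distributional sense with $m_{T_i}=\mu_i$, and by convexity of $L$ (from (A1)--(A2)) and Jensen, $\int_{T_0}^{T_2}\!\!\int L(a_t)\,dm_t\,dt\le\mathbb E^{\mathbb P}\big[\int_{T_0}^{T_2}L(\sigma_t^2)\,dt\big]$. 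Conversely, the superposition principle (Trevisan; cf.~\cite{huesmann2019benamou}) turns any nonnegative solution $(m,a)$ of this equation with the prescribed time‑slices and finite cost into an element of $\mathcal P_m$ of the same cost — the absence of a first‑order term in the generator forcing the superposed process to be a martingale. Hence $V$ equals the infimum of $\int_{T_0}^{T_2}\!\!\int L(a)\,dm_t\,dt$ over all such $(m,a)$, and, setting $w:=a\,m$, this is a convex programme with linear constraints: minimise the perspective integrand $mL(w/m)$ over nonnegative measures $(m,w)$ on $[T_0,T_2]\times\R$ subject to $\partial_t m=\tfrac12\partial_{xx}w$ and $m_{T_i}=\mu_i$.

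\emph{Convex duality and the Dirac term.} I would dualise using a test function $u=u(t,x)$ for the Fokker–Planck constraint and functions $u_0,u_1,u_2$ for the three time‑slices. Integrating by parts to move $\partial_t$ and $\partial_{xx}$ onto $u$, the \emph{interior}-time constraint $m_{T_1}=\mu_1$ enters through a multiplier concentrated on $\{T_1\}\times\R$ — equivalently, as the prescription that $u$ jump by $u(T_1^-,\cdot)-u(T_1^+,\cdot)=u_1$, which is the origin of the $\delta_{T_1}(t)u_1$ term in \eqref{HJmain} — while the slice at $T_2$ fixes $u(T_2,\cdot)=u_2$ and the slice at $T_0$ pairs $u(T_0,\cdot)$ with $\mu_0$. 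Minimising the Lagrangian over $(m,w)\ge0$ pointwise produces the Hamiltonian, $\inf_{b\ge0}\{L(b)+\tfrac{b}{2}D^2u\}=-H(\tfrac{D^2u}{2})$, and minimising over the mass of $m$ then forces, off $t=T_1$, exactly the super‑solution inequality for \eqref{HJmain} of Definition~\ref{defvisc}, the minimised value collapsing to $\int u(T_0,x)\,d\mu_0-\int u_2\,d\mu_2-\mathbf 1_{[0,T_1]}(T_0)\int u_1\,d\mu_1$. This identifies the convex dual with the right‑hand side of \eqref{duality} over super‑solutions of \eqref{HJmain}; the absence of a gap I would obtain from Fenchel–Rockafellar — or, in mean‑field‑game style, from Sion's minimax after pre‑compactifying the $m$-variable via the uniform second‑moment bound and Prokhorov — with admissibility and finiteness of $V$ furnishing a point where the relevant functional is finite and continuous, and a preliminary perturbation of $(\mu_0,\mu_1,\mu_2)$ to the strictly convex‑ordered case securing the constraint qualification. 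Finally, the comparison principle for \eqref{HJmain} lets one replace a near‑optimal classical super‑solution by the viscosity solution carrying the same $(u_1,u_2)$ without decreasing the objective, and a sup-/inf‑convolution plus mollification argument regularises in the opposite direction, giving the stated formula for both the viscosity‑solution class and the classical‑super‑solution class; Definition~\ref{defvisc} and the Dirac‑term viscosity theory of the appendix are what make these last two steps rigorous.

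\emph{Main obstacle.} I expect the heart of the argument to be the no‑gap step: fixing the correct topological pairing and lower‑semicontinuous convex integrand, extracting compactness of minimising sequences from the moment bounds, and, above all, verifying the constraint qualification — delicate precisely because the marginals lie on the boundary of the feasible set when the convex order is not strict — all while correctly tracking the interior‑time multiplier, whose time‑Dirac nature is exactly why the dual is a Hamilton–Jacobi equation with a $\delta_{T_1}$ source rather than a standard one. Making the mimicking/superposition equivalence rigorous under the available hypotheses — a measurable choice of $a_t(\cdot)$, existence of weak solutions, and applicability of the superposition principle with the moment bounds at hand — is the secondary technical hurdle.
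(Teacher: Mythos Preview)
Your outline is broadly correct and would lead to a valid proof, but the paper takes a different—and in one important respect simpler—route. You propose Fenchel--Rockafellar duality with all marginal constraints dualised at once, and you correctly identify constraint qualification as the main obstacle. The paper instead applies the Von-Neumann minimax theorem (Theorem~\ref{Von}): it introduces Lagrange multipliers $u_1,u_2$ for the time-slice constraints, checks concave--convexity of the resulting saddle functional, and obtains the required compactness of sublevel sets from the $p$-coercivity of $L$ in (A1) via an energy estimate on the Fokker--Planck pair $(m,w)$ (Proposition~\ref{comp}), not from second-moment bounds and Prokhorov. This sidesteps constraint qualification entirely—no perturbation to strict convex order is needed, which dissolves what you flag as the heart of the argument. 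A second structural difference: rather than dualising all constraints simultaneously, the paper first proves the two-marginal case (Theorem~\ref{main3}), identifies the inner infimum as a mean-field control problem which, by linearity in $m$, reduces to a pointwise stochastic control problem whose value function solves \eqref{HJBmfp} (Proposition~\ref{proof of HJBmfp}), and then handles the intermediate constraint at $T_1$ by splitting the time interval and iterating; the Dirac term in \eqref{HJmain} arises from gluing the two HJ problems rather than from a trace pairing. Your weak-duality computation and your sup-convolution/mollification argument for the classical super-solution formulation match the paper's (see \eqref{test101} and Lemma~\ref{approx}); the admissibility construction in the paper is more explicit (Strassen, Brenier maps, martingale representation, mimicking) than your peacock sketch but follows the same idea.
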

\vspace{1mm}

\noindent
It is worth noting that a duality formula for calibration via martingale optimal transport was also proved in \cite{guo2022calibration, guo2017local} in the case where the constraints for $X_{T_1},X_{T_2}$ are not on their law, but on their averages. Additionally, in the case $T_0>T_1$, i.e. the intermidiate constraint at time $T_1$ becomes irrelevant, our duality formula can also be found in \cite{huesmann2019benamou} for smooth supersolutions of the Hamilton-Jacobi equation and came as an application of the Fenchel-Rockafellar theorem. Our main idea is to use the Von-Neumann minimax principle (Theorem \ref{Von}), instead of the Fenchel-Rockafellar theorem, as well as ideas from mean field control theory.
\vspace{2mm}

\noindent
As a consequence of Theorem \ref{main2}, we have the following Corollary.

\begin{corollary} \label{cor1}
    Suppose that (A1) and (A2) hold and that $H$ is uniformly elliptic; that is, there exists a constant $\lambda>0$ such that $H'(a)\leq -\lambda $ for every $a\in \R$. Then, the supremum in \eqref{duality} is achieved at $u_1^*,u_2^*\in C^2_b$ if and only if the infimum in \eqref{mot} is achieved when the diffusion of $X_s$ is $\sigma_s=\left(-H'\left(\frac{D^2u(s,X_s)}{2}\right)\right)^{1/2}$, where $u$ satisfies \eqref{HJmain} in the classical sense of Definition \ref{defvisc} for the functions $u_1^*,u_2^*\in C^2_b$.
\end{corollary}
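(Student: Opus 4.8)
The plan is to run a verification argument coupling the primal problem \eqref{mot}, its dual \eqref{duality}, and the Fenchel--Young duality between $L$ and the Hamiltonian $H$, and then to read off the marginal constraints defining $\mathcal{P}_m$ (see \eqref{mcondition}) as the first-order optimality conditions of the dual maximization. Throughout, uniform ellipticity of $H$ guarantees that the viscosity solution of \eqref{HJmain} with $C^2_b$ data is classical, i.e.\ smooth off $\{t=T_1\}$ with the prescribed jump at $T_1$, by interior parabolic regularity.

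\textbf{The verification identity.} Fix $u_1,u_2\in C^2_b$ and let $u$ be the classical solution of \eqref{HJmain}. For any $\mathbb{P}\in\mathcal{P}_m$ one has $dX_t=\sigma_t\,dB_t$; I would apply It\^o's formula to $u(t,X_t)$ on $[T_0,T_2]$, accounting for the jump of $u$ across $t=T_1$ forced by the $\delta_{T_1}u_1$ term, use $-\partial_t u+H(D^2u/2)=0$ off $\{t=T_1\}$, the constraints $\mathcal{L}^{\mathbb{P}}(X_{T_i})=\mu_i$ (which make the $u_1$-contributions cancel), and the fact that $\int\partial_x u(t,X_t)\sigma_t\,dB_t$ is a true (not merely local) martingale, obtained by localization using the growth bounds in (A1), (A2), to reach
\be\label{cor-id}
\begin{split}
J(u_1,u_2):=&\int u(T_0,x)\,d\mu_0(x)-\int u_2\,d\mu_2-\mathbf{1}_{[0,T_1]}(T_0)\int u_1\,d\mu_1\\
=&-\mathbb{E}^{\mathbb{P}}\left[\int_{T_0}^{T_2}\Big(H\big(\tfrac{D^2u(t,X_t)}{2}\big)+\tfrac12 D^2u(t,X_t)\,\sigma_t^2\Big)dt\right].
\end{split}
\ee
By the Fenchel--Young inequality $H(a)+ab\ge-L(b)$ ($a\in\R$, $b\ge0$), \eqref{cor-id} yields $J(u_1,u_2)\le\mathbb{E}^{\mathbb{P}}[\int_{T_0}^{T_2}L(\sigma_t^2)dt]$, with equality iff $\sigma_t^2$ maximizes $b\mapsto-\tfrac{D^2u(t,X_t)}{2}b-L(b)$ for $dt\otimes d\mathbb{P}$-a.e.\ $(t,\omega)$; under uniform ellipticity and (A1), (A2) this map is strictly concave with unique interior maximizer $-H'(\tfrac{D^2u(t,X_t)}{2})\ge\lambda>0$ (envelope/Danskin theorem), so the equality case is exactly $\sigma_t=\big(-H'(D^2u(t,X_t)/2)\big)^{1/2}$.

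\textbf{The two implications.} For ``$\Leftarrow$'': if the infimum in \eqref{mot} is attained at $\mathbb{P}^*$ with $\sigma_s=\big(-H'(D^2u(s,X_s)/2)\big)^{1/2}$ and $u$ the classical solution with data $u_1^*,u_2^*\in C^2_b$, then $\mathbb{P}^*$ realizes the equality case, so $J(u_1^*,u_2^*)=\mathbb{E}^{\mathbb{P}^*}[\int_{T_0}^{T_2}L(\sigma_t^2)dt]=V(T_0,\mu_0,\mu_1,\mu_2)=\sup_{u_1,u_2}J$ by Theorem \ref{main2}, whence $(u_1^*,u_2^*)$ is a dual maximizer. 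For ``$\Rightarrow$'': given a dual maximizer $u_1^*,u_2^*\in C^2_b$ with classical solution $u$, I would let $\mathbb{P}^*$ be the law of the solution of $dX_s=\big(-H'(D^2u(s,X_s)/2)\big)^{1/2}dB_s$ on $[T_0,T_2]$ with $\mathcal{L}(X_{T_0})=\mu_0$ --- well posed since the diffusion coefficient is bounded, bounded below by $\sqrt\lambda$, and continuous off $\{t=T_1\}$ --- which is driftless with bounded diffusion, hence a true $\mathbb{P}^*$-martingale with $\mathcal{L}^{\mathbb{P}^*}(X_{T_0})=\mu_0$. The remaining marginals come from the first-order conditions: perturbing $u_2^*$ by $\phi\in C^2_b$, the classical solution perturbs to first order by $v$ solving the linearized, uniformly parabolic equation $-\partial_t v+\tfrac12 H'(D^2u/2)D^2v=0$ on $[T_0,T_2]$, $v(T_2,\cdot)=\phi$ (the $\delta_{T_1}$ term carries the unperturbed $u_1^*$, so $v$ does not jump at $T_1$); by Feynman--Kac, $v(T_0,x)=\mathbb{E}^{\mathbb{P}^*}[\phi(X_{T_2})\mid X_{T_0}=x]$, so the vanishing of the G\^ateaux derivative $\int v(T_0,\cdot)\,d\mu_0-\int\phi\,d\mu_2$ for all such $\phi$ forces $\mathcal{L}^{\mathbb{P}^*}(X_{T_2})=\mu_2$. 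Similarly, perturbing $u_1^*$ by $\psi$ yields $w$ vanishing on $[T_1,T_2]$, jumping by $\psi$ across $T_1$, and solving the same linear equation on $[T_0,T_1]$, so $w(T_0,\cdot)=\mathbf{1}_{[0,T_1]}(T_0)\,\mathbb{E}^{\mathbb{P}^*}[\psi(X_{T_1})\mid X_{T_0}=\cdot]$; the first-order condition then gives $\mathcal{L}^{\mathbb{P}^*}(X_{T_1})=\mu_1$ when $T_0\in[0,T_1)$, and is vacuous when $T_0\ge T_1$. Thus $\mathbb{P}^*\in\mathcal{P}_m$, and since $\sigma_t^2=-H'(D^2u(t,X_t)/2)$ by construction, \eqref{cor-id} with Theorem \ref{main2} gives $\mathbb{E}^{\mathbb{P}^*}[\int_{T_0}^{T_2}L(\sigma_t^2)dt]=J(u_1^*,u_2^*)=V(T_0,\mu_0,\mu_1,\mu_2)$, so $\mathbb{P}^*$ attains the infimum in \eqref{mot} with the stated diffusion.

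\textbf{Main obstacle.} The crux is the ``$\Rightarrow$'' part: rigorously establishing G\^ateaux-differentiability of $(u_1,u_2)\mapsto J(u_1,u_2)$ and linearizing the fully nonlinear equation \eqref{HJmain} in the presence of the distributional term $\delta_{T_1}u_1$. One must restrict to a class of perturbations $\phi,\psi$ for which the perturbed equation still admits a classical solution depending differentiably on the data --- which is exactly where uniform ellipticity ($\sigma^2\ge\lambda$) and interior parabolic Schauder / $W^{2,p}$ estimates enter --- and one must justify the Feynman--Kac representation of the linearized equation across the jump at $T_1$ together with well-posedness of the defining SDE (whose coefficient is discontinuous in $t$ only at $T_1$, handled by solving on $[T_0,T_1]$ and $[T_1,T_2]$ and patching). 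The identification of the maximizer of $b\mapsto-ab-L(b)$ with $-H'(a)$ via the envelope theorem, which relies on the strict convexity and smoothness of $L$ assumed in (A1), (A2), is what turns the equality analysis of \eqref{cor-id} into an ``if and only if''; the true-martingale property of the It\^o correction is routine but must be checked by localization.
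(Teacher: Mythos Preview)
Your verification identity \eqref{cor-id} and the ``$\Leftarrow$'' direction match the paper's argument exactly: test the classical PDE against the law of the process, invoke Fenchel--Young, and read off equality when $\sigma_t^2=-H'(D^2u/2)$.

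For ``$\Rightarrow$'', however, the paper takes a much shorter route than your first-order-condition program. Rather than \emph{constructing} $\mathbb{P}^*$ from the SDE and then verifying the marginal constraints via G\^ateaux differentiation of the dual functional, the paper simply uses that a primal minimizer $(b,m)$ already \emph{exists} (this is built into the Von--Neumann argument in the proof of Theorem~\ref{main2}; see Theorem~\ref{Von}, where ``the infimum is a minimum'' is part of the conclusion). For this optimal pair the marginal constraints $m(T_i)=\mu_i$ hold by definition of $\mathcal{P}_m$, so there is nothing to check. One then writes the same Fenchel--Young inequality, notes that for the optimal $(b,m)$ the two sides coincide (both equal $V$, since primal and dual optima match by Theorem~\ref{main2}), and concludes that the equality case forces $b=-H'(D^2u/2)$ pointwise. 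This completely bypasses the linearization of \eqref{HJmain}, the Feynman--Kac representation of the linearized problem, and---crucially---the G\^ateaux differentiability of the nonlinear solution map $(u_1,u_2)\mapsto u(T_0,\cdot)$, which you correctly flag as the main obstacle in your approach. Your route is in principle sound and has the appeal of being constructive, but the technical overhead you identify is entirely avoidable once one remembers that existence of the primal optimizer is already in hand.
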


\vspace{2mm}

\noindent
For calibration via martingale Schr\"odinger bridges our first aim is to give a new proof of the duality formula established in \cite[Theorem 3.5]{henry2019martingale}, again by employing the Von-Neumann minimax theorem. Before stating the result, we consider the generator of \eqref{SVM}
\be\label{gen}
\mathcal{L}^0_{x,y}=b(x,y)\partial_y +\frac{\sigma^2(x,y)}{2}\partial_{xx}+\frac{\tau_1^2(x,y)+\tau_2^2(x,y)}{2}\partial_{yy}+\sigma(x,y)\tau_1(x,y)\partial_{xy}.
\ee
\vspace{2mm}

\begin{theorem}\label{main4}
Let $\mu_1,\mu_2\in \mathcal{P}(\R)$. We assume (A3) and that \eqref{minimization} is admissible. Then, for $t_0\in [0,T_2]$ and $(X_t,Y_t)_{t\in [t_0,T_2]}$ satisfying \eqref{SVM} we have
\be\label{dual}
\inf_{\mathbb{P}\in\mathcal{C}_{t_0}(\mu_1,\mu_2)}H(\mathbb{P}|\mathbb{P}_0)=\sup_{u_1,u_2\in Lip}\bigg\{ -{\bf 1}_{[0,T_1]}(t_0)\int u_1d\mu_1-\int u_2d\mu_2+ u(t_0,X_{t_0},Y_{t_0})\bigg\},
    \ee
where the supremum is taken over all bounded and Lipschitz functions $u_1,u_2$, and $u$ is a viscosity solution 
%(in the sense of Definition \ref{defvisc} for the Hamiltonian $\mathcal{H}(t,x,y,u,Du,D^2(u))=\mathcal{L}^0_{x,y}u+\frac{1}{2}\tau_2^2(x,y)(\partial_yu)^2$) 
of the Hamilton-Jacobi equation
\be\label{HJS}
\begin{cases}
    -\partial_t u-\mathcal{L}^0_{x,y}u+\frac{1}{2}\tau_2^2(x,y)(\partial_y u)^2=\delta_{T_1}(t)u_1(x),\;t\in [0,T_2],\\
    u(T_2,x,y)=u_2(x),
\end{cases}\ee
in the sense of Definition \ref{defvisc}, where $\mathcal{L}^0$ is as in \eqref{gen}. Furthermore, if $\sigma\in C^{1,1}$ (e.g $\sigma(x,y)=xy$) and  the maximization problem \eqref{dual} admits an optimizer $u_1^*,u_2^*$, then the optimal $\mathbb{P}^*$ in \eqref{dual} is given by
\be\label{optimalP}
\frac{d\mathbb{P}^*}{d\mathbb{P}_0}\bigg|_{\mathcal{F}_{T_2}}=e^{-\int_{t_0}^{T_2}\tau_2(X_t,Y_t)\partial_y u^*(t,X_t,Y_t)dW_t^{\perp}-\frac{1}{2}\int_{t_0}^{T_2}\tau_2^2(X_t,Y_t)|\partial_y u^*(t,X_t,Y_t)|^2dt},
\ee
where $u^*$ the solution of \eqref{HJS} corresponding to $u_1^*,u_2^*$.
\end{theorem}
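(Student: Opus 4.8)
The plan is to mimic the strategy used for Theorem \ref{main2}, replacing the role of the cost functional by relative entropy and working with the generator $\mathcal{L}^0_{x,y}$ of the fixed SVM instead of a general diffusion. First I would rewrite the relative entropy $H(\mathbb{P}|\mathbb{P}_0)$ for $\mathbb{P}\in\mathcal{C}_{t_0}(\mu_1,\mu_2)$ using the Girsanov theorem: under (A3), any $\mathbb{P}$ equivalent to $\mathbb{P}_0$ under which $X$ stays a martingale with the prescribed volatility structure corresponds to a drift change only in the $W^{\perp}$ direction, say $dW^{\perp}_t = d\tilde W^{\perp}_t + \alpha_t\,dt$, and then $H(\mathbb{P}|\mathbb{P}_0) = \tfrac12\,\mathbb{E}^{\mathbb{P}}\big[\int_{t_0}^{T_2}\alpha_t^2\,dt\big]$. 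This casts \eqref{minimization} as a stochastic control problem over adapted controls $\alpha$, with the marginal constraints $\mathcal{L}^{\mathbb{P}}(X_{T_i})=\mu_i$ and the martingale constraint on $X$ (the latter being automatic once we only perturb in the orthogonal direction). Then I would introduce Lagrange multipliers $u_1,u_2$ (bounded Lipschitz) for the two marginal constraints, forming the Lagrangian
\be\nonumber
\mathcal{L}(\mathbb{P},u_1,u_2) = \tfrac12\,\mathbb{E}^{\mathbb{P}}\Big[\int_{t_0}^{T_2}\alpha_t^2\,dt\Big] - \mathbf{1}_{[0,T_1]}(t_0)\Big(\mathbb{E}^{\mathbb{P}}[u_1(X_{T_1})] - \int u_1\,d\mu_1\Big) - \Big(\mathbb{E}^{\mathbb{P}}[u_2(X_{T_2})] - \int u_2\,d\mu_2\Big),
\ee
and invoke the Von Neumann minimax principle (Theorem \ref{Von}) to exchange $\inf_{\mathbb{P}}$ and $\sup_{u_1,u_2}$, exactly as in the proof of Theorem \ref{main2}. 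Compactness/tightness of the relevant set of laws (admissibility of \eqref{minimization} plus the second-moment / convex-order structure) and convexity–concavity of the Lagrangian are what make the minimax applicable; these should transfer from the arguments already set up for Theorem \ref{main2}.

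Next, for fixed $u_1,u_2$ I would solve the inner minimization over $\mathbb{P}$, i.e. over the control $\alpha$. By dynamic programming, the value function $u(t,x,y)$ of this control problem — minimizing $\mathbb{E}[\int_t^{T_2}(\tfrac12\alpha_s^2\,ds)]$ with the Dirac running cost $-\delta_{T_1}(t)u_1(x)$ coming from the intermediate multiplier and terminal cost $u_2(x)$ — satisfies the HJB equation
\be\nonumber
-\partial_t u - \mathcal{L}^0_{x,y}u + \inf_{\alpha\in\R}\Big\{\tau_2(x,y)\,\alpha\,\partial_y u + \tfrac12\alpha^2\Big\} = \delta_{T_1}(t)u_1(x),
\ee
and the pointwise infimum is attained at $\alpha^* = -\tau_2(x,y)\partial_y u(t,x,y)$, giving $-\tfrac12\tau_2^2(\partial_y u)^2$, which is precisely \eqref{HJS}. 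The Dirac term at $t=T_1$ forces the jump condition $u(T_1^-,x,y)=u(T_1^+,x,y)+u_1(x)$, and the viscosity-solution framework of Definition \ref{defvisc} (developed in the appendix to handle exactly these Dirac-delta HJ equations) is what gives existence, uniqueness and the verification identity $\inf_{\mathbb{P}}\mathcal{L}(\mathbb{P},u_1,u_2) = u(t_0,X_{t_0},Y_{t_0}) + \mathbf{1}_{[0,T_1]}(t_0)\int u_1\,d\mu_1 + \int u_2\,d\mu_2$. Combining with the minimax exchange yields \eqref{dual}.

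Finally, for the representation \eqref{optimalP}: under the extra regularity $\sigma\in C^{1,1}$, a classical/verification solution $u^*$ of \eqref{HJS} exists for the optimizers $u_1^*,u_2^*$, and the optimal control is $\alpha^*_t = -\tau_2(X_t,Y_t)\partial_y u^*(t,X_t,Y_t)$; the associated measure change is the stochastic exponential of $-\int \tau_2\partial_y u^*\,dW^{\perp}$, which is exactly \eqref{optimalP}. One checks via Itô's formula applied to $u^*(t,X_t,Y_t)$ (using that $u^*$ solves \eqref{HJS}) that this $\mathbb{P}^*$ is admissible — it satisfies the marginal constraints because $u_1^*,u_2^*$ are optimal multipliers (complementary slackness) — and that it attains the infimum in $H(\mathbb{P}|\mathbb{P}_0)$. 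The main obstacle I anticipate is the same as for Theorem \ref{main2}: rigorously justifying the minimax exchange and the verification argument in the presence of the Dirac-$\delta_{T_1}$ term, i.e. making the dynamic-programming/viscosity argument work across the discontinuity at $t=T_1$ and proving that the value function $u$ inherits enough regularity (Lipschitz in $x,y$, the correct growth) for the measure-change and Itô arguments to be legitimate; a secondary difficulty is checking that the admissible set of $\mathbb{P}$'s has the compactness needed for the minimax theorem, which requires care because the martingale constraint on $X$ and the fixed (possibly degenerate when $x=0$, since $\sigma(x,y)=x\tilde\sigma(y)$) volatility interact with tightness.
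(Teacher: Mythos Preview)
Your approach is essentially the paper's: Girsanov reduces $H(\mathbb{P}|\mathbb{P}_0)$ to a quadratic control problem in the $W^\perp$-drift, Lagrange multipliers $u_1,u_2$ are introduced, Von--Neumann swaps inf and sup, and the inner problem is identified via HJB as \eqref{HJS}, with the optimal feedback $\alpha^*=-\tau_2\partial_y u^*$ yielding \eqref{optimalP}. The one concrete ingredient you gloss over is the passage from general adapted controls $\alpha_t$ to Markovian ones $\alpha(t,x,y)$: the paper invokes the mimicking theorem (Brunick--Shreve) so that the compactness needed for Theorem~\ref{Von} can be checked at the Fokker--Planck level via Proposition~\ref{comp2}, rather than directly on path-space measures; your appeal to ``arguments already set up for Theorem~\ref{main2}'' is not quite enough, since that theorem already started from a PDE formulation. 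Also note a sign slip in your displayed HJB: the infimum term should enter with a minus sign (equivalently, write $\sup_\alpha\{-\tau_2\alpha\partial_y u-\tfrac12\alpha^2\}$) to land on $+\tfrac12\tau_2^2(\partial_y u)^2$ as in \eqref{HJS}.
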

\vspace{2mm}

\noindent
Finally, our method allows us to provide a rigorous proof of a weaker version of the formal duality formula that appeared (formally) in \cite[p.~6]{guyon2022dispersion} and \cite[Theorem~4.6]{henry2019martingale}, which addresses the joint SPX–VIX calibration problem. In particular, we assume that the VIX constraint for the random variable $V_{\mathbb{P}}$ in \eqref{vix initial} takes the form 
\be\label{VIX'}
\text{VIX'}:=\left\{\mathbb{P}\in\mathcal{P}\bigg| \mathcal{L}^{\mathbb{P}}(V_{\mathbb{P}})\leq_{c,l}\mu_3\right\}
\ee
(instead of equality)\footnote{We write $\mu\leq_{c,l}\nu$, if $\int h(x)d\mu(x)\leq \int h(x)d\nu(x)$ for all convex and lower bounded $h:[0,+\infty)\rightarrow \R$. For example, $h(x)=(x-K)_+$, for any $K\in \R.$}. We name this convex order constraint \textit{VIX' constraint} and we set 
$$D'_{\mathbb{P}_0}=\inf_{\mathcal{C}_{t_0}(\mu_1,\mu_2),\;\text{VIX'}}H(\mathbb{P}|\mathbb{P}_0).$$
\vspace{1mm}

\noindent
To state our result we consider the operator
\be\label{gen2}
\mathcal{L}^0_{w,y}=b(e^w,y)\partial_y-\frac{\tilde{\sigma}^2(y)}{2}\partial_w+\frac{\tilde{\sigma}^2(y)}{2}\partial_{ww}+\frac{\tau_1^2(e^w,y)+\tau_2^2(e^w,y)}{2}\partial_{yy}+\tilde{\sigma}(y)\tau_1(e^w,y)\partial_{wy}.
\ee
This is the generator of the process $(W_t,Y_t)$, where $W_t:=\log(X_t)$ and $(X_t,Y_t)$ satisfies \eqref{SVM}. This change of variable is possible due to our assumption that $X_{t_0}> 0$ and $\sigma(x,y)=x\tilde{\sigma}(y)$, which implies that $X_t> 0$ almost surely.

\begin{comment}
for $\mathbb{P}$ a probability measure over $(\Omega,\mathcal{F},\mathbb{F})$, the random variable
\be\label{VIX}
V_{\mathbb{P}}'=\mathbb{E}^{\mathbb{P}}\left[G(X_{T_2})-G(X_{T_1})\bigg| \mathcal{F}_{T_1}\right].
\ee
and the VIX$(G)$ constraint $\mathcal{L}^{\mathbb{P}}(V'_{\mathbb{P}})\leq_c\mu_3$. We prove the following theorem for \eqref{minvix} with VIX$(G)$ instead of \eqref{vix initial}.
\end{comment}

\begin{theorem}\label{main5}
Assume (A3) and (A4). Suppose $(X_t,Y_t)$ has the dynamics \eqref{SVM} with initial condition $X_{t_0}>0$ and $Y_{t_0}>0$ and that $\mu_1,\mu_2,\mu_3\in \mathcal{P}(\R)$ such that $\mu_1\le_c\mu_2$ and $\int_0^{+\infty}(\log(x))^2d(\mu_1+\mu_2)(x)<\infty$. We further assume that \eqref{minvix} with VIX' constraint is admissible.
%and that $\sigma, b$ are $C^2$ Lipschitz and $\sigma$ is bounded. 
%with its derivative satisfying $|\sigma'|\leq\lambda_0|\sigma|$ for some $\lambda_0>0$. 
For any $\delta\in \R$, there exists a unique viscosity solution $u$ of the state constraint problem
\be\label{HJvix'}
\begin{cases}
    -\partial_tu-\mathcal{L}^0_{x,y}u+\frac{1}{2}\tau_2^2(x,y)(\partial_yu)^2=0,& \delta\in \R,\;t\in(T_1,T_2],\;(x,y)\in (0,+\infty) \times\R,\\
    u(T_2,x,y)=u_2(x)-\delta \log(x), &(x,y)\in (0,+\infty)\times\R,\\
    u(t,0,y)=\text{sign}(\delta)(+\infty), & t\in (T_1,T_2],\; y\in \R.
\end{cases}\ee
such that $|u(t,x,y)|\leq C(1+(\log(x))^2)$ for all $(t,x,y)\in [T_1,T_2]\times (0,+\infty)\times \R$.\\
Let $\Phi(x,y)=\inf_{v\geq 0}\sup_{\delta\in \R}\left\{ u_3(v)-\delta (G(x)+v) +u(T_1^+,x,y;\delta)\right\}$, for $x>0$ and $y\in \R$. Then, $\Phi$ is bounded and, for $t_0<T_1$, we have
\be\label{dual101}
D'_{\mathbb{P}_0}=\inf_{\mathcal{C}_{t_0}(\mu_1,\mu_2), \text{VIX'}}H(\mathbb{P}|\mathbb{P}_0)=\sup_{u_1,u_2,u_3}\bigg\{-\int u_1d\mu_1-\int u_2d\mu_2-\int u_3d\mu_3+u(t_0,X_{t_0},Y_{t_0})\bigg\},
\ee
where the supremum is taken over all bounded and Lipschitz functions $u_1$, $u_2$, and $u_3$ convex, and lower bounded defined on $[0,+\infty)$. In addition, $u(t,x,y)$, $t\in [t_0,T_1]$, $x>0$, $y\in \R$ is the value function of the optimal control problem
\be\label{control}
u(t,x,y):=\inf_{\alpha\in L^2(dm_s\otimes ds)}\bigg\{ \mathbb{E}\left[ \int_{t}^{T_1}\frac{|\alpha(s,X_s,Y_s)|^2}{2}ds+u_1(X_{T_1})+\Phi(X_{T_1},Y_{T_1})\right]\bigg\},
\ee
where $m_s$ is the distribution of the pair $(X_s,Y_s)$ satisfying for $s\in[t,T_1]$
\be \nonumber
\begin{cases}
dX_s=\sigma(X_s,Y_s)dW_s,\\
dY_s=\left( b(X_s,Y_s)+ \tau_2(X_s,Y_s)\alpha(s,X_s,Y_s)\right)ds+\tau_1(X_s,Y_s) dW_s+\tau_2(X_s,Y_s)dW_s^{\perp},\\
X_{t}=x,\;\;Y_{t}=y.
\end{cases}\ee
Furthermore, if $\tau_2$ is assumed to be constant, then $\Phi$ is continuous and $u$ from \eqref{control} can be characterized as follows: for any $t\in [t_0,T_1]$, $x>0$ and $y\in\R$,
\be\label{character}
u(t,x,y)=v(t,\log(x),y),
\ee
where $v:[t_0,T_1]\times\R^2\rightarrow \R$ is the unique viscosity solution of
\be\label{HJvix}
\begin{cases}
   -\partial_tv-\mathcal{L}^0_{w,y}v+\frac{1}{2}\tau_2^2(e^w,y)(\partial_y v)^2=0, &t\in [t_0,T_1],\; (w,y)\in \R^2,\\
   v(T_1,w,y)=u_1(e^w)+\Phi(e^w,y),& (w,y)\in \R^2.\\
\end{cases}\ee
Here, $\mathcal{L}^0_{x,y},\; \mathcal{L}^0_{w,y}$ are given in \eqref{gen}, \eqref{gen2}, respectively.
\end{theorem}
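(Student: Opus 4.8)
The plan is to run the same control-reformulation-plus-minimax scheme as in the proof of Theorem \ref{main4}, with one extra layer that linearizes the (a priori nonlinear) $\mathrm{VIX}'$ constraint. \emph{Step 1 (control reformulation).} By Girsanov's theorem, any $\mathbb{P}\ll\mathbb{P}_0$ with $H(\mathbb{P}|\mathbb{P}_0)<\infty$ under which $X$ is a martingale is obtained from $\mathbb{P}_0$ by a drift acting only in the $W^\perp$ direction — a drift in the $W$ direction is incompatible with $X$ being a $\mathbb{P}$-martingale — and writing this drift as $\tau_2(X_t,Y_t)\alpha_t$ one gets $H(\mathbb{P}|\mathbb{P}_0)=\tfrac12\mathbb{E}\int_{t_0}^{T_2}\alpha_t^2\,dt$ together with the controlled dynamics for $(X,Y)$ displayed right after \eqref{control}. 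Arguing as in the proof of Theorem \ref{main4} it suffices to consider Markovian controls $\alpha=\alpha(t,X_t,Y_t)$, so that $(X,Y)$ is Markov and $V_{\mathbb{P}}=g(X_{T_1},Y_{T_1})-G(X_{T_1})$ is a genuine function of $(X_{T_1},Y_{T_1})$, with $g(x,y)=\mathbb{E}[G(X_{T_2})\mid X_{T_1}=x,Y_{T_1}=y]$ and $G=-\log$. Thus $D'_{\mathbb{P}_0}=\inf_{\alpha}\tfrac12\mathbb{E}\int_{t_0}^{T_2}\alpha_t^2\,dt$ subject to $\mathcal{L}^{\mathbb{P}}(X_{T_1})=\mu_1$, $\mathcal{L}^{\mathbb{P}}(X_{T_2})=\mu_2$ and $\mathcal{L}^{\mathbb{P}}(V_{\mathbb{P}})\leq_{c,l}\mu_3$, using the standing assumptions (A3), (A4) for the well-posedness of the state dynamics and the needed moment bounds.

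\emph{Step 2 (duality).} Split the horizon at $T_1$ and dualize the three constraints with a bounded Lipschitz $u_1$ for $\mathcal{L}^{\mathbb{P}}(X_{T_1})=\mu_1$, a bounded Lipschitz $u_2$ for $\mathcal{L}^{\mathbb{P}}(X_{T_2})=\mu_2$, and a convex, lower bounded $u_3$ for the $\mathrm{VIX}'$ constraint via $\mathbf{1}_{\mathrm{VIX}'}(\mathbb{P})=\sup_{u_3}\{\mathbb{E}^{\mathbb{P}}[u_3(V_{\mathbb{P}})]-\int u_3\,d\mu_3\}$. After dualization, the contribution of the controls on $[T_1,T_2]$ decouples over the starting fibre $(x,y)$ into $\inf_{\mathbb{P}_{x,y}}\{\tfrac12\mathbb{E}\int_{T_1}^{T_2}\alpha^2+\mathbb{E}^{\mathbb{P}_{x,y}}[u_2(X_{T_2})]+u_3(\mathbb{E}^{\mathbb{P}_{x,y}}[G(X_{T_2})]-G(x))\}$, which is \emph{convex} in $\mathbb{P}_{x,y}$ because $u_3$ is convex and $\mathbb{P}_{x,y}\mapsto\mathbb{E}^{\mathbb{P}_{x,y}}[G(X_{T_2})]$ is linear. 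I would linearize it by inserting the scalar constraint $\mathbb{E}^{\mathbb{P}_{x,y}}[G(X_{T_2})]-G(x)=v$, $v\ge0$, with multiplier $\delta$; by scalar strong duality the inner infimum equals $\sup_\delta\{-\delta(v+G(x))+u(T_1^+,x,y;\delta)\}$, where $u(\cdot,\cdot,\cdot;\delta)$ is the value of the entropic control problem on $[T_1,T_2]$ with terminal cost $u_2(x)-\delta\log x$, i.e.\ the solution of \eqref{HJvix'}, and taking $\inf_{v\ge0}$ produces exactly $\Phi(x,y)$. What remains is the control problem on $[t_0,T_1]$ with terminal cost $u_1(X_{T_1})+\Phi(X_{T_1},Y_{T_1})$, whose value is $u(t_0,X_{t_0},Y_{t_0})$ in \eqref{control}; collecting the additive terms $-\int u_i\,d\mu_i$ yields the right-hand side of \eqref{dual101}. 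Weak duality ($\mathrm{LHS}\ge\mathrm{RHS}$) follows by applying It\^o's formula to $u(\cdot;\delta)$ along an arbitrary admissible $\mathbb{P}$, using that $u(\cdot;\delta)$ is a viscosity supersolution and then Jensen's inequality for the convex $u_3$ together with $\mathcal{L}^{\mathbb{P}}(V_{\mathbb{P}})\leq_{c,l}\mu_3$; the bound $|u|\le C(1+(\log x)^2)$ and $\int(\log x)^2\,d(\mu_1+\mu_2)<\infty$ guarantee all expectations are finite. For strong duality I would restrict the infimum to the convex, weakly compact sublevel set $\{\mathbb{P}:H(\mathbb{P}|\mathbb{P}_0)\le D'_{\mathbb{P}_0}+1\}$ and apply the Von Neumann minimax theorem (Theorem \ref{Von}) to the Lagrangian, which is convex in $\mathbb{P}$ and affine in $(u_1,u_2,u_3)$.

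\emph{Step 3 (prerequisites).} Two facts recorded in the statement must be proved first. For the well-posedness of \eqref{HJvix'}: the substitution $w=\log x$ turns the singular lateral condition $u(t,0,y)=\mathrm{sign}(\delta)(+\infty)$ into a one-sided growth condition as $w\to-\infty$ and the terminal datum into $u_2(e^w)-\delta w$; I would then prove a comparison principle in the class of functions with at most quadratic growth in $w$ (equivalently in $\log x$), using the viscosity machinery recalled in the appendix, and obtain existence by Perron's method, with uniqueness following from comparison. For the boundedness of $\Phi$: the upper bound comes from testing $\inf_{v\ge0}$ at the value of $V$ produced by the uncontrolled dynamics, using $\|u_1\|_\infty,\|u_2\|_\infty<\infty$ and $\sup_\delta\{-\delta(v+G(x))+u(T_1^+,x,y;\delta)\}\le$ (cost of the null control); the lower bound uses $V_{\mathbb{P}}\ge0$ (Jensen for $-\log$ and the martingale property of $X$), the lower bound on $u_3$, and a coercivity estimate in $\delta$ in which the quadratic-in-$\log x$ bound on $u(T_1^+,\cdot,\cdot;\delta)$, jointly in $x$ and $\delta$, forces the supremum over $\delta$ to be attained on a compact set.

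\emph{Step 4 (control characterization and the case $\tau_2$ constant).} That $u$ on $[t_0,T_1]$ is the value function \eqref{control} is dynamic programming, and its Hamilton--Jacobi equation is $-\partial_tu-\mathcal{L}^0_{x,y}u+\tfrac12\tau_2^2(\partial_yu)^2=0$ with terminal datum $u_1+\Phi$, obtained by minimizing $\tfrac12\alpha^2+\tau_2\alpha\,\partial_yu$ over $\alpha$. When $\tau_2$ is constant, $u(T_1^+,x,y;\delta)$ depends on $x$ only through $\log x$ and is jointly continuous in $(x,y,\delta)$ — seen, for instance, from a Hopf--Cole/Girsanov representation of the entropic problem on $[T_1,T_2]$ — so $\Phi$, being an $\inf$--$\sup$ of a jointly continuous coercive family, is continuous; then \eqref{HJvix} has continuous terminal datum $u_1(e^w)+\Phi(e^w,y)$ and falls under the standard comparison principle, and \eqref{character} is the change of variables $w=\log x$ in \eqref{control}. \textbf{The main obstacle} is making the linearization-plus-minimax of Step 2 rigorous: one must simultaneously justify the reduction to Markovian controls so that $V_{\mathbb{P}}$ is a function of $(X_{T_1},Y_{T_1})$, select the multiplier $\delta=\delta(x,y)$ measurably, and verify the compactness and convexity hypotheses of the minimax theorem in the presence of the unbounded entropy functional and the singular terminal cost $-\delta\log x$ — a combination that does not fit directly into either Fenchel--Rockafellar or the usual forms of Sion's theorem, and which is precisely why the convex-order relaxation $\mathrm{VIX}'$ of the VIX constraint is needed.
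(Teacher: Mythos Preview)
Your overall strategy---Girsanov reformulation, Lagrangian dualization, splitting at $T_1$, and $\delta$-linearization to produce $\Phi$---matches the paper. The gap is precisely the one you flag at the end: reducing to Markovian controls so that $V_{\mathbb{P}}$ becomes a function of $(X_{T_1},Y_{T_1})$ is not justified \emph{a priori}, because mimicking preserves one-time marginals but not conditional expectations such as $V_{\mathbb{P}}=\mathbb{E}^{\mathbb{P}}[W|\mathcal{F}_{T_1}]$. The fibrewise decoupling in your Step~2 therefore does not hold for general admissible $\mathbb{P}$, and you cannot apply Theorem~\ref{Von} to the Lagrangian you describe.

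The paper circumvents this by a different order of operations. It works first in the density variable $f=d\mathbb{P}/d\mathbb{P}_0$ and writes the VIX term as $\int u_3\!\bigl(\mathbb{E}^{\mathbb{P}_0}[Wf|\mathcal{F}_{T_1}]\big/\mathbb{E}^{\mathbb{P}_0}[f|\mathcal{F}_{T_1}]\bigr)\,\mathbb{E}^{\mathbb{P}_0}[f|\mathcal{F}_{T_1}]\,d\mathbb{P}_0$, which is a perspective of the convex $u_3$ and hence jointly convex and weakly-$L^1$ lower semicontinuous in $f$ (cf.\ Proposition~\ref{martingality}); together with Dunford--Pettis this gives the hypotheses of Theorem~\ref{Von} directly on $(f,(u_1,u_2,u_3))$, with no Markov reduction. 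Only \emph{after} swapping $\inf$ and $\sup$ does the paper linearize the $u_3$ term, and it does so with $\mathcal{F}_{T_1}$-measurable \emph{random variables} $V,\Delta$ rather than scalars: $\mathbb{E}^{\mathbb{P}}[u_3(V_{\mathbb{P}})]=\inf_{V\in\mathcal{F}_{T_1}}\sup_{\Delta\in L^\infty(\mathcal{F}_{T_1})}\mathbb{E}^{\mathbb{P}}[u_3(|V|)+\Delta(W-|V|)]$. A second minimax reorders to $\sup_{u_i}\inf_V\sup_\Delta\inf_{\mathbb{P}}$; now, with $\Delta$ fixed, the innermost cost is linear in the marginals of $(X,Y)$ and Girsanov plus mimicking is finally legitimate. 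Your measurable-selection concern---passing from $\sup_{\Delta\in L^\infty(\mathcal{F}_{T_1})}\mathbb{E}[\cdots]$ and $\inf_{V\in\mathcal{F}_{T_1}}\mathbb{E}[\cdots]$ to $\mathbb{E}[\sup_{\delta\in\R}\cdots]$ and $\mathbb{E}[\inf_{v\ge0}\cdots]$---is handled by the Rockafellar-type interchange of Proposition~\ref{rockafellar}, using the continuity in $\delta$ of $u(T_1,x,y;\delta)$ proved in Proposition~\ref{deltacont}.

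Two smaller discrepancies. Uniqueness for \eqref{HJvix'} in the paper is not obtained via Perron and a bespoke comparison principle: it comes from the change of variables $w=\log x$ followed by an appeal to the quadratic-growth comparison result of \cite{da2006uniqueness}. For continuity of $\Phi$ when $\tau_2$ is constant, the paper does not use a Hopf--Cole representation; it establishes a one-sided Lipschitz-type estimate for $u(T_1,\cdot,\cdot;\delta)$ (Lemma~\ref{continuity estimates lemma}) by a Gr\"onwall argument on the controlled SDE---constancy of $\tau_2$ is used exactly here, to keep the control $\alpha$ from entering the stability constant---and then deduces continuity of the Legendre-type transform $\Phi$ from that bound via Lemma~\ref{infcont}.
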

\vspace{1mm}

\begin{remark}
(i) We note that, due to the dynamics \eqref{SVM}, the assumption $\sigma(0,y)=0$ for any $y\in \R$ and the initial condition $X_{t_0}>0$, we have $X_t> 0$ almost surely for any $t\geq t_0$. This allows us to write the formula \eqref{vix initial} and equations \eqref{HJvix'}, \eqref{HJvix}, \eqref{character}.
\vspace{1mm}

\noindent
(ii) In \cite[p.6]{guyon2022dispersion}, the above duality formula \eqref{dual101}, was written formally in the case where $\sigma(x,y)=xy$, $\tau_1(x,y)=\rho \sigma'(y)$ and $\tau_2(x,y)=\sqrt{1-\rho^2}\sigma'(y)$ for a given $\sigma':\R\rightarrow \R$. In addition, instead of the VIX' constraint, which is a constraint on the convex order, the VIX \eqref{vix initial} was considered. Our Theorem \ref{main5} gives a proof of this duality formula.
\vspace{1mm}

\noindent
(iii) In Proposition \ref{martingality} we show that $D'_{\mathbb{P}_0}$ admits a minimizer $\mathbb{P}^*$. If the solutions of \eqref{HJvix'} and \eqref{HJvix} satisfy appropriate regularity assumptions, then we may write a formula for $\frac{d\mathbb{P}^*}{d\mathbb{P}_0}$ as in Theorem \ref{main4}.\\
(iv) Our assumptions on the coefficients of \eqref{SVM} allow us to apply our results from Theorem \ref{main4} and \ref{main5} to a plethora of well-known stochastic volatility models that appear in the literature. Two examples are the following, where we write $\text{Id}(y):=y$ for the identity function.
\begin{enumerate}
    \item SABR model: $\sigma(x,y)=xe^{\text{Id}(y)}$, $\tau_1=0$, $\tau_2=$constant, and $b(x,y)=-\frac{\tau^2_2}{2}$.
    \item Heston model: $\sigma(x,y)=x\sqrt{\text{Id}(y)}$, $b(x,y)=\kappa(\theta-y)$, $\tau_1(x,y)=\rho\xi\sqrt{\text{Id}(y)}$ and $\tau_2(x,y)=\xi\sqrt{1-\rho^2}\sqrt{\text{Id}(y)}$, where $\rho\in (-1,1)$ and $\kappa,\;\theta,\; \xi$ are given constants.
\end{enumerate}
However, due to our assumption of $\tilde{\sigma}$ being bounded or Lipschitz and $\tau_1,\tau_2$ being Lipschitz, we have to consider truncations of the identity function; depending on the case. In this case, our results become more appropriate for short maturities $T_1,\;T_2$.
\vspace{1mm}

\noindent
(v) The continuity of function $\Phi$ when $\tau_2$ is constant is proved in Appendix B (Lemma \ref{continuity estimates lemma}). Note that it is possible to show that $\Phi$ is lower semi-continuous (first part of step 2 in the Lemma \ref{continuity estimates lemma}) even if $\tau_2$ is non-constant. The assumption that $\tau_2$ is constant is essential in the proof of the upper semi-continuity and in particular, in the proof of estimate \eqref{"Lip"estimate}.
\vspace{1mm}

\noindent
(vi) Our VIX' constraint \eqref{VIX'} implies the following bound for VIX put option prices
\be\label{optionpricebound}
\mathbb{E}^{\mathbb{P}}[(K-\text{VIX}_{\mathbb{P}})_+]\leq \int\left(K-100\sqrt{\frac{2}{T_2-T_1}x}\right)_+d\mu_3(x),
\ee
where $K\in [0,+\infty)$ is a given strike price (see Proposition \ref{VIXoptionprice}). An interesting question for future investigation is whether our proof technique can be adapted to incorporate the VIX constraint \eqref{vix initial} in \eqref{minimization}, rather than only the weaker VIX' constraint \eqref{VIX'}, making \eqref{optionpricebound} an equality. The main technical difficulty is the application of a minimax theorem in \eqref{equation1001} in the proof of Theorem \ref{main5}.

%when $G$ is $G_n(x)=\max\{ -\log\left(x+\frac{1}{n}\right),-n\},\; n\in \mathbb{N}$. Since $G_n\xrightarrow{n\rightarrow \infty}G$, our result is an approximation of the formal duality statement \cite[p.6]{guyon2022dispersion}.
%The joint SPX and VIX calibration problem was first approached via martingale Schr\"odinger bridges by Guyon. In \cite{guyon2024dispersion}, a duality formula for \eqref{minvix} is proved when $G(x)=-\frac{2}{\tau}\log(x),\; x>0$ for some $\tau>0$, in the discrete time case. In continuous time, a duality statement was made in \cite[p. 6]{guyon2022dispersion}, but its justification was only at a formal level. Our Theorem \ref{main5} gives a proof of this duality formula when $G_n(x)=\max\{ -\log\left(x+\frac{1}{n}\right),-n\},\; n\in \mathbb{N}$. Since $G_n\xrightarrow{n\rightarrow \infty}G$, our result is an approximation of the formal duality formula \cite[p. 6]{guyon2022dispersion}.
\end{remark}

%\noindent
%We finish with

\section{Notation, Assumptions and Preliminary results}

\subsection{Notation}
Throughout the note, $d\in\mathbb{N}$ is a given dimension,  $T,T_1,T_2>0$ with $T_1<T_2$. $\R_{\ge 0}= [0,+\infty)$.
%We denote by $\text{Sym}_d$ (resp. $\text{Sym}_d^+$) the space of $d\times d$ symmetric (resp. nonnegative) matrices with inner product $\langle a,b\rangle =\text{tr}(ab)$ and norm $|a|=\sqrt{\text{tr}(a^2)}$. 
%$\text{Sym}_d^+=L^2(dm_sds,S_d^+)$ denotes the $dm_sds$ square integrable $d\times d$ positive matrix valued functions. 
$\gamma^{d}$ is the $d-$dimensional standard normal distribution in $\R^d$. For $k\geq 0$, $C^k_b(\R^d)$ is the space of all $k$-continuously differentiable functions with bounded derivatives. We write $g\in \text{Lip}$ if $g$ is Lipschitz and bounded function. For a function $u=u(t,x):[0,T]\times \R\rightarrow \R$ depending on time $t$ and space $x$, we use the symbol $Du$ for the partial derivative $D_xu$.

\noindent
We use the symbol $\mathcal{P}(\R)=\mathcal{P}_2(\R)$ for all probability measures with finite second moments. We denote by ${\bf d}_2(\mu,\nu)=\inf_{X\sim \mu, Y\sim\nu}\left(\mathbb{E}[|X-Y|^2]\right)^{1/2}$ the Wasserstein 2-distance between two measures $\mu,\nu\in \mathcal{P}(\R)$. If $f:\R\rightarrow \R$ is a function, then $f_\#\mu\in \mathcal{P}(\R)$ is the measure such that $f_\#\mu(A)=\mu(f^{-1}(A))$ for every measurable $A\subset \R$. \\
We say that two probability measures $\mu,\nu\in\mathcal{P}(\R)$ are in convex order and we write $\mu \leq_c \nu$ if $\int h(x)d\mu(x)\leq \int h(x)d\nu(x)$ for every convex  function $h:\R\rightarrow \R$.\footnote{It is straightforward to show that if a flow of probability measures $(m_t)_{t\in[0,T]}$ satisfies $\partial_t m=\frac{1}{2}\partial_{xx}(m b)$, for some $b: [0,T]\times \R\rightarrow \R_{\ge 0}$ then $t\mapsto m_t$ is increasing with respect to the convex order.} We also write $\mu\leq_{c,l}\nu$ if $\int h(x)d\mu(x)\leq \int h(x)d\nu(x)$ for any convex and lower bounded $h:[0,+\infty)\rightarrow \R$.\\
The relative entropy between two measures $\mu,\nu$ is defined as follows.
$$H(\mu|\nu)=\begin{cases}
    \int \log\frac{d\mu}{d\nu}d\mu, & \text{if } \mu\ll \nu,\\
    +\infty, & \text{ otherwise.}
\end{cases}$$
When $X$ is a random variable over a probability space $(\Omega,\mathbb{P})$ with distribution $\mu$ we write $X\stackrel{\mathbb{P}}{\sim} \mu$ or $\mathcal{L}^{\mathbb{P}}$ or simply $X\sim \mu$, $\mathcal{L}(X)=\mu$, respectively, when there is no confusion about $\mathbb{P}$.\\
If $(\Omega,\mathcal{F},\mathbb{P})$ is a probability space, we sometimes abuse the notation and we write $V\in \mathcal{F}$ if $V$ is an $\mathcal{F}$-measurable function. We also use the notation $\mathbb{E}^{\mathbb{P}}$ for the expectation with respect to the probability measure $\mathbb{P}$. $\mathcal{P}$ is the space of probability measures over $(\Omega,\mathcal{F},\mathbb{F})$.\\
$\ind_A$ is the indicator function of a given set $A$.

\subsection{Assumptions}
We make the following assumptions.
\vspace{1.5mm}

\noindent
(A1) $L:\R\rightarrow \R$ is convex and $p-$coercive for some $p\in [2,+\infty)$.
%and it has growth of order at most $q\in [2,+\infty)$. 
That is, there exists a constant $C$ such that $L(a)\geq C|a|^p$ for every $a\in \R$.
\vspace{2mm}

\noindent
(A2) $H: \R\rightarrow \R$ with $H(a)=\sup_{b\ge 0}\{-ab-L(b)\}$ is $C^1$. 
%and uniformly elliptic, that there exists a number $\lambda>0$ such that for every $a\in S_d$ the derivative $-H_a(a)\geq\lambda \mathbb{I}_d$
\vspace{2mm}

\noindent
(A3) The function $\sigma:\R^2\rightarrow \R$ has the form $\sigma(x,y)=x\tilde{\sigma}(y)$ for some Lipschitz function $\tilde{\sigma}:\R\rightarrow \R$. The functions $b,\tau_1,\tau_2: \R^2\rightarrow \R$ are $C^2$ and Lipschitz functions. $\tau_2$ satisfies 
$$|Db(x,y)|+|D\tau_2(x,y)|\leq \lambda_0 |\tau_2(x,y)|,\text{ for all }x,y\in \R,$$
for some constant $\lambda_0>0$. Finally, 
$$|\tau_1(x,y)|+|\tau_2(x,y)|+|b(x,y)|\le C(1+|y|),\text{ for all }x,y\in \R.$$
%\vspace{1mm}

\noindent
(A4) The functions $b:\R^2\rightarrow \R$ and $\tilde{\sigma}:\R\rightarrow \R$ are as in assumption (A3) and satisfy: (i) $\tilde{\sigma}$ is bounded and (ii) $(x,y)\mapsto b(e^x,y)$ is Lipschitz.

\begin{remark}[Technical points]
(i) Assumption (A1) allows us to prove the duality formula \eqref{duality} by providing the compactness needed in order to apply the Von-Neumann theorem (Theorem \ref{Von}).\\
%\vspace{1mm}
%\noindent
(ii) It is clear that $H$ is convex and degenerate elliptic; that is its derivative $H'(a)\leq 0$ for any $a\in \R$. This allows us to use the theory of viscosity solution for the equation \eqref{HJmain} (see section \ref{HJequation}). Assumption (A2) is necessary for the statement of Corollary \ref{cor1}.
\vspace{1mm}

\noindent
(iii) Assumption (A3) establishes the well-posedness of the stochastic system \eqref{SVM}. Moreover, the $C^2$ regularity assumption gives us the regularity of the solution of \eqref{HJS} (Corollary \ref{reg1}).
\vspace{1mm}

\noindent
(iv) Assumption (A4) is used to establish uniqueness of viscosity solutions with $\log^2$ growth of \eqref{HJvix'} via \cite[Theorem~2.1]{da2006uniqueness}. The boundedness of $\tilde{\sigma}$ in (A4) is further required to obtain certain integrability properties for $(X_t)_{t \geq t_0}$, which are essential for applying the Von-Neumann minimax theorem in the proof of \eqref{dual101}. A main class of examples of functions $b$ satisfying (A3) and (A4) is given when $b(x,y)$ is independent of $x$. 

\end{remark}

\subsection{Preliminary result}
Let $0\le t_0\le T$. In the proofs of our main results, we will be using two compactness results related to Fokker-Planck equations. The first one is associated with the equation
\be \label{FP}
\partial_t m=\frac{1}{2}\partial_{xx}(b_0m),
\ee
in dimension 1, where $b_0:[t_0,T]\times \R\rightarrow \R_{\ge 0}$ such that $b_0\in L^2(dm(s)\otimes ds)$. The second is for the equation of the form\footnote{Note that, at least formally, \eqref{FP} describes the distribution of the stochastic process satisfying the SDE $dX_t=\sqrt{b_0(X_t)}dW^1_t$, while for \eqref{FPdrift} the SDE is $dX_t=(\tilde{\tau}(X_t)+\tau(X_t)\alpha_k)dt+\sqrt{2}\sigma_0(X_t)dW^n_t$, where $W^1,W^n$ are 1-dimensional and $n$-dimensional Brownian motions, respectively.}
\be\label{FPdrift}
\partial_tm_k-\sum_{i,j=1}^n\partial_{ij}\left( (\sigma_0\sigma_0^\top)_{ij} D^2m_k\right)+\text{div}\left((\tilde{\tau}+\tau \alpha_k)m_k\right)=0,
\ee
for some given $\tilde{\tau}:\R^n\rightarrow \R^n$, $\sigma_0,\tau:\R^n\rightarrow \R^{n\times n}$, a dimension $n\in\mathbb{N}$ and measurable functions $(\alpha_k)_{k\in\mathbb{N}}$ with values in $\R^n$. We state this proposition.

\begin{proposition}\label{comp2}
    Let $n\in\mathbb{N}$ be a dimension. Suppose that $\tilde{\tau}:\R^n\rightarrow\R^n$ and $\sigma_0,\tau: \R^n\rightarrow \R^{n\times n}$ are Lipschitz, $\tau$ is bounded, and that for all $k\ge 1$, $(m_k,\alpha_k)$ solves \eqref{FPdrift} (in the sense of distributions) starting from the probability measure $m_0\in \mathcal{P}_2(\R^n)$ at time $t_0$. We also assume that $(m_k,\alpha_k)$ satisfies the uniform energy estimate
$$\int_{t_0}^T\int_{\R^d}|\alpha_k(t,x)|^2dm_k(t)(x)dt\leq C,$$ 
for all $k\in\mathbb{N}$ and for some constant $C$ independent of $k$. Then, for any $\delta\in (0,1)$, up to taking a subsequence, $(m_k,\alpha_km_k)$ converges in $\mathcal{C}^{\frac{1-\delta}{2}}\left( [0,T];\mathcal{P}_{2-\delta}(\mathbb{R}^d)\right)\times\mathcal{M}([0,T]\times\mathbb{R}^d,\R^d)$ to some $(m,w)$. The curve $m$ is in $\mathcal{C}^{1/2}\left( [0,T],\mathcal{P}_2(\mathbb{R}^d)\right)$, $w$ is absolutely continuous with respect to $dm(t)\otimes dt$, 
%for any $t_1,t_2\in [0,T]$ such that $t_1<t_2$ it holds that
\be \nonumber
\int_{t_0}^{T}\int_{\mathbb{R}^d} \bigg| \frac{dw}{dm(t)\otimes dt}(t,x)\bigg|^2dm(t)(x)dt\leq \liminf_{k\rightarrow +\infty}\int_{t_0}^{T}\int_{\mathbb{R}^d}|\alpha_k(t,x)|^2dm_k(t)(x)dt
\ee
and $(m,\frac{dw}{dm(t)\otimes dt})$ solves \eqref{FPdrift} starting from $m_0$.
\end{proposition}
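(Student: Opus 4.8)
The plan is to establish the stated compactness via the standard ``energy estimate plus Aubin--Lions'' strategy used throughout mean field game theory (see \cite{lasry2007mean}), adapted to the degenerate second-order operator here. First I would fix $k$ and derive a priori bounds on the curve $t \mapsto m_k(t)$. Testing \eqref{FPdrift} against $|x|^2$ (after a routine truncation/regularization to justify the computation), one gets $\frac{d}{dt}\int |x|^2 \, dm_k(t) \leq C\big(1 + \int |x|^2 \, dm_k(t)\big) + \int |\tilde\tau + \tau\alpha_k| \cdot |x| \, dm_k(t)$; using that $\tilde\tau$ is of linear growth (Lipschitz), $\tau$ is bounded, and the uniform energy bound $\int_{t_0}^T \int |\alpha_k|^2 \, dm_k \leq C$, together with Cauchy--Schwarz in the mixed measure $dm_k(t)\otimes dt$, Gronwall yields $\sup_k \sup_{t} \int |x|^2 \, dm_k(t) \leq C'$ depending only on the second moment of $m_0$, the Lipschitz/boundedness constants, and $C$. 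This uniform second-moment bound gives tightness of $\{m_k(t)\}$ for each $t$ and, more importantly, equi-integrability that upgrades weak convergence in $\mathcal{P}(\R^n)$ to convergence in $\mathcal{P}_{2-\delta}(\R^n)$.

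Next I would establish equicontinuity of $t\mapsto m_k(t)$ in the Wasserstein-1 (hence $\mathcal{P}_{2-\delta}$) metric. For $s<t$, duality gives ${\bf d}_1(m_k(s),m_k(t)) = \sup_{\|\nabla\phi\|_\infty\leq 1}\int \phi\, d(m_k(t)-m_k(s))$; plugging a test function $\phi$ into \eqref{FPdrift} over $[s,t]$ produces terms $\int_s^t\int (\sigma_0\sigma_0^\top : D^2\phi)\,dm_k + \int_s^t\int \nabla\phi\cdot(\tilde\tau+\tau\alpha_k)\,dm_k$. The drift term is controlled by $\sqrt{t-s}$ via Cauchy--Schwarz and the energy bound (plus the second-moment bound for the $\tilde\tau$ part), giving the $\mathcal{C}^{1/2}$-in-time estimate; the second-order term is only bounded by $(t-s)$ but with a constant depending on $\|D^2\phi\|_\infty$, which is why one works with the weaker $\mathcal{C}^{(1-\delta)/2}$ regularity in $\mathcal{P}_{2-\delta}$ for the sequence and recovers full $\mathcal{C}^{1/2}$ in $\mathcal{P}_2$ only for the limit. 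By Arzel\`a--Ascoli (the space of probability measures with bounded second moment being relatively compact in $\mathcal{P}_{2-\delta}$), a subsequence of $(m_k)$ converges in $\mathcal{C}^{(1-\delta)/2}([0,T];\mathcal{P}_{2-\delta}(\R^n))$ to some $m$, and the uniform modulus passes to the limit to give $m \in \mathcal{C}^{1/2}([0,T];\mathcal{P}_2(\R^n))$.

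For the momentum variable, the energy bound gives $\|\alpha_k m_k\|_{\mathcal{M}([0,T]\times\R^n;\R^n)} = \int\!\int |\alpha_k|\,dm_k \leq \big(T \sup_k \int\!\int |\alpha_k|^2 dm_k\big)^{1/2} \leq C$, so along a further subsequence $\alpha_k m_k \rightharpoonup w$ weakly-$*$ as vector measures. That $w \ll dm(t)\otimes dt$ with the stated lower-semicontinuity of the $L^2(dm\otimes dt)$-norm of the density is exactly the Benamou--Brenier-type convexity/lower-semicontinuity lemma for the functional $(m,w)\mapsto \int |dw/dm|^2\,dm$ along weak-$*$ convergent pairs (jointly convex and weakly-$*$ lsc); I would cite the standard statement (e.g. from the mean field games literature, which should appear in the appendix of this paper). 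Finally, passing to the limit in the distributional formulation of \eqref{FPdrift} is routine given $m_k \to m$ in $\mathcal{C}([0,T];\mathcal{P}_{2-\delta})$ and $\alpha_k m_k \rightharpoonup w$: the linear terms converge because the test function's derivatives are bounded and the coefficients $\sigma_0,\tilde\tau,\tau$ are continuous of controlled growth, and the energy-weighted moment bounds handle the growth of $\tilde\tau$ at infinity.

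The main obstacle I anticipate is \emph{not} any single estimate but the bookkeeping of \emph{which} topology survives each step: the interplay between (a) the linear growth of $\tilde\tau$ (which forces reliance on the uniform second-moment bound, and hence is the reason equi-integrability matters), (b) the merely bounded — not decaying — second-order coefficient $\sigma_0\sigma_0^\top$ (which degrades the time-regularity exponent from $1/2$ to $(1-\delta)/2$ at the level of the sequence), and (c) the fact that $\mathcal{P}_2$ is not compact, so one must pass through $\mathcal{P}_{2-\delta}$ and then argue that the uniform second-moment bound lets the limit recover $\mathcal{P}_2$-valued $\mathcal{C}^{1/2}$ regularity. Making all three compatible — in particular justifying the moment computations rigorously rather than formally (via truncation of the test function and a limiting argument, using that $\sigma_0,\tilde\tau$ have at most linear growth and $\tau$ is bounded) — is where the real care is needed; the convexity/lsc lemma for the kinetic energy is then quoted off the shelf.
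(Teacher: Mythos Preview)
Your proposal is correct but takes a genuinely different route from the paper. The paper lifts to the SDE level: it introduces processes $X^k_t$ solving $dX^k_t=(\tilde\tau(X^k_t)+\tau(X^k_t)\alpha_k(t,X^k_t))\,dt+\sqrt{2}\,\sigma_0(X^k_t)\,dW^n_t$ with $X^k_{t_0}\sim m_0$, so that $m_k(t)=\mathcal{L}(X^k_t)$. A Gronwall argument yields a uniform bound on $\mathbb{E}[|X^k_t|^2]$, and the equicontinuity step is then done \emph{probabilistically}: It\^o, Cauchy--Schwarz, and the energy bound give $\mathbb{E}[|X^k_t-X^k_s|^2]\leq C(t-s)$ directly, hence ${\bf d}_2(m_k(s),m_k(t))\leq C\sqrt{t-s}$ via the built-in coupling $(X^k_s,X^k_t)$. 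The remaining steps (Arzel\`a--Ascoli, weak-$*$ compactness of $\alpha_k m_k$, lower semicontinuity of the kinetic energy, passage to the limit in the equation) are delegated to \cite[Proposition~1.2]{daudin2023optimal}.

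Your purely analytic approach---testing \eqref{FPdrift} against $|x|^2$ and against Lipschitz $\phi$---reaches the same conclusions, but the equicontinuity step is harder from the PDE side precisely because the second-order term forces control of $\|D^2\phi\|_\infty$, not just $\|\nabla\phi\|_\infty$; this is the interpolation issue you correctly flag as the main obstacle. The SDE lift sidesteps it completely by supplying a canonical coupling between $m_k(s)$ and $m_k(t)$, so the $\mathcal{C}^{1/2}$-in-$\mathcal{P}_2$ bound is immediate at the level of the sequence rather than having to be recovered a posteriori for the limit. Conversely, your route has the virtue of staying entirely at the distributional level of \eqref{FPdrift} and not implicitly invoking well-posedness of an SDE with merely measurable drift $\tau\alpha_k$.
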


\begin{proof}
Let $k\in \mathbb{N}$ and consider the process $(X_t^k)_{t\in [t_0,T]}$ satisfying
$$\begin{cases}
    dX_t^k= (\tilde{\tau}(X_t^k)+\tau(X_t^k)\alpha_k(t,X_t^k))dt+\sqrt{2}\sigma_0(X_t^k)dW^n_t,\;\;\;T\ge t\ge t_0,\\
    X_{t_0}^k\sim m_0,
\end{cases}$$
where $W^n$ is an $n$-dimensional Brownian motion. We have $X_t^k\sim m_k(t)$, with $m_k$ satisfying \eqref{FPdrift}. Due to our assumptions on $\tilde{\tau},\sigma_0,\tau$ and  standard Gr\"onwall arguments, we obtain the $L^2$ estimate
$$\mathbb{E}[|X_t^k|^2]\leq C\left(\int |x|^2dm_0(x)+1\right),$$
where $C$ is independent of $k,t$ and depends only on $T$ and $\tilde{\tau},\sigma_0,\tau$. By using this $L^2$ estimate, Cauchy-Schwartz and the given energy estimate, we can now write for all $t,s\in [t_0,T]$ with $s<t$
\begin{align*}
    \mathbb{E}[|X_t^k-X_s^k|^2]&\leq 3\mathbb{E}\left[ \left|\int_s^t\tilde{\tau}(X_u^k)du \right|^2\right]+3\mathbb{E}\left[ \left|\int_s^t\tau(X_u^k)\alpha_k(u,X^k_u)du \right|^2\right]+6\mathbb{E}\left[ \int_s^t\sigma_0^2(X^k_u)du  \right]\\
    & \leq C(t-s)\left( \mathbb{E}\left[\int_s^t(|X_u|^2+1)du \right]+\|\tau\|_{\infty}^2\mathbb{E}\left[\int_s^t\alpha_k^2du \right] \right)+C\mathbb{E}\left[\int_s^t(|X_u|^2+1)du \right]\\
    &\leq C(t-s),
\end{align*}
where $C$ changes from line to line. The proof now can be finished as in \cite[Proposition 1.2]{daudin2023optimal}. We omit these details.
\end{proof}
%and for some $\alpha_k\in L^2(dm_k(t)\otimes dt)$, $k\in \mathbb{N}$, when we have a condition $\int_{t_0}^T\int_{\R^d}|\alpha_k|^2dm_k(t)(x)dt\leq C$ for all $k\in\mathbb{N}$.

\begin{proposition}\label{comp}
Suppose that (A1) holds. Assume that, for all $k\geq 1$, $(m_k,b_k)$ solves the Fokker-Planck equation \eqref{FP} (in the sense of distributions) starting from $m_0$ at time $t_0$ and satisfies the uniform energy estimate
\be \label{est1}
\int_{t_0}^T\int_{\R} L(b_k(t,x))dm_k(t)(x)dt\leq C,
\ee
for some constant $C>0$ independent of $k$. Then, there exists $\delta\in (0,1)$ such that, up to taking a subsequence, $(m_k,b_km_k)$ converges in $\mathcal{C}^{\delta}\left( [t_0,T];\mathcal{P}_{2}(\R)\right)\times\mathcal{M}([t_0,T]\times\R,\R)$ to some $(m,w)$. The curve $m$ is in $\mathcal{C}^{\delta}\left( [t_0,T],\mathcal{P}_2(\R)\right)$, $w$ is absolutely continuous with respect to $dm(t)\otimes dt$, it holds that
\be \label{limitineq}
\int_{t_0}^{T}\int_{\R} L\bigg( \frac{dw}{dm(t)\otimes dt}(t,x)\bigg)dm(t)(x)dt\leq \liminf_{k\rightarrow +\infty}\int_{t_0}^{T}\int_{\R}L\left(b_k(t,x) \right)dm_k(t)(x)dt
\ee
and $(m,\frac{dw}{dm(t)\otimes dt})$ solves \eqref{FP} starting from $m_0$.
\end{proposition}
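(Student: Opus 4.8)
The plan is to mimic the proof of Proposition~\ref{comp2}, replacing the $L^2$ energy bound used there by the $p$-coercivity of $L$ from (A1). First I would represent each $m_k$ probabilistically. Since $L$ is $p$-coercive, the energy bound \eqref{est1} together with Jensen's inequality forces $\int_{t_0}^T\int_\R b_k\,dm_k(t)\,dt<\infty$ and, more precisely, a uniform bound on $b_k$ in $L^p(dm_k(t)\otimes dt)$; the superposition principle for Fokker--Planck equations — the tool already invoked for Proposition~\ref{comp2}, cf.\ \cite[Proposition~1.2]{daudin2023optimal} — then provides, on some filtered probability space, a continuous martingale $(X_t^k)_{t\in[t_0,T]}$ with $X_{t_0}^k\sim m_0$, $\mathcal{L}(X_t^k)=m_k(t)$, and $\langle X^k\rangle_t=\int_{t_0}^t b_k(s,X_s^k)\,ds$. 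From the $L^p$ bound and H\"older's inequality I would obtain $\mathbb{E}\big[|X_t^k-X_s^k|^2\big]=\int_s^t\int_\R b_k\,dm_k\,du\le C(t-s)^{1-1/p}$ for $s\le t$, hence $\mathbf{d}_2(m_k(t),m_k(s))\le C(t-s)^{\delta}$ with $\delta:=\tfrac{p-1}{2p}\in(0,1)$, together with $\sup_{k,t}\mathbb{E}[|X_t^k|^2]\le C$.

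The next step is the compactness, and here lies the point that needs care: to get convergence in $\mathcal{P}_2$ rather than only in $\mathcal{P}_{2-\delta}$ (as in Proposition~\ref{comp2}) I would prove uniform integrability of $\{|X_t^k|^2:k\in\mathbb{N},\,t\in[t_0,T]\}$. Writing $X_t^k=X_{t_0}^k+M_t^k$, the family $\{|X_{t_0}^k|^2\}_k$ is uniformly integrable because all the $X_{t_0}^k$ share the fixed law $m_0\in\mathcal{P}_2(\R)$; for the martingale part, Burkholder--Davis--Gundy combined with Jensen and the fact that $b_k$ is bounded in $L^{1+\eta}(dm_k\otimes dt)$ for some $\eta>0$ (which holds since $p\ge 2$) yields $\sup_{k,t}\mathbb{E}[|M_t^k|^{2+\eta}]\le C$, hence uniform integrability of $\{|M_t^k|^2\}$. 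Together with the uniform second-moment bound this makes $\{m_k(t)\}_{k,t}$ relatively compact in $(\mathcal{P}_2,\mathbf{d}_2)$; combined with the equicontinuity from the first step, Arzel\`a--Ascoli gives a subsequence with $m_k\to m$ in $\mathcal{C}([t_0,T];\mathcal{P}_2(\R))$, and $m$ inherits the $\mathcal{C}^\delta$ modulus. Since the $b_k m_k\otimes dt$ are nonnegative measures of uniformly bounded mass, a further subsequence gives $b_k m_k\otimes dt\rightharpoonup w$ weakly-$*$ in $\mathcal{M}([t_0,T]\times\R)$.

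It remains to identify the limit. The convex integral functional $(\mu,\nu)\mapsto\int L\big(\tfrac{d\nu}{d\mu}\big)\,d\mu$, set to $+\infty$ unless $\nu\ll\mu$ (the correct lower semicontinuous value, since $L$ is superlinear by (A1)), is convex and lower semicontinuous for weak-$*$ convergence of measures; evaluated along $(m_k\otimes dt,\,b_km_k\otimes dt)\rightharpoonup(m\otimes dt,\,w)$ and using \eqref{est1}, it gives $\int_{t_0}^T\int_\R L\big(\tfrac{dw}{dm(t)\otimes dt}\big)\,dm(t)\,dt\le\liminf_k\int_{t_0}^T\int_\R L(b_k)\,dm_k\,dt\le C<\infty$; finiteness of the left side forces $w\ll dm(t)\otimes dt$ and is exactly \eqref{limitineq}. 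Finally, \eqref{FP} is linear in the pair $(m_k,b_k m_k)$, so its distributional formulation passes to the limit term by term: against $\phi\in C_c^\infty([t_0,T)\times\R)$, the term $\int\int\partial_t\phi\,dm_k\,dt$ converges by narrow convergence of $m_k$, the initial trace equals $m_0$ for every $k$, and $\int\int\partial_{xx}\phi\,b_k\,dm_k\,dt\to\int\int\partial_{xx}\phi\,dw$ by weak-$*$ convergence; this shows $(m,\tfrac{dw}{dm(t)\otimes dt})$ solves \eqref{FP} starting from $m_0$.

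I expect the main obstacle to be the uniform-integrability step: unlike in Proposition~\ref{comp2}, no moments may be given up, and closing the estimate genuinely uses both that $m_0$ is a fixed $\mathcal{P}_2$ measure and that $p\ge 2$ (so that $b_k$ sits in a space slightly better than $L^1$, enough to gain an extra power on the martingale part via BDG). A secondary technical point is the joint lower semicontinuity of the convex integral functional together with the conclusion that $w$ has no part singular with respect to $dm(t)\otimes dt$ — for which the superlinearity of $L$ is essential — but this can be handled by standard results on integral functionals of measures, as in the classical analysis of Benamou--Brenier-type functionals.
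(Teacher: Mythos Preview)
Your proposal is correct and follows essentially the same route as the paper: probabilistic representation of $m_k$ as the law of a martingale diffusion, H\"older equicontinuity of $t\mapsto m_k(t)$ in $\mathbf{d}_2$ via the $p$-coercivity of $L$, Arzel\`a--Ascoli together with weak-$*$ compactness for $b_k m_k$, and lower semicontinuity of the convex integral functional (for this last step the paper cites \cite[Theorem~2.34]{ambrosio2000functions}). You are in fact more careful than the paper on one point: the paper only records the equicontinuity estimate and then invokes Arzel\`a--Ascoli, whereas you explicitly supply the uniform-integrability-of-second-moments argument (via BDG and $p\ge 2$) needed to secure pointwise relative compactness in $\mathcal{P}_2$ rather than only in a weaker Wasserstein space.
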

\begin{proof}
 Note that since $b_k$ is non-negative, we may write $b_k=\sigma_k^2$, where $\sigma_k$ is the square root of $b_k$. Then, \eqref{FP} describes the law of the process $dX_t=\sigma_k(t,X_t)dB_t$. For any $s,t\in [t_0,T]$, by It\^o's formula we have
    \begin{align*}
        {\bf d}_2(m_s,m_t)&\leq \mathbb{E}[ |X_s-X_t|^2]^{1/2}=\bigg(\int_{s}^t\mathbb{E}[\sigma_k^2(\tau,X_{\tau}) ]d\tau\bigg)^{1/2}\\ &\leq (s-t)^{\delta}\bigg(\int_{t_0}^T\int L(b_k(\tau,x))dm_k(\tau)(x)d\tau\bigg)^{1-\delta}
        \leq C(s-t)^{\delta},
    \end{align*}
    for some constant $0<\delta<1$, because of the assumptions on $L$ and H\"older's inequality. Also, due to \eqref{est1} and the coercivity of $L$, the sequence $b_km_k$ has bounded variation. By Banach-Alaoglu and Arzela-Ascoli we deduce the convergence of the pair $(m_k,b_km_k)$ to some $(m,w)$ as claimed.

\noindent
    By Theorem 2.34 of \cite{ambrosio2000functions}, we discover that $w$ is absolutely continuous with respect to $dm(\tau)\otimes d\tau$ and that \eqref{est1} holds. Let $b$ be the Radon-Nikodym derivative of $w$ with respect to $dm(\tau)\otimes d\tau$. It is straightforward to show that $(m,wm)$ is a solution to \eqref{FP} starting from $m_0$.
\end{proof}

\noindent
We will also need the following lemma.

\begin{lemma}\label{condint}
    Let $0<T_1<T_2$, $(\Omega,\mathcal{F},\mathbb{F}=(\mathcal{F}_t)_{t\ge 0}, \mathbb{P}_0)$ be a filtered probability space and $(f_n)_{n\in\mathbb{N}}: \Omega\rightarrow [0,+\infty)$ a sequence of probability densities that converges weakly in $L^1$ to some probability density $f$. Assume further that $(\mathbb{P}_n)_{n\in\mathbb{N}}$ are probability measures over $(\Omega,\mathcal{F})$ such that $d\mathbb{P}_n=f_nd\mathbb{P}$ and $V$ is a random variable in $(\Omega,\mathcal{F})$.
    %, which is uniformly integrable with respect to $(\mathbb{P}_n)_{n\in\mathbb{N}}$. 
    The following statements hold: (i) $(\mathbb{P}_n)_{n\in\mathbb{N}}$ converges weakly to the probability measure $\mathbb{P}$, which is such that $d\mathbb{P}=fd\mathbb{P}_0$, (ii) if $V$ is lower bounded, then $\liminf_n \mathbb{E}^{\mathbb{P}_0}[Vf_n]\ge \mathbb{E}^{\mathbb{P}_0}[Vf]$, and (iii) if $V$ is bounded or uniformly integrable with respect to $(\mathbb{P}_n)_{n\in\mathbb{N}}$, then $\mathbb{E}^{\mathbb{P}_0}[Vf_n|\mathcal{F}_{T_1}]\rightharpoonup \mathbb{E}^{\mathbb{P}_0}[Vf|\mathcal{F}_{T_1}]$ weakly in $L^1$.
\end{lemma}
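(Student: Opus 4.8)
The plan is to reduce all three statements to the single fact that $L^\infty(\Omega,\mathcal{F},\mathbb{P}_0)$ is the continuous dual of $L^1(\Omega,\mathcal{F},\mathbb{P}_0)$, so that the hypothesis $f_n\rightharpoonup f$ weakly in $L^1$ says exactly that $\mathbb{E}^{\mathbb{P}_0}[\phi f_n]\to\mathbb{E}^{\mathbb{P}_0}[\phi f]$ for every bounded measurable $\phi$. For (i) I would simply take a bounded measurable (in particular, bounded continuous) $\varphi$ and write $\mathbb{E}^{\mathbb{P}_n}[\varphi]=\mathbb{E}^{\mathbb{P}_0}[\varphi f_n]\to\mathbb{E}^{\mathbb{P}_0}[\varphi f]=\mathbb{E}^{\mathbb{P}}[\varphi]$; choosing $\varphi\equiv 1$ also shows $\mathbb{P}(\Omega)=1$. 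This requires no further work.

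For (ii), after shifting $V$ by a constant (harmless, since $\mathbb{E}^{\mathbb{P}_0}[f_n]=\mathbb{E}^{\mathbb{P}_0}[f]=1$) I may assume $V\ge 0$, and then truncate: with $V_M:=V\wedge M\in L^\infty$, the pointwise bound $Vf_n\ge V_Mf_n$ together with weak $L^1$ convergence applied to $V_M$ gives $\liminf_n\mathbb{E}^{\mathbb{P}_0}[Vf_n]\ge\liminf_n\mathbb{E}^{\mathbb{P}_0}[V_Mf_n]=\mathbb{E}^{\mathbb{P}_0}[V_Mf]$, after which $M\to\infty$ and monotone convergence finish the proof (both sides possibly $+\infty$). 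In short, (ii) is a Fatou-type lower semicontinuity property for nonnegative, possibly unbounded, test functions against weakly convergent densities, obtained by the usual truncation trick.

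For (iii), the first step is to observe that $\mathbb{E}^{\mathbb{P}_0}[Vf_n\mid\mathcal{F}_{T_1}]$ and $\mathbb{E}^{\mathbb{P}_0}[Vf\mid\mathcal{F}_{T_1}]$ are $\mathcal{F}_{T_1}$-measurable and, thanks to (ii), integrable, so to prove the asserted weak $L^1$ convergence it is enough to show that $\mathbb{E}^{\mathbb{P}_0}[g\,\mathbb{E}^{\mathbb{P}_0}[Vf_n\mid\mathcal{F}_{T_1}]]\to\mathbb{E}^{\mathbb{P}_0}[g\,\mathbb{E}^{\mathbb{P}_0}[Vf\mid\mathcal{F}_{T_1}]]$ for all $g\in L^\infty(\mathcal{F}_{T_1})$ (a general $g\in L^\infty(\mathcal{F})$ is replaced by $\mathbb{E}^{\mathbb{P}_0}[g\mid\mathcal{F}_{T_1}]$), and the defining property of conditional expectation collapses this to $\mathbb{E}^{\mathbb{P}_0}[gVf_n]\to\mathbb{E}^{\mathbb{P}_0}[gVf]$. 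If $V$ is bounded this is immediate since $gV\in L^\infty$. If $V$ is only uniformly integrable with respect to $(\mathbb{P}_n)_n$, I would split $V=V_M+(V-V_M)$ at truncation level $M$: for fixed $M$ the bounded part converges by the hypothesis, while $|V-V_M|\le|V|\mathbf{1}_{\{|V|>M\}}$ bounds the two remainders by $\|g\|_\infty\sup_n\mathbb{E}^{\mathbb{P}_n}[|V|\mathbf{1}_{\{|V|>M\}}]$ and $\|g\|_\infty\mathbb{E}^{\mathbb{P}}[|V|\mathbf{1}_{\{|V|>M\}}]$ respectively; sending $n\to\infty$ and then $M\to\infty$ gives the claim.

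The one step I expect to require a genuine argument — hence the main obstacle — is controlling the limit tail $\mathbb{E}^{\mathbb{P}}[|V|\mathbf{1}_{\{|V|>M\}}]$, i.e. verifying that $V$ is still uniformly integrable under $\mathbb{P}$. This is precisely where part (ii) gets reused: applied to the nonnegative random variable $|V|\mathbf{1}_{\{|V|>M\}}$ it yields $\mathbb{E}^{\mathbb{P}}[|V|\mathbf{1}_{\{|V|>M\}}]\le\liminf_n\mathbb{E}^{\mathbb{P}_n}[|V|\mathbf{1}_{\{|V|>M\}}]\le\sup_n\mathbb{E}^{\mathbb{P}_n}[|V|\mathbf{1}_{\{|V|>M\}}]$, which tends to $0$ as $M\to\infty$ by uniform integrability. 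Everything else is bookkeeping.
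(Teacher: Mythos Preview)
Your proof is correct and follows essentially the same approach as the paper: all three parts reduce to testing against bounded random variables via the $L^\infty$--$L^1$ duality, with (ii) and the uniformly integrable case of (iii) handled by truncation. One small improvement over the paper's own argument is that you explicitly invoke (ii) to show $V\in L^1(\mathbb{P})$ and control the limit tail $\mathbb{E}^{\mathbb{P}}[|V|\mathbf{1}_{\{|V|>M\}}]$, whereas the paper sends $R\to\infty$ in $\int_{\{|V|\le R\}}VZf\,d\mathbb{P}_0$ without explicitly justifying that this integral has a finite limit.
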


\begin{proof}
    For (i), we have for any $g$ bounded and continuous
    $$\int_\Omega gd\mathbb{P}_n=\int_\Omega g f_nd\mathbb{P}_0\xrightarrow{n\rightarrow\infty}\int_\Omega gfd\mathbb{P}_0=\int_\Omega gd\mathbb{P}.$$
    For (ii), let $N\in \mathbb{N}$ and $K\in \R$ such that $V\ge K$ almost surely. We observe that
    \begin{align*}
        \mathbb{E}^{\mathbb{P}_0}[Vf_n]=\mathbb{E}^{\mathbb{P}_0}[(V-K)f_n]+K\geq \mathbb{E}^{\mathbb{P}_0}[\ind_{\{V-K\le N\}}(V-K)f_n]+K.
    \end{align*}
    Since $\ind_{\{V-K\le N\}}(V-K)$ is bounded, by sending $n\rightarrow +\infty$ we get
    \begin{align*}
        \liminf_{n\rightarrow +\infty}\mathbb{E}^{\mathbb{P}_0}[Vf_n]\ge \mathbb{E}^{\mathbb{P}_0}[\ind_{\{V-K\le N\}}(V-K)f]+K.
    \end{align*}
    The result follows by letting $N\rightarrow +\infty$.

\noindent
For (iii), we start with the case where $V$ is bounded. Let $Z$ be a bounded $\mathcal{F}_{T_1}$-measurable random variable. Then, we have
\begin{align*}
    \int Z \mathbb{E}^{\mathbb{P}_0}[Vf_n|\mathcal{F}_{T_1}]d\mathbb{P}_0=\int ZVf_nd\mathbb{P}_0\xrightarrow{n\rightarrow +\infty}\int ZVfd\mathbb{P}_0=\int Z\mathbb{E}^{\mathbb{P}_0}[Vf|\mathcal{F}_{T_1}]d\mathbb{P}_0.
\end{align*} 
Now we assume that $V$ is uniformly integrable with respect to $(\mathbb{P}_n)_{n\in \mathbb{N}}$. Let $\e>0$. By the uniform integrability of $V$, there exists $R>0$ such that $\int_{\{|V|>R\}} |V|d\mathbb{P}_n<\e$ for any $n\in\mathbb{N}$. We have for $Z$ bounded $\mathcal{F}_{T_1}$-measurable
    $$\int_\Omega VZf_nd\mathbb{P}_0\le \int_{\{|V|>R\}}VZd\mathbb{P}_n+ \int_{\{|V|\leq R\}}VZf_nd\mathbb{P}_0<\e\|Z\|_{\infty}+ \int_{\{|V|\leq R\}}VZf_nd\mathbb{P}_0,$$
    therefore, using the result for bounded $V's$, $\limsup_n\int_\Omega VZf_nd\mathbb{P}_0\le \e\|Z\|_{\infty}+\int_{\{|V|\le R\}}VZfd\mathbb{P}_0$. We now send $R\rightarrow +\infty$ to derive
    $$\limsup_n\int_\Omega VZf_nd\mathbb{P}_0\le \e\|Z\|_{\infty}+\int VZfd\mathbb{P}_0\implies \limsup_n\int_\Omega VZf_nd\mathbb{P}_0\le \int VZfd\mathbb{P}_0.$$
    With similar arguments, we can prove that
    $$\liminf_n\int_\Omega VZf_nd\mathbb{P}_0\ge \int VZfd\mathbb{P}_0.$$
    The result now follows as in the case where $V$ is bounded.
\end{proof}

\section{Martingale Optimal Transport, Proof of Theorem \ref{main2}}

\noindent
We start by noting the following non-probabilistic interpretation of \eqref{mot}.

\begin{proposition}\label{nonprob}
    Under the assumptions of Theorem \ref{main2}, we have that the minimization problem $V(T_0,\mu_0,\mu_1,\mu_2)$ is admissible. Furthermore, the following equality is true
\be \label{opt1}    
    \begin{split} 
V(T_0,\mu_0,\mu_1,\mu_2)=\inf_{b,m} \bigg\{ \int_{T_0}^{T_2}&\int L(b(s,x))dm_s(x)ds\\
&\bigg|\partial_t m=\frac{1}{2}\partial_{xx}(m b),\; m(T_0)=\mu_0,\;  m(T_1)=\mu_1, \; m(T_2)=\mu_2\bigg\},
\end{split}
\ee
where the infimum is taken over all $b:[T_0,T_2]\times \R\rightarrow \R_{\ge 0}$ with $b\in L^2(dm_s\otimes ds)$ and we require the equation for $m$ in \eqref{opt1} to hold in the weak sense.  
\end{proposition}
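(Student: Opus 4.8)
The plan is to establish the two inequalities between $V(T_0,\mu_0,\mu_1,\mu_2)$ and the right-hand side of \eqref{opt1} and, separately, to exhibit one feasible point for \eqref{mot}. I would treat the case $T_0\in[0,T_1)$ in detail, the case $T_0\in(T_1,T_2]$ being identical but with the (now vacuous) constraint at $T_1$ removed. For admissibility, on each subinterval $[T_i,T_{i+1}]$ whose endpoint marginals satisfy $\mu_i\leq_c\mu_{i+1}$ I would solve a Skorokhod embedding to get a stopping time $\tau$ with $B_0\sim\mu_i$, $B_\tau\sim\mu_{i+1}$ and $\mathbb{E}[\tau]=\int x^2\,d\mu_{i+1}-\int x^2\,d\mu_i<\infty$ (using $\mu_i,\mu_{i+1}\in\mathcal{P}_2$), and then time-change the stopped martingale $B_{\tau\wedge\cdot}$ through an increasing $C^1$ homeomorphism $[T_i,T_{i+1})\to[0,\infty)$; the resulting process is a continuous, square-integrable martingale of the form $dX_t=\sigma_t\,dB_t$ with the prescribed marginals at the endpoints. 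Gluing these bridges via regular conditional distributions at the joining time $T_1$ produces $\mathbb{P}\in\mathcal{P}_m$, so \eqref{mot} is admissible.

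For the inequality $V\geq$ (the infimum in \eqref{opt1}): given $\mathbb{P}\in\mathcal{P}_m$, write $dX_t=\sigma_t\,dB_t$, put $m_t:=\mathcal{L}^{\mathbb{P}}(X_t)$ and define $b(t,x):=\mathbb{E}^{\mathbb{P}}[\sigma_t^2\mid X_t=x]$, choosing a jointly measurable version by disintegrating $dt\otimes d\mathbb{P}$. Applying It\^o's formula to $\phi(X_t)$, $\phi\in C_c^\infty(\R)$, and taking expectations (the local martingale term is a true martingale since $\mathbb{E}^{\mathbb{P}}[\langle X\rangle_{T_2}]=\int x^2\,d\mu_2-\int x^2\,d\mu_0<\infty$) shows that $(m,b)$ is a weak solution of $\partial_t m=\frac{1}{2}\partial_{xx}(mb)$ with $m(T_i)=\mu_i$ and $b\geq0$. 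If $\mathbb{E}^{\mathbb{P}}[\int_{T_0}^{T_2}L(\sigma_t^2)\,dt]=+\infty$ there is nothing to prove; otherwise conditional Jensen and convexity of $L$ give $\int_{T_0}^{T_2}\!\int L(b)\,dm_t\,dt\leq\mathbb{E}^{\mathbb{P}}[\int_{T_0}^{T_2}L(\sigma_t^2)\,dt]$, while $p$-coercivity from (A1) with $p\geq2$ together with H\"older on the finite measure space $dm_t\otimes dt$ gives $b\in L^2(dm_t\otimes dt)$. Thus $(b,m)$ is admissible for \eqref{opt1} with cost no larger than that of $\mathbb{P}$, and taking the infimum over $\mathbb{P}$ yields the inequality.

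For the reverse inequality $V\leq$ (the infimum in \eqref{opt1}): given an admissible pair $(b,m)$ for \eqref{opt1} with finite cost (otherwise nothing to prove), note $b\in L^2(dm_t\otimes dt)\subset L^1(dm_t\otimes dt)$ by H\"older. I would then invoke the superposition principle for Fokker-Planck equations (Trevisan's theorem, in the degenerate, time-inhomogeneous, zero-drift setting), applied to the narrowly continuous representative of $m$: this yields a probability measure $\mathbb{P}$ on $C([T_0,T_2];\R)$ under which the canonical process $X$ satisfies $d\langle X\rangle_t=b(t,X_t)\,dt$, $X-X_{T_0}$ is a local martingale, and $\mathcal{L}^{\mathbb{P}}(X_t)=m_t$ for every $t$ (in particular $\mathcal{L}^{\mathbb{P}}(X_{T_i})=\mu_i$). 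Since $\mathbb{E}^{\mathbb{P}}[\langle X\rangle_{T_2}]=\int_{T_0}^{T_2}\!\int b\,dm_t\,dt<\infty$ and $\mu_0\in\mathcal{P}_2$, $X$ is in fact a square-integrable true martingale; writing $dX_t=\sqrt{b(t,X_t)}\,dB_t$ for a suitable Brownian motion $B$ (the standard L\'evy-characterization construction, enlarging the probability space where $b$ degenerates) shows $\mathbb{P}\in\mathcal{P}_m$, and $\mathbb{E}^{\mathbb{P}}[\int_{T_0}^{T_2}L(\sigma_t^2)\,dt]=\int_{T_0}^{T_2}\!\int L(b)\,dm_t\,dt$. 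Taking the infimum over $(b,m)$ gives the inequality, and the two bounds combine to \eqref{opt1}, valid also when both sides are $+\infty$.

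The main obstacle I anticipate is the reverse inequality, and within it the superposition step: one has to verify that the merely $L^1(dm\otimes dt)$, possibly degenerate, time-dependent diffusion coefficient $b$ (with vanishing drift) falls within the hypotheses of a superposition theorem yielding a \emph{continuous} martingale-problem solution with the prescribed one-time marginals, then upgrade that local martingale to a genuine martingale and recover the It\^o representation $dX_t=\sigma_t\,dB_t$ with $\sigma_t^2=b(t,X_t)$ so that the constructed measure really lies in $\mathcal{P}_m$. The disintegration producing a jointly measurable $b(t,x)$ and the Jensen/H\"older bookkeeping that keeps $b$ in $L^2(dm_t\otimes dt)$ are routine but should be carried out explicitly.
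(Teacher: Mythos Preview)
Your proposal is correct and essentially coincides with the paper's treatment of the equality \eqref{opt1}: the paper simply cites \cite[Lemma~3.1, Proposition~3.4]{guo2022calibration}, noting that ``the proof uses a mimicking theorem,'' which is precisely your $V\geq$ argument (project $\sigma_t^2$ onto $X_t$ via conditional expectation and apply conditional Jensen); your $V\leq$ direction via Trevisan's superposition principle is the natural converse and is what one finds in that reference.

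Where you genuinely diverge is in the admissibility argument. The paper defers this to Step~1 of the proof of Theorem~\ref{main3}, which proceeds as follows: invoke Strassen's theorem to obtain a one-step martingale coupling $\pi(dx,dy)=\pi_x(dy)\otimes\mu(dx)$ of $\mu\leq_c\nu$; for each $x$ use Brenier's theorem to write $\pi_x=(D\phi^x)_\#\gamma$ for a convex potential $\phi^x$; then set $M_s:=\mathbb{E}[D\phi^X(B_1)\mid\mathcal{F}_s]$, check $M_0\sim\mu$, $M_1\sim\nu$, and finally apply the Brunick--Shreve mimicking theorem and a time reparametrization. Your Skorokhod-embedding-plus-time-change construction is more elementary (no optimal-transport map, no mimicking step needed for admissibility itself) and arguably cleaner, at the mild cost of having to justify that the time-changed stopped Brownian motion has absolutely continuous quadratic variation so that $dX_t=\sigma_t\,dB_t$ holds; this follows from your $C^1$ homeomorphism assumption on the time change. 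Either route works; yours avoids the detour through Brenier.
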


\begin{proof}
    The admissibility follows from Theorem \ref{main3} below. The proof of \eqref{opt1} uses a mimicking theorem and can be found in \cite[Lemma 3.1, Proposition 3.4]{guo2022calibration}.
\end{proof}
\vspace{3mm}

\begin{remark}
To motivate the introduction of the Dirac delta in \eqref{HJmain}, we consider that at a fixed time $t\in [0,T_1]$ the distribution is $\mu_0$ and we observe that, at least formally, we have
\begin{align*}
V(t,\mu_0,\mu_1,&\mu_2)=\inf_{b,m}\sup_{u_1,u_2}\bigg\{ \int_t^{T_2}\int L(b(s,x))dm_s(x)ds +\int u_1dm_{T_1}\\
&\hspace{6cm}-\int u_1d\mu_1 +\int u_2 dm_{T_2}-\int u_2 d\mu_2\bigg\}\\
    &\hspace{-0.5cm}=\sup_{u_1,u_2}\inf_{b,m}\bigg\{ \int_t^{T_2}\int L(b(s,x))dm_s(x)ds +\int u_1dm_{T_1}+\int u_2 dm_{T_2}-\int u_1d\mu_1-\int u_2 d\mu_2\bigg\}\\
    &\hspace{-1.2cm}=\sup_{u_1,u_2}\bigg\{ \inf_{b,m}\bigg\{ \int_t^{T_2}\int L(b(s,x))dm_s(x)ds +\int u_1dm_{T_1}+\int u_2 dm_{T_2}\bigg\} -\int u_1d\mu_1-\int u_2 d\mu_2\bigg\}.
\end{align*}
Studying the infimum that is inside the supremum is standard in stochastic optimal control theory (e.g \cite[Chapter 3]{cardaliaguet2019master}), when instead of the term $\int u_1dm_{T_1}$ we have $\int_t^{T_2}F(s,m(s))ds$ for some running cost function $F:[0,T_2]\times \mathcal{P}(\R)\rightarrow \R$. In our case, formally, $F(s,m)=\int \delta_{T_1}(s)u_1(x)dm(x)$ with $\frac{\delta F}{\delta m}(s,m,x)=\delta_{T_1}(s)u_1(x)$ and hence the optimizer is expected to be associated with the Hamilton-Jacobi equation \eqref{HJmain}. In particular, the optimal control is $b(s,x)=-H'\left(\frac{D^2 u(s,x)}{2}\right),\; s\in [t,T_2]$, where $u$ solves \eqref{HJmain}.
\end{remark}
\vspace{3mm}

\noindent 
To study \eqref{opt1}, we start with its simplified version; namely when there is no intermediate constraint $m(T_1)=\mu_1$. We note that if $T_0\in (T_1,T_2]$, then \eqref{opt1} becomes independent of $\mu_1$, therefore $V(T_0,\mu,\mu_1,\mu_2)$ is identical to
\be\label{opt2}
V_2(T_0,\mu_0,\mu_2)=\inf_{b,m}\bigg\{  \int_{T_0}^{T_2}\int L(b(s,x))dm_s(x)ds\bigg| \partial_t m=\frac{1}{2}\partial_{xx}(m b),\; m(T_0)=\mu_0, \; m(T_2)=\mu_2\bigg\}.
\ee

\noindent
If $T_0\in[0,T_1]$, for any $b_1,b_2$ admissible functions satisfying
\be\label{5.1}
\begin{cases}
\partial_t m_1=\frac{1}{2}\partial_{xx}(m_1b_1),\\
m_1(T_0)=\mu_0,\;\;m_1(T_1)=\mu_1,
\end{cases}
\text{ and }
\begin{cases}
\partial_t m_2=\frac{1}{2}\partial_{xx}(m_2b_2),\\
m_2(T_1)=\mu_1,\;\;m_2(T_2)=\mu_2,
\end{cases}
\ee
we write $b(s,x)=1_{[T_0,T_1]}(s)b_1(s,x)+1_{(T_1,T_2]}(s)b_2(s,x)$ and $m(s)=1_{[T_0,T_1]}(s)m_1(s)+1_{(T_1,T_2]}(s)m_2(s)$. The pair $(m,b)$, then satisfies in the weak sense
\be\nonumber
\begin{cases}
\partial_tm=\frac{1}{2} \partial_{xx}(mb),\\
m(T_0)=\mu_0,\; m(T_1)=\mu_1,\; m(T_2)=\mu_2,
\end{cases}
\ee
and hence it is admissible for \eqref{opt1}. Therefore, depending on the value of $T_0$, we can study \eqref{opt1} as
\begin{align*}
V_1(T_0,\mu_0,\mu_1,\mu_2)=\inf_{b,m}\bigg\{ \int_{T_0}^{T_1}\int L(b(s,x))dm_s(x)ds+V_2(T_1,\mu_1,\mu_2)\bigg| &\partial_tm=\frac{1}{2}\partial_{xx}(mb),\\
&\; m(T_0)=\mu_0, m(T_1)=\mu_1\bigg\},
\end{align*}
if $T_0\in [0,T_1]$ and $V(T_0,\mu,\mu_1,\mu_2)=V_2(T_0,\mu,\mu_2)$, where $V_2$ is given in \eqref{opt2}, if $T_0\in [T_1,T_2]$. To prove Theorem \ref{main2}, we will start by proving its following simplified version.

\begin{theorem}\label{main3}
Assume (A1) and (A2). Let $\mu,\nu\in\mathcal{P}(\R)$ be two probability measures in convex order $\mu\leq_c\nu$, $T>0$, $T_0\in [0,T]$ and 
\be \label{opt4}
U(T_0,\mu,\nu):=\inf_{b,m}\bigg\{ \int_{T_0}^{T}\int L(b(s,x))dm_s(x)ds\bigg| \partial_tm=\frac{1}{2}\partial_{xx}(mb),\; m(T_0)=\mu, m(T)=\nu\bigg\}.
\ee
Then, $U(T_0,\mu,\nu)$ is admissible and, whenever it is finite,
\be\label{duality2}
U(T_0,\mu,\nu)=\sup_{g\in \text{Lip}}\bigg\{ -\int g(x)d\nu(x)+ \int u(T_0,x)d\mu (x))      \bigg\},
\ee
where the supremum is over all bounded and Lipschitz functions $g$ and where $u$ is a viscosity solution of the Hamilton-Jacobi-Bellman equation
\be \label{HJBmfp}
\begin{cases}
    -\partial_t u+H\left( \frac{D^2u}{2}\right)=0,& \text{ in }[T_0,T)\times \R,\\
    u(T)=g, & \text{ in }\R.
\end{cases}
\ee
\end{theorem}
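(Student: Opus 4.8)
The plan is to prove the duality formula by realizing $U(T_0,\mu,\nu)$ as the value of a two-player zero-sum game and applying the Von Neumann minimax theorem (Theorem \ref{Von}). First I would establish admissibility: since $\mu\leq_c\nu$, a martingale interpolation (e.g.\ running Brownian motion appropriately time-changed, or invoking the mimicking/Kellerer-type construction referenced in Proposition \ref{nonprob}) produces an admissible pair $(m,b)$, so the constraint set is nonempty. Next, I would introduce Lagrange multipliers for the terminal constraint $m(T)=\nu$ only: writing
\[
U(T_0,\mu,\nu)=\inf_{(m,b)}\sup_{g\in\text{Lip}}\Bigl\{\int_{T_0}^{T}\!\!\int L(b(s,x))\,dm_s(x)\,ds+\int g\,dm_T-\int g\,d\nu\Bigr\},
\]
where now the infimum is over all weak solutions of $\partial_t m=\tfrac12\partial_{xx}(mb)$ with only the initial condition $m(T_0)=\mu$ imposed; the inner supremum is $+\infty$ unless $m_T=\nu$, recovering \eqref{opt4}.

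The heart of the argument is the exchange of $\inf$ and $\sup$. For this I would equip the set of Lipschitz functions $g$ with a suitable (weak) topology on a bounded convex subset, note that the payoff is affine (hence concave) in $g$, and verify convexity of the payoff in $(m,b)$ — here the standard trick is to reparametrize by the momentum variable $w=bm$ (or $w=\sqrt{b}\,m$-type), under which $(m,b)\mapsto\int L(b)\,dm$ becomes jointly convex and lower semicontinuous by (A1), and the Fokker–Planck constraint becomes linear in $(m,w)$. Compactness in $(m,w)$ along minimizing sequences with uniformly bounded energy is exactly Proposition \ref{comp} (using $p$-coercivity from (A1), which also gives the needed lower semicontinuity of the cost via \eqref{limitineq}). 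With these ingredients the Von Neumann theorem yields
\[
U(T_0,\mu,\nu)=\sup_{g\in\text{Lip}}\Bigl\{-\int g\,d\nu+\inf_{(m,b)}\Bigl[\int_{T_0}^T\!\!\int L(b)\,dm_s\,ds+\int g\,dm_T\Bigr]\Bigr\}.
\]

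It then remains to identify the inner infimum, for fixed $g$, with $\int u(T_0,x)\,d\mu(x)$, where $u$ solves \eqref{HJBmfp}. This is a classical stochastic control / mean-field-flow computation: the problem $\inf\{\int_{T_0}^T\!\int L(b)\,dm_s\,ds+\int g\,dm_T:\partial_t m=\tfrac12\partial_{xx}(mb),\ m(T_0)=\mu\}$ has value $\int u(T_0,\cdot)\,d\mu$ with $u$ the value function, which by dynamic programming is (a viscosity solution of) the HJB equation $-\partial_t u+\sup_{b\geq0}\{-b\tfrac{D^2u}{2}-L(b)\}=0$, i.e.\ $-\partial_t u+H(\tfrac{D^2u}{2})=0$, with terminal data $u(T,\cdot)=g$; one direction uses Itô/verification against smooth supersolutions and the other a measurable-selection of the optimal feedback $b^*=-H'(\tfrac{D^2u}{2})$, together with a mollification argument to handle the merely-viscosity regularity of $u$ and the degenerate ellipticity of $H$ (recall $H'\leq0$). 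The existence and comparison theory for \eqref{HJBmfp} that legitimizes "$u$ is a viscosity solution" I would cite from the viscosity-solutions appendix referenced in the paper.

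**Main obstacle.** I expect the delicate point to be the minimax exchange — specifically, securing a topology on the $g$-variable and a compact convex domain for $(m,w)$ in which \emph{both} semicontinuity hypotheses of Theorem \ref{Von} hold simultaneously, and ruling out a duality gap when $U(T_0,\mu,\nu)=+\infty$ is excluded but the approximating values are large. The reparametrization $w=bm$ and Proposition \ref{comp} are designed precisely to defuse this, but care is needed because the natural constraint $b\geq0$ and the nonlinearity of $L$ interact with the weak convergence of $m_k$ (one only gets $m_k\to m$ in $\mathcal{C}^\delta([T_0,T];\mathcal{P}_2)$, so the terminal evaluations $\int g\,dm_T^k\to\int g\,dm_T$ require $g$ bounded continuous, which is why restricting to bounded Lipschitz $g$ is not merely cosmetic). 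A secondary technical nuisance is the viscosity-solution verification for the inner control problem when $H$ is only $C^1$ and degenerate, handled by sup/inf-convolution regularization.
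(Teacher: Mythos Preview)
Your proposal is correct and follows essentially the same route as the paper: relax the terminal constraint via a bounded Lipschitz multiplier $g$, apply the Von Neumann minimax theorem (Theorem \ref{Von}) with the momentum reparametrization $W=bm$ and Proposition \ref{comp} supplying compactness and lower semicontinuity on the $(m,W)$ side, then identify the inner infimum with $\int u(T_0,\cdot)\,d\mu$ via Proposition \ref{proof of HJBmfp}. One clarification on your flagged obstacle: the version of the minimax theorem invoked (Theorem \ref{Von}) requires a Hausdorff topology and compactness only on the $\mathbb{B}=(m,W)$ side, so no topology on the $g$-variable is needed at all; separately, the paper's admissibility argument (Step~1) is a concrete Strassen--Brenier--martingale-representation--mimicking construction rather than a Kellerer-type citation.
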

\vspace{2mm}

\noindent
Before presenting the proof of Theorem \ref{main3}, we will show the following Proposition which is related to the Hamilton-Jacobi-Bellman equation \eqref{HJBmfp}.

\begin{proposition} \label{proof of HJBmfp}
    Assume that (A1), (A2) hold. Let $g$ be a Lipschitz function and $\mu\in\mathcal{P}(\R)$. Then, the function $U_g:[0,T]\times \mathcal{P}(\R)\rightarrow \R$ with
    $$U_g(t,\mu)=\inf_{b,m}\bigg\{ \int_t^T\int L\left(b(x,s)\right) dm_s(x)ds+\int g(x)dm_T(x)\bigg\},$$
    where the infimum is taken over all $b:[t,T]\times \R\rightarrow \R_{\ge 0}$ and $m\in \mathcal{P}(\R)$ such that $\partial_sm=\frac{1}{2}\partial_{xx}(bm)$ in the weak sense and $m(t)=\mu$, takes the form $\int u(t,x) d\mu(x)$, where $u$ is a viscosity solution of \eqref{HJBmfp}.
\end{proposition}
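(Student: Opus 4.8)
The plan is to show that the function $U_g$ is \emph{linear} in the measure variable, i.e.\ $U_g(t,\mu)=\int u(t,x)\,d\mu(x)$ for an appropriate function $u$, and that this $u$ solves \eqref{HJBmfp} in the viscosity sense. The linearity should come from the fact that the running cost $L(b(s,x))\,dm_s(x)$ and the terminal cost $\int g\,dm_T$ are linear in $m$, and the constraint $\partial_s m=\tfrac12\partial_{xx}(bm)$ is ``linear over trajectories'' in the sense that a mixture of two admissible flows (started from a mixture of initial data) is again admissible with the averaged drift defined by $b=(b_1m_1+b_2m_2)/(m_1+m_2)$; convexity of $L$ plus Jensen then gives that splitting the initial mass and optimizing each piece separately is no worse, while superposition/disintegration shows it is no better. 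Concretely, I would first \emph{define} $u(t,x):=U_g(t,\delta_x)$, the optimal cost starting from a Dirac mass, and then prove $U_g(t,\mu)=\int u(t,x)\,d\mu(x)$ by two inequalities.

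For the inequality $U_g(t,\mu)\le \int u(t,x)\,d\mu(x)$: given a near-optimal flow $(m^x,b^x)$ from each $\delta_x$, integrate them against $\mu$ to produce $m_s=\int m^x_s\,d\mu(x)$ with drift $b_s(y)=\big(\int b^x_s(y)\,dm^x_s(y)\,d\mu(x)\big)/dm_s(y)$; this $(m,b)$ is admissible from $\mu$, and by Jensen's inequality applied to the convex function $L$ (after writing $\int L(b_s)\,dm_s$ against the probability kernel that disintegrates $m_s$ over $x$) the cost is bounded by $\int(\text{cost from }\delta_x)\,d\mu(x)$. (A measurable-selection argument is needed to pick the near-optimal flows measurably in $x$; alternatively one can work with a fixed $\e$ and standard selection theorems.) For the reverse inequality $U_g(t,\mu)\ge \int u(t,x)\,d\mu(x)$: take any admissible $(m,b)$ from $\mu$, realize it probabilistically as $dX_s=\sqrt{b(s,X_s)}\,dB_s$ with $X_t\sim\mu$ (using $b\ge0$ and the superposition principle), condition on $X_t=x$, and apply the tower property / Fubini to see that the total cost equals $\int \mathbb{E}[\,\cdots\mid X_t=x]\,d\mu(x)\ge \int u(t,x)\,d\mu(x)$, since the conditional flow is admissible from $\delta_x$ (here one uses that $L\ge0$ up to an affine bound, from (A1), to justify Fubini/Tonelli).

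Having established linearity, the identification of $u$ with the viscosity solution of \eqref{HJBmfp} is the classical verification that the value function of the stochastic control problem $u(t,x)=\inf_{b\ge0}\mathbb{E}\big[\int_t^T L(b(s,X_s))\,ds+g(X_T)\big]$, with $dX_s=\sqrt{b(s,X_s)}\,dB_s$, solves the HJB equation $-\partial_t u+\sup_{b\ge0}\{-\tfrac{b}{2}D^2u-L(b)\}=0$, i.e.\ $-\partial_t u+H(\tfrac{D^2u}{2})=0$ with terminal data $g$. This follows from the dynamic programming principle together with the standard viscosity sub/supersolution argument (see e.g.\ the references on stochastic control already cited in the paper); coercivity of $L$ from (A1) guarantees the supremum over $b\ge0$ is attained and that $u$ has at most the growth of $g$ plus a constant, so comparison applies. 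The main obstacle I anticipate is the rigorous justification of the linearity, specifically the measurable selection of near-optimal controls $b^x$ depending on the base point $x$ and the careful disintegration needed to apply Jensen — the probabilistic (superposition-principle) route for one direction and an $\e$-net / measurable-selection route for the other seem like the cleanest way to sidestep regularity issues, and the rest is routine once linearity is in hand.
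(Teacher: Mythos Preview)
Your proposal is correct and follows essentially the same route as the paper: establish that $U_g(t,\mu)=\int u(t,x)\,d\mu(x)$ with $u(t,x):=U_g(t,\delta_x)$, then identify $u$ as the viscosity solution of \eqref{HJBmfp} via standard stochastic control. The paper's proof is considerably terser---it simply asserts the linearity ``by the linearity with respect to $m$ of the functional inside the infimum'' without spelling out the two inequalities, whereas you supply the Jensen/mixing argument for one direction and the conditioning/superposition argument for the other; these are exactly the details one would fill in to justify the paper's one-line claim.
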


\begin{proof}
Fix $(t,\mu) \in [0,T]\times\mathcal{P}(\R)$. We notice that $U_g(t,\mu)$ is a mean field control problem with controls over the diffusion. By the linearity with respect to $m$ of the functional inside the infimum, we have that
$$U_g(t,\mu)=\int_{\R^d} \widetilde{U}_g(t,x) \mu(dx),$$
where $\widetilde{U}_g(t,x)$ solves (in the classical sense) the stochastic optimal control problem $\widetilde{U}_g(t,x)=U_g(t,\delta_x)$. However, by standard stochastic optimal control theory, $U_g(t,\delta_x)=u(t,x)$, where $u$ solves the Hamilton-Jacobi equation
\be\label{HJ for prop}
\begin{cases}
-\partial_s u+H\left(\frac{1}{2}D^2u\right)=0, &\text{ in } [t,T)\times \R,\\
 u(T,x)=g(x), &\text{ in }\R,
\end{cases}
\ee
in the viscosity sense.
\end{proof}

\vspace{2mm}

\noindent
We are now ready to prove Theorem \ref{main3}.

\begin{proof}
\textit{Step 1.} We will start by showing that the minimization problem \eqref{opt4} is admissible. The proof follows an idea from \cite{backhoff2017martingale}.
\vspace{1mm}

\noindent
Since $\mu\leq_{c}\nu$, by Strassen's theorem \cite{strassen1965existence}, there exists a filtered probability space $(\Omega,\mathcal{F},(\mathcal{F}_s)_{s\geq 0},\mathbb{P})$ and a discrete martingale $(X,Y)$ adapted to the filtration $(\mathcal{F}_0,\mathcal{F}_1)$ such that $\mathcal{L}^{\mathbb{P}}(X)= \mu$ and $\mathcal{L}^{\mathbb{P}}(Y)= \nu$. We consider $\pi$ to be the law of $(X,Y)$ and we write $\pi(dx,dy)=\pi_x(dy)\otimes \mu(dx)$. For any $x\in \R^d$, by Brenier's theorem \cite[Theorem 2.12]{villani2021topics}, there exists a convex function $\phi^{x}$ such that $(D\phi^x)_{\#}\gamma=\pi_x$. Now, let $B$ be a standard Brownian motion adapted to the filtration $(\mathcal{F}_s)_{s\geq 0}$ and
\be\nonumber
M_s:=\mathbb{E}\left[ D \phi^X (B_1)|\mathcal{F}_s \right].
\ee
It is obvious that $M_t$ is a continuous martingale and for any bounded continuous function $h$
\begin{align*}
    M_0&=\mathbb{E}[D\phi^X(B_1)|\mathcal{F}_0]=\mathbb{E}[D\phi^X(B_1)|X]=\mathbb{E}[\int D \phi^X(y)\gamma(dy)|X]=\mathbb{E}[\int z \pi_X(dz)|X]=X,\\
    \mathbb{E}[h(M_1)]&=\mathbb{E}[h(D\phi^X(B_1))|\mathcal{F}_1]=\mathbb{E}[h(D\phi^X(B_1))]=\int h(y)(D\phi^x)_{\#}\gamma(dy)\mu(dx)=\int h(y)\pi(dx,dy)\\  
    &=\int h(y)\nu(dy),
\end{align*}
hence $\mathcal{L}^{\mathbb{P}}(M_0)= \mu$ and $\mathcal{L}^{\mathbb{P}}(M_1)=\nu$. By the martingale representation theorem there exists a process $\sigma_s$ with $\int_{t_0}^t|\sigma_s|^2ds<+\infty$ almost surely for each $t\geq 0$, such that $dM_s=\sigma_s dB_s$. However, by the mimicking theorem \cite[Corollary 3.7]{brunick2013mimicking}, there exists a measurable function $\tilde{\sigma}(s,x)=\mathbb{E}[\sigma_s|M_s=x]$ such that the process solving the stochastic differential equation $d\tilde{M}_s=\tilde{\sigma}(s,\tilde{M}_s)dB_s$ on a possibly different probability space satisfies $\mathcal{L}^{\mathbb{P}}(\tilde{M}_s)=\mathcal{L}^{\mathbb{P}}(M_s)$ for every $s\geq 0$. Moreover, the curve  $t\mapsto m_t=\mathcal{L}^{\mathbb{P}}(\tilde{M}_s)$ satisfies \eqref{FP} in the weak sense with $b(s,x)=\tilde{\sigma}\tilde{\sigma}^{\top}(s,x)$, initial condition $m_0=\mu$ and terminal condition $\mu_1=\nu$. Therefore, the pair $s\mapsto \tilde{m}_s=m_{\frac{s-t_0}{T-t_0}}$ and $\tilde{\sigma}$ is an admissible candidate for the minimization problem \eqref{opt4}.
\vspace{2mm}

\noindent
\textit{Step 2.} We will now show \eqref{duality2}. For every admissible $b$, let $W$ be the measure having density $b$ with respect to $dm_t\otimes dt$. We consider the functional
    $$\mathcal{L}(g,(W,m))= \int_{T_0}^T\int L\left(\frac{dW}{dm_s\otimes ds}\right)dm_sds+\int g(x)dm_T(x)-\int g(x)d\nu(x),$$
    where $g$ is a Lipschitz function.
    We will show that $\mathcal{L}$ satisfies the conditions of the Von-Neumann theorem \ref{Von}. Indeed, $\mathcal{L}(g,(W,m))$ is concave with respect to $g$ and convex with respect to $(W,m)$. We now fix a $g_*$ and let $C_*$ such that $C_*>\sup_g\inf_{W,m}\mathcal{L}(g,(W,m))$. Then, if $(W_n,m_n)$ is a sequence such that $\mathcal{L}(g_*,(W^n,m^n))\leq C_*$, we discover that 
    $$\int_{T_0}^T\int L\left(\frac{dW^n}{dm^n_s\otimes ds}\right) dm^n_s(x)ds\leq \tilde{C},$$
    for some other constant $\tilde{C}$. Hence, by Proposition \ref{comp}, $(W^n,m^n)$ converges up to a subsequence to an admissible $(W,m)$ and, by passing to the limit, such that $\mathcal{L}(g_*, W,m)\leq C_*$. Finally, due to our argument from Proposition \ref{comp} once again, $(W,m)\mapsto \mathcal{L}(g, W,m)$ is lower-semicontinuous for every $g$, thus Theorem \ref{Von} is applicable. We have
    \begin{align*}
        U(T_0,\mu,\nu)&=\inf_{W,m}\sup_{g\in \text{Lip}}\mathcal{L}(g,W,m)\\
        &= \sup_{g\in\text{Lip}}\inf_{W,m}\bigg\{ \int_{T_0}^T\int L\left( \frac{dW}{dm_s\otimes ds}\right)dm_sds+\int g(x)dm_T(x)-\int g(x)d\nu(x)\bigg\}\\
        &=\sup_{g\in\text{Lip}}\bigg\{ U_g(T_0,\mu)-\int g(x)d\nu(x)\bigg\},
    \end{align*}
    where 
    \be\nonumber
U_g(T_0,\mu)=\inf_{W,m}\bigg\{ \int_{T_0}^T\int L\left( \frac{dW}{dm_s\otimes ds}\right) dm_sds+\int g(x)dm_T(x)\bigg\},
    \ee
    and where $(W,m)$ satisfies $\partial_tm=\frac{1}{2}\partial_{xx}W$ with $m(T_0)=\mu$ in the sense of distributions. By Proposition \ref{proof of HJBmfp}, $U_g$ satisfies \eqref{HJBmfp} and the proof of \eqref{duality2} is complete.
\end{proof}

\noindent
We will now use Theorem \ref{main3} to prove Theorem \ref{main2}. In the calculations below, whenever there is no confusion, we are using the notation $u(t)$ for the function $x\mapsto u(t,x)$ and $\int u(t)dm:=\int u(t,x)dm(x)$ for a probability measure $m$.

\begin{proof}[Proof of Theorem \ref{main2}]
For $T_0\geq T_1$ the result follows from Theorem \ref{main3}, so we may assume that $T_0<T_1$. Arguing as in the proof of Theorem \ref{main3}, we can show that there exist $b_1, b_2$ such that \eqref{5.1} holds, therefore \eqref{opt1} is admissible. We can once again apply the Von-Neumann theorem to get
$$
V(T_0,\mu,\mu_1,\mu_2)=\sup_{u_1,u_2}\inf_{b,m}\bigg\{ \int_{T_0}^{T_2}\int L(b(s,x))dm_s(x)ds +\int u_1dm_{T_1}+\int u_2 dm_{T_2}-\int u_1d\mu_1-\int u_2 d\mu_2\bigg\}.
$$
By minimizing on $(T_1,T_2]$ first and then on $[T_0,T_1)$, we get by Theorem \ref{main3}
\begin{align*}
V(T_0,\mu,\mu_1,\mu_2)&=\sup_{u_1,u_2}\inf_{b,m}\bigg\{ \int_{T_0}^{T_1}\int L(b(s,x))dm_s(x)ds+\int (u_1+u(T_1^+))dm_{T_1}-\int u_1 d\mu_1 -\int u_2 d\mu_2 \bigg\}\\
&= \sup_{u_1,u_2}\bigg\{ \int u(T_0,x)d\mu(x)-\int u_1(x)d\mu_1(x)-\int u_2(x)d\mu_2(x)\bigg\},
\end{align*}
where $u$ on the first and second line is viscosity solution of
$$
\begin{cases}
    -\partial_t u +H\left( \frac{D^2 u}{2}\right)=0,\\
    u(T_2)=u_2
\end{cases}
\text{ and   }\;\;\;
\begin{cases}
    -\partial_t u +H\left( \frac{D^2 u}{2}\right)=0,\\
    u(T_1)=u_1+u(T_1^+),
\end{cases}
$$
respectively. We conclude that $u$ satisfies \eqref{HJmain} in the sense of Definition \ref{defvisc}. 
\vspace{1mm}

\noindent
Finally, we show that the supremum in \eqref{duality} can be also taken over classical super-solutions. We suppose that $T_0<T_1$. The case $T_0>T_1$ can be proved similarly. We let
\be\label{set}
D:=\sup_{v}\bigg\{ \int v(T_0)d\mu_0 -\int v(T_2)d\mu_2- \text{\bf{1}}_{[0,T_1]}(T_0)\int (v(T_1^{-})-v(T_1^{+}))d\mu_1\bigg\},
\ee
where the supremum is taken over all smooth $v$ such that $-\partial_tv +H\left(\frac{D^2v}{2}\right)\leq 0$ when $t\in [T_0,T_1)\cup (T_1,T_2]$. Note that if $u$ is a classical super-solution of \eqref{HJmain} and $m$ satisfies \eqref{FP}, then if $t_1<t_2<T_1$ or $T_1<t_1<t_2$ we have
\begin{align}
\int u(t_2,x)dm_{t_2}(x)-\int u(t_1,x)dm_{t_1}(x)&\geq \int_{t_1}^{t_2}\int H\left( \frac{D^2u}{2}\right)dm_s ds+\int_{t_1}^{t_2}\int  \frac{D^2u}{2}b(s,x)\;dm_s(x)ds\nonumber\\
&\geq -\int_{t_1}^{t_2}\int L(b(s,x))dm_s(x)ds.\label{test101}
\end{align}
We use \eqref{test101} for $t_2=T_2$ and $t_1\downarrow T_1$ and $t_1=T_0$ and $t_2\uparrow T_1$, respectively, to get
\begin{align*}
\int_{T_1}^{T_2}\int L(b(s,x))dm_s(x)ds&\geq \int u(T_1^+,x)d\mu_1(x)-\int u_2(x)d\mu_2(x),\\
\int_{T_0}^{T_1}\int L(b(s,x))dm_s(x)ds&\geq \int u(T_0,x)d\mu_0(x) -\int u(T_1^-,x)d\mu_1(x).
\end{align*}
Adding these two relations yields
\begin{align*}
\int_{T_0}^{T_2}\int L(b(s,x))dm_s(x)ds&\geq \int u(T_0)d\mu_0 -\int u(T_2)d\mu_2+\int (u(T_1^+)-u(T_1^-))d\mu_1\\
&\geq \int u(T_0)d\mu_0 -\int u_2d\mu_2+\int u_1d\mu_1,
\end{align*}
hence $V(T_0,\mu,\mu_1,\mu_2)\geq D.$
\vspace{1mm}

\noindent
To prove the opposite inequality, let $u$ be a viscosity solution of \eqref{HJmain}. Due to Lemma \ref{approx}, there exists a uniformly bounded sequence of smooth super-solutions $u_n$ of \eqref{HJmain} (for possibly different $u_1,u_2$) such that $u_n\xrightarrow{n\rightarrow \infty} u$. By the bounded convergence theorem, this implies
\begin{align*}
    \lim_{n\rightarrow \infty}\bigg( \int u_n(T_0)d\mu_0 -\int u_n(T_2)d\mu_2- \text{\bf{1}}_{[0,T_1]}&(T_0)\int(u_n(T_1^{-})-u_n(T_1^{+}))d\mu_1 \bigg) \\
    &=\int u(T_0)d\mu_0 -\int u_2d\mu_2- \text{\bf{1}}_{[0,T_1]}(T_0)\int u_1d\mu_1,
\end{align*}
hence 
\begin{align*}
    D\geq \int u(T_0)d\mu_0 -\int u_2d\mu_2- \text{\bf{1}}_{[0,T_1]}(T_0)\int u_1d\mu_1,
\end{align*}
Taking supremum over $u$ and using \eqref{duality} we get $D\geq V(T_0,\mu_0,\mu_1,\mu_2)$, which finishes the proof.
\end{proof}
\vspace{4mm}

\noindent
Finally, we prove Corollary \ref{cor1}.

\begin{proof}
We assume that $t_0<T_1$, the other case being similar. Note that if $u$ satisfies \eqref{HJmain} in the classical sense and $m$ satisfies \eqref{FP}, then if $t_1<t_2<T_1$ or $T_1<t_1<t_2$ we have
\be\label{test}
\int u(t_2)dm_{t_2}-\int u(t_1)dm_{t_1}=\int_{t_1}^{t_2}\int H\left( \frac{D^2u}{2}\right)dm_s ds+\int_{t_1}^{t_2}\int \frac{D^2u}{2}b(s,x)\; dm_s(x)ds.
\ee
We first assume that $b=-H'\left( \frac{D^2u}{2}\right)$. Then, \eqref{test} becomes
$$
\int u(t_2)dm_{t_2}-\int u(t_1)dm_{t_1}=-\int_{t_1}^{t_2}\int L(b(s,x))dm_s(x)ds.
$$
Therefore, we deduce
\begin{align*}
\int_{T_1}^{T_2}\int L(b(s,x))dm_s(x)ds&=\int u(T_1^+)d\mu_1 -\int u_2 d\mu_2\\
\int_{T_0}^{T_1}\int L(b(s,x))dm_s(x)ds&=\int u(T_0)d\mu_0 -\int u(T_1^-)d\mu_1.
\end{align*}
We add the last two relations and since $u(T_1^+)-u(T_1^-)=-u_1$, it follows that 
$$\int_{T_0}^{T_2}\int L(b(s,x))dm_s(x)ds= \int u(T_0)d\mu_0 -\int u_1d\mu_1 -\int u_2d\mu_2,$$
which means that the supremum in \eqref{duality} is achieved.
\vspace{1mm}

\noindent
For the other direction we assume that the supremum in \eqref{duality} is achieved at $u_1^*,u_2^*\in C^2_b$. We may note that, due to standard regularity results and the strong ellipticity assumption on $H$, the solution $u(t,x)$ of \eqref{HJmain} is classical when $t\in [T_0,T_1)$ and $t\in (T_1,T_2]$. Then, again by \eqref{test}, by using the inequality $H(\frac{D^2u}{2})+\frac{D^2u}{2}b\geq -L(b)$ we have
\begin{align}
\int_{T_1}^{T_2}\int L(b(s,x))dm_s(x)ds&\geq \int u(T_1^+)d\mu_1 -\int u_2^* d\mu_2 \label{3.1}\\
\int_{t_0}^{T_1}\int L(b(s,x))dm_s(x)ds&\geq \int u(T_0)d\mu_0 -\int u(T_1^-)d\mu_1.\label{3.2}
\end{align}
We add \eqref{3.1} and \eqref{3.2} to deduce
$$\int_{T_0}^{T_2}\int L(b(s,x))dm_s(x)ds \geq  \int u(T_0)d\mu_0 -\int u_1^*d\mu_1 -\int u_2^*d\mu_2.$$
However, since the supremum in \eqref{duality} is achieved, the last holds as an equality, hence \eqref{3.1} and \eqref{3.2} are equalities as well, which implies that $H(\frac{D^2u(s,x)}{2})+\frac{D^2u(s,x)}{2}b(s,x)= -L(b(s,x))$. It is now straightforward to show that $b(s,x)=-H'\left(\frac{D^2u(s,x)}{2}\right)$.
\end{proof}

\section{Martingale Schr\"odinger bridges, Theorems \ref{main4}, \ref{main5}}

\noindent
In this section, we give the proofs of Theorems \ref{main4} and \ref{main5}.

\begin{proof}[Proof of Theorem \ref{main4}]
We denote ${\bf W}$ the 2-dimensional Brownian motion $(W,W^{\perp})$. Let $\mathbb{P}\ll \mathbb{P}_0$ be a measure in $\mathcal{C}_{t_0}(\mu_1,\mu_2)$. Then, by Girsanov's theorem there exists an adapted process $\bm \alpha_t=(\alpha^1_t,\alpha^2_t)$ such that, for any $t\in [t_0,T_2]$, $\frac{d\mathbb{P}}{d\mathbb{P}_0}\bigg|_{\mathcal{F}_t}=\text{exp}\left( \int_{t_0}^t \bm\alpha_sd {\bf W}_s -\frac{1}{2}\int_{t_0}^t |\bm \alpha_s|^2ds\right)$ with $\mathbb{E}^{\mathbb{P}}\left[ \int_{t_0}^{T_2}|\bm \alpha _t|^2dt\right]<\infty$. Since $X_t$ is $\mathbb{P}_0$-martingale, again by Girsanov $X_t-\int_{t_0}^t\alpha^1_s\sigma(X_s,Y_s)ds$ is a $\mathbb{P}$-martingale. However, $\mathbb{P}\in\mathcal{C}(\mu_1,\mu_2)$, hence $X_t$ is a $\mathbb{P}$-martingale, which implies that $\int_{t_0}^t \alpha^1_s \sigma(X_s,Y_s)ds$ is a $\mathbb{P}$-martingale. We can now easily deduce that $\alpha^1=0, \mathbb{P}$-a.s and, therefore, again by Girsanov's theorem, the dynamics \eqref{SVM} can be written as
\be \label{newSDE}
\begin{cases}
dX_t=\sigma(X_t,Y_t)d\tilde{W}_t,\\
dY_t=\left(b(X_t,Y_t)+ \tau_2(X_t,Y_t)\alpha^2_t\right)dt+\tau_1(X_t,Y_t)d\tilde{W}_t+\tau_2(X_t,Y_t)d\tilde{W}_t^{\perp},
\end{cases}\ee
where $\tilde{W}, \tilde{W}^{\perp}$ are independent $\mathbb{P}$-Brownian motions. We have
\begin{align*}
    H(\mathbb{P}|\mathbb{P}_0)=\mathbb{E}^{\mathbb{P}}\left[\int_{t_0}^{T_2}\alpha^2dW_t^{\perp}-\frac{1}{2}\int_{t_0}^{T_2}|\alpha^2_s|^2ds\right]=\frac{1}{2}\int_{t_0}^{T_2}\mathbb{E}^{\mathbb{P}}\left[ |\alpha^2_s|^2\right]ds.\label{entropy1}
\end{align*}
Furthermore, by the mimicking theorem \cite[Corollary 3.6]{brunick2013mimicking}, there exists a measurable function given by $\alpha_2(t,x,y):=\mathbb{E}^{\mathbb{P}}[\alpha^2_t|X_x=,Y_t=y]$ such that for any $t\in [0,T_2]$, $\mathcal{L}^{\mathbb{P}}(X_t,Y_t)=m_t:=\mathcal{L}^{\mathbb{P}}(\tilde{X}_t,\tilde{Y}_t)$, where $(X_t,Y_t)$ satisfies \eqref{newSDE} and $(\tilde{X}_t,\tilde{Y}_t)$ satisfies
\be \label{newSDEm}
\begin{cases}
d\tilde{X}_t=\sigma(\tilde{X}_t,\tilde{Y}_t)d\tilde{W}_t,\\
d\tilde{Y}_t=\left(b(\tilde{X}_t,\tilde{Y}_t)+ \tau_2(\tilde{X}_t,\tilde{Y}_t)\alpha_2(t,\tilde{X}_t,\tilde{Y}_t)\right)dt+\tau_1(\tilde{X}_t,\tilde{Y}_t)d\tilde{W}_t+\tau_2(\tilde{X}_t,\tilde{Y}_t)d\tilde{W}_t^{\perp}.
\end{cases}\ee
Thus, the left side of \eqref{dual} can be written as
\begin{align}
\inf_{\mathbb{P}\in\mathcal{C}_{t_0}(\mu_1,\mu_2)}&H(\mathbb{P}|\mathbb{P}_0)=\inf_{\substack{\alpha_2\in L^2(dm_t\otimes dt)\\ X_{T_1}\sim \mu_1, \;X_{T_2}\sim \mu_2}}\bigg\{ \frac{1}{2}\int_{t_0}^{T_2}\mathbb{E}^{\mathbb{P}}\left[ |\alpha_2(t,\tilde{X}_t,\tilde{Y}_t)|^2\right]dt\bigg\}\nonumber\\
&=\inf_{\alpha_2\in L^2(dm_t\otimes dt)}\sup_{u_2, u_2\in Lip}\bigg\{ \frac{1}{2}\int_{t_0}^{T_2}\mathbb{E}^{\mathbb{P}}\left[ |\alpha_2(t,\tilde{X}_t,\tilde{Y}_t)|^2\right]dt+{\bf 1}_{[0,T_1]}(t_0)\int u_1dm_{T_1}\nonumber\\
&\hspace{6cm}+\int u_2dm_{T_2}-{\bf 1}_{[0,T_1]}(t_0)\int u_1d\mu_1-\int u_2 d\mu_2\bigg\}.\label{int1}
\end{align}
By Proposition \ref{comp2} and a similar argument as in the proof of Theorem \ref{main3} (Step 2),
%\footnote{The main difference is that we have to apply the compactness result from \cite[Proposition 1.2]{daudin2023optimal} for Fokker-Planck equations where the control is on the drift and are of the form \eqref{FPdrift}.}, 
we can check that the function
\begin{align*}
\mathcal{L}((W,(m_t)_{t\in[t_0,T]}),(u_1,u_2))=\frac{1}{2}\int_{t_0}^{T_2}\left|\frac{dW}{dm_t\otimes dt}\right|^2dm_tdt+{\bf 1}&_{[0,T_1]}(t_0)\int u_1dm_{T_1}+\int u_2dm_{T_2}\\
&-{\bf 1}_{[0,T_1]}(t_0)\int u_1d\mu_1-\int u_2 d\mu_2
\end{align*}
satisfies the conditions of the Von-Neumann theorem (Theorem \ref{Von}) and hence the infimum in \eqref{int1} is a minimum and we can change the order of the infimum and the supremum to find
\begin{align*}
\inf_{\mathbb{P}\in\mathcal{C}_{t_0}(\mu_1,\mu_2)}H(\mathbb{P}|\mathbb{P}_0)=\sup_{u_1,u_2\in Lip}\bigg\{ u(t_0,X_{t_0},Y_{t_0})-\int u_2d\mu_2-{\bf 1}_{[0,T_1]}(t_0)\int u_1d\mu_1\bigg\}\label{dual3},
\end{align*}
where $u(t,x,y),\; t\in [0,T_2]$ is given by
\be\label{mfcu}
u(t,x,y)=\inf_{\alpha_2\in L^2(dm_t\otimes dt)}\bigg\{ \frac{1}{2}\int_t^{T_2}\int |\alpha_2(s,x,y)|^2dm_s(x,y)ds+{\bf 1}_{[0,T_1]}(t)\int u_1dm_{T_1}+\int u_2 dm_{T_2}\bigg\},
\ee
where $m_s$ is the law at times $s$ of the process $(X_s,Y_s)$ satisfying \eqref{newSDEm}, as before, with $(X_t,Y_t)=(x,y)$. We can now check that $u$ satisfies \eqref{HJS}, which finishes the proof of \eqref{dual}. Indeed, if $t\in (T_1,T_2]$, then by well known results of stochastic optimal control theory, $u$ satisfies \eqref{HJS} in the viscosity sense in that interval. If $t\in [0,T_1]$, then we break the integral in \eqref{mfcu} and minimize first over $(T_1,T_2]$ and then over $[t,T_1]$ to get
\begin{align*}
u(t,x,y)=\inf_{\substack{\alpha_2\in L^2(dm_s\otimes ds)\\ s\in [t,T_1]}}\bigg\{ \int_t^{T_1}\int \frac{|\alpha_2|^2}{2}dm_sds+\int (u_1(x)+u(T_1^+,x',y'))dm_{T_1}(x',y')\bigg\}.
\end{align*}
Once again, by standard results of stochastic optimal control theory (here the terminal condition is $u(T_1)=u_1+u(T_1^+)$), $u(t,\cdot,\cdot)$ satisfies \eqref{HJS} whenever $t\in [0,T_1)$.
\vspace{2mm}

\noindent
Finally, if the supremum in \eqref{dual} is achieved, we observe that due to Corollary \ref{reg1}, the solution $u^*(t,x)$ of \eqref{HJS} (with $u_1=u_1^*$ and $u_2=u_2^*$) is bounded and Lipschitz uniformly in $t\in [0,T_1)\cap (T_1,T_2]$. In particular, $\partial_y u^*(t,x,y)$ exists a.e. In addition, $\partial_y u^*(T_1,x,y)=\partial_y( u_1^*(x)+u^*(T_1^+,x,y))=\partial_y u^*(T_1^+,x,y)$, hence $\partial_y u^*(t,x,y)$ can be defined for all $t\in [0,T_2]$. To conclude, we note that the optimal $\alpha_2$ in \eqref{mfcu} is given by $\alpha_2(t,x,y)=-\tau_2(x,y)\partial_y u^*(t,x,y)$, which, combined with $\alpha^1\equiv 0$, implies \eqref{optimalP}.

\end{proof}

\begin{remark}
(i) If they exist, $u_1^*,u_2^*$ are called \textit{Schr\"odinger potentials}.
\vspace{1mm}

\noindent
(ii) Assume for simplicity that $t_0=0$. If the solution $u^*$ of \eqref{HJS} is classical, then we can write \eqref{optimalP} as
\be\label{newoptimalP}
\frac{d\mathbb{P}^*}{d\mathbb{P}_0}\bigg|_{\mathcal{F}_{T_2}}=e^{u^*(0,X_0,Y_0)-u_1^*(X_{T_1})-u_2^*(X_{T_2})-\int_0^{T_2} \Delta_t dW_t},\ee
where $\Delta_t=-\sigma(X_t,Y_t) \partial_x u^*(t,X_t,Y_t)-\tau_1(X_t,Y_t)\partial_y u^*(t,X_t,Y_t)$. Indeed, let $Z_t=(X_t,Y_t)$. We have by It\^o's formula, that whenever $0\leq t_1<t_2<T_1$ or $T_1<t_1<t_2\leq T_2$
\be\label {test2}
u^*(t_2,Z_{t_2})-u^*(t_1,Z_{t_1})=\frac{1}{2}\int_{t_1}^{t_2}\tau_2^2|\partial_yu^*|^2dt+\int_{t_1}^{t_2}\tau_2\partial_y u^*dW_t^{\perp}+\int_{t_1}^{t_2}(\sigma \partial_x u^*+\tau_1\partial_y u^*) dW_t.
\ee
We set $t_1=0, t_2\rightarrow T_1^-$ and $t_1\rightarrow T_1^+, t_2=T_2$ in \eqref{test2} and we add to discover
\begin{align*}
-u_2^*(X_{T_2})-u_1^*(X_{T_1})+\int_0^{T_2}(\sigma\partial_x u^*+\tau_1\partial_y u^*)dW_t+&u^*(0,Z_0)\\
&=-\int_0^{T_2}\tau_2 \partial_y u^*dW^{\perp}_t-\frac{1}{2}\int_0^{T_2}\tau_2^2|\partial_y u^*|^2dt,
\end{align*}
which gives \eqref{newoptimalP} from \eqref{optimalP}. In this case, $-u_1^*(X_{T_1})-u_2^*(X_{T_2})-\int_0^{T_2} \Delta_t dW_t$ is called \textit{Schr\"odinger portfolio}. We also notice that \eqref{newoptimalP} coincides with formula for the optimizer derived in \cite{guyon2022dispersion} in the case where there is no VIX constraint.
\vspace{1mm}

\noindent
(iii) Problem \eqref{minimization} is not always admissible, therefore in Theorem \ref{main4} the admissibility is included as an assumption. Indeed, if $\sigma(x,y):=x$ in \eqref{SVM}, then $X_t$ is a geometric Brownian motion and
for any $\mathbb{P}\in \mathcal{C}_{t_0}(\mu_1,\mu_2)$ such that $\mathbb{P}\ll \mathbb{P}_0$, the distributions $\mathcal{L}^{\mathbb{P}}(X_{T_1})$, $\mathcal{L}^{\mathbb{P}}(X_{T_2})$ do not change.

%for any $\mathbb{P}\in \mathcal{C}(\mu_1,\mu_2)$ such that $\mathbb{P}\ll \mathbb{P}_0$, the process $(X_t)_{t\in[0,T_2]}$ is a martingale and the distribution $\mathcal{L}^{\mathbb{P}}((X_t)_{t\in[0,T_2]})$ is absolutely continuous with respect to the distribution of the Brownian motion; hence, its quadratic variation $\langle X\rangle_t= t$, $\mathbb{P}$-a.s. By L\'evy's characterization of Brownian motion, $X_t$ must be a $\mathbb{P}$-Brownian motion, so $\mathcal{L}^{\mathbb{P}}(X_{T_1})=\mathcal{N}(0,T_2)$. This means that $\mu_1=\mathcal{N}(0,T_2)$, which cannot be always the case.
\end{remark}
\vspace{4mm}

%\subsection{Martingale Schr\"odinger bridges with VIX constraint}
\noindent
To show Theorem \ref{main5}, we start by proving two auxiliary results that will be useful in the proof.
\begin{proposition}\label{deltacont}
    Suppose that $b,\sigma$ satisfy (A3) and $\sigma(x,y)=x\tilde{\sigma}(y)$ for some bounded and Lipschitz function $\tilde{\sigma}:\R\rightarrow \R$. For $x,y>0$ and $\delta\in \R$, we consider the optimal control problem
\be\label{optimalc'}
\begin{split}
u(T_1,x,&\;y;\delta)\\
&=\inf_{\alpha}\bigg\{ \frac{1}{2}\int_{T_1}^{T_2}\int|\alpha(t,x,y)|^2dm_t(x,y)dt+\int u_2(x)dm_{T_2}(x)-\delta \int \log(x)dm_{T_2}(x)\bigg\},
\end{split}
\ee
where the infimum is taken over all square integrable controls $\alpha$ and $(m_t)_{t\in [T_1,T_2]}$ is the distribution of $(\tilde{X}_t,\tilde{Y}_t)$ satisfying 
\be \label{controlledSDE}
\begin{cases}
d\tilde{X}_t=\sigma(\tilde{X}_t,\tilde{Y}_t)d\tilde{W}_t,\\
d\tilde{Y}_t=\left( b(\tilde{X}_t,\tilde{Y}_t)+ \tau_2(\tilde{X}_t,\tilde{Y}_t)\alpha(t,\tilde{X}_t,\tilde{Y}_t)\right)dt+\tau_1(\tilde{X}_t,\tilde{Y}_t) d\tilde{W}_t+\tau_2(\tilde{X}_t,\tilde{Y}_t)d\tilde{W}_t^{\perp},\\
\tilde{X}_{T_1}=x,\;\;\tilde{Y}_{T_1}=y,
\end{cases}\ee
for $t\in [T_1,T_2]$ and $\tilde{W},\tilde{W}^{\perp}$ two independent Brownian motions.
%$$\begin{cases}
%dX_t=X_tY_tdW_t,\\
%dY_t=\left( b(Y_t)+ \sigma(Y_t)\sqrt{1-\rho^2}\alpha(t,X_t,Y_t)\right)dt+\sigma(Y_t)\left(\rho dW_t+\sqrt{1-\rho^2}dW_t^{\perp}\right),\\
%X_{T_1}=x,\; Y_{T_1}=y,
%\end{cases}$$
%for two independent Brownian motions $W,W^\perp$ and $\rho\in (-1,1)$. 
Then, the map $\delta\mapsto u(T_1,x,y;\delta)$ is continuous.
\end{proposition}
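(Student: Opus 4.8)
The plan is to exploit the fact that the control problem \eqref{optimalc'} depends on $\delta$ only through a single affine term, so that $\delta\mapsto u(T_1,x,y;\delta)$ is an infimum of affine functions of $\delta$, hence concave; since it will also turn out to be finite on all of $\mathbb{R}$, it is then automatically locally Lipschitz, in particular continuous.

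First I would make the affine structure explicit. Fix $x,y>0$, fix a square-integrable control $\alpha$, and let $(m_t)_{t\in[T_1,T_2]}$ be the associated law of the solution of \eqref{controlledSDE}; note that both the admissible class of controls and the dynamics \eqref{controlledSDE} are independent of $\delta$. The quantity inside the infimum in \eqref{optimalc'} can be written as $J_\alpha(\delta)=A_\alpha-\delta B_\alpha$, where $A_\alpha:=\tfrac{1}{2}\int_{T_1}^{T_2}\int|\alpha|^2\,dm_t\,dt+\int u_2\,dm_{T_2}$ and $B_\alpha:=\int\log(x)\,dm_{T_2}(x)$ are independent of $\delta$. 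Hence $u(T_1,x,y;\delta)=\inf_\alpha J_\alpha(\delta)$ is an infimum over a $\delta$-independent family of affine functions of $\delta$, and therefore concave in $\delta$.

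Next I would check that $u(T_1,x,y;\delta)\in\mathbb{R}$ for every $\delta$. The upper bound follows by taking $\alpha\equiv 0$, using that $u_2$ is bounded and that $|B_0|<\infty$ (see below). For the lower bound the key point is a bound on $B_\alpha$ uniform in $\alpha$: since $dX_t=X_t\tilde{\sigma}(Y_t)\,d\tilde W_t$, It\^o's formula gives $\log X_{T_2}=\log x+\int_{T_1}^{T_2}\tilde{\sigma}(Y_s)\,d\tilde W_s-\tfrac{1}{2}\int_{T_1}^{T_2}\tilde{\sigma}^2(Y_s)\,ds$ for every control, and because $\tilde{\sigma}$ is bounded (by (A3), as strengthened in (A4)) the stochastic integral is a true martingale, whence $|B_\alpha|=|\mathbb{E}[\log X_{T_2}]|\le|\log x|+\tfrac{1}{2}\|\tilde{\sigma}\|_\infty^2(T_2-T_1)=:C_x$, a bound independent of $\alpha$. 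Crucially $X$ does not feel the control directly --- it enters only the drift of $Y$, and $\sigma(x,y)=x\tilde{\sigma}(y)$ --- so this estimate is genuinely control-free. Combining with $A_\alpha\ge-\|u_2\|_\infty$ (the energy term being nonnegative and $m_{T_2}$ a probability measure), we obtain $u(T_1,x,y;\delta)\ge-\|u_2\|_\infty-|\delta|\,C_x>-\infty$.

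Finally, a concave function $\mathbb{R}\to\mathbb{R}$ taking only finite values is locally Lipschitz on $\mathbb{R}$, hence continuous, which is the assertion. I expect the only genuinely non-routine point to be this uniform-in-$\alpha$ control of $B_\alpha$ --- that is, ensuring that the slope $\int\log(x)\,dm_{T_2}(x)$ of each $J_\alpha$ is finite and locally bounded --- which is exactly where the boundedness of $\tilde{\sigma}$ is used; the remaining ingredients (well-posedness of \eqref{controlledSDE}, finiteness of $A_\alpha$ for square-integrable $\alpha$, measurability of the feedback controls) are standard under (A3).
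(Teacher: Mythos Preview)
Your proof is correct and takes a genuinely different (and arguably cleaner) route than the paper's. The paper argues upper semicontinuity directly from ``infimum of a family of continuous functions'', and then proves lower semicontinuity by hand: for each $\delta$ it picks an optimal control $\alpha^{\delta}$, uses it as a competitor at $\delta_0$, and bounds the resulting difference $(\delta_0-\delta)\mathbb{E}[-\log\tilde X_{T_2}]$ via the same It\^o computation you use. You instead observe that $\delta\mapsto u(T_1,x,y;\delta)$ is an infimum of genuine affine functions, hence concave, and that finiteness on all of $\R$ then gives local Lipschitz continuity for free. Both arguments rest on the same essential estimate --- the uniform-in-$\alpha$ bound $|\mathbb{E}[\log \tilde X_{T_2}]|\le|\log x|+\tfrac12\|\tilde\sigma\|_\infty^2(T_2-T_1)$ coming from boundedness of $\tilde\sigma$ --- but your packaging avoids choosing optimal controls and treats both semicontinuities at once. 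One small remark: the boundedness of $\tilde\sigma$ is part of the proposition's own hypothesis, so you need not appeal to (A4).
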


\begin{proof}
Let $\delta_0\in \R$. We will show that $\delta\mapsto u(T_1,x,y;\delta)$ is continuous at $\delta_0$.
\vspace{1mm}

\noindent
Since this map is obtained as the infimum of a family of continuous in $\delta$ functions, we deduce that it is upper semicontinuous in $\delta$, hence
\be\label{uppersemi}
\limsup_{\delta\rightarrow \delta_0}u(T_1,x,y;\delta)\le u(T_1,x,y;\delta_0).
\ee
We will now show the lower semicontinuity. For $\delta\in \R$, we let $\alpha^{\delta}$ be the optimal control in \eqref{optimalc'} and we denote by $m_t^{\delta}$ the distribution of $(X_t,Y_t)$ when $\alpha=\alpha^{\delta}$.

\noindent
For $\delta\in \R$, by using $\alpha^{\delta}$ as a control we have
\begin{align}
u(T_1,x,y;\delta_0)-u(T_1,x,y;\delta)&\leq (\delta_0-\delta)\int -\log(x)dm_{T_2}^{\delta}(x,y)=(\delta_0-\delta)\mathbb{E}[-\log(\tilde{X}_{T_2})]\nonumber\\
&=(\delta_0-\delta)\left(-\log(x)+\mathbb{E}\left[\int_{T_1}^{T_2}\tilde{\sigma}^2(\tilde{Y}_t)dt\right] \right)\nonumber\\
&\le |\delta_0-\delta|\left(-\log(x)+(T_2-T_1)\|\tilde{\sigma}^2\|_{\infty}\right).\nonumber
%=(\delta_0-\delta)\left( G(x)+\frac{1}{2}\int_{T_1}^{T_2}\mathbb{E}[Y_s^2]ds\right)\nonumber
%&\stackrel{\eqref{importantest2}}{\leq} (\delta_0-\delta)\left( G(x) + \frac{Cu(T_1,x,y;\delta)-C\delta G(x)}{C\delta+1} \right).\nonumber
\end{align}
By sending $\delta\rightarrow \delta_0$, 
%and using \eqref{uppersemi} 
we deduce
$$\liminf_{\delta\rightarrow \delta_0}u(T_1,x,y;\delta)\geq u(T_1,x,y;\delta_0),$$
which is the desired lower semicontinuity.
\end{proof}
\vspace{3mm}

\noindent
In the following Proposition we show that the minimum in \eqref{minvix} can be attained.
\begin{proposition}\label{martingality}
    %\red{perhaps add this as a separate assuption} Suppose that $|x|$ is uniformly integrable with respect to $\mu_2$ and $|\log x|$ is uniformly integrable with respect to $\mu_1$ and $\mu_2$. Then 
    Under the assumptions of Theorem \ref{main5}, there exists a $\mathbb{P}^*\in\mathcal{C}_{t_0}(\mu_1,\mu_2)$ such that $\mathcal{L}^{\mathbb{P}^*}(V_{\mathbb{P}^*})\le_{c,l}\mu_3$ and $D'_{\mathbb{P}_0}=H(\mathbb{P}^*|\mathbb{P}_0)$.
\end{proposition}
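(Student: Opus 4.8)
The plan is to apply the direct method of the calculus of variations. Since \eqref{minvix} with the VIX' constraint is admissible, fix a minimizing sequence $(\mathbb{P}_n)_{n\in\mathbb{N}}\subset\mathcal{C}_{t_0}(\mu_1,\mu_2)$ with $\mathcal{L}^{\mathbb{P}_n}(V_{\mathbb{P}_n})\le_{c,l}\mu_3$ and $H(\mathbb{P}_n|\mathbb{P}_0)\to D'_{\mathbb{P}_0}$. As $\sup_n H(\mathbb{P}_n|\mathbb{P}_0)<\infty$, we may write $d\mathbb{P}_n=f_n\,d\mathbb{P}_0$, and the bound $\mathbb{E}^{\mathbb{P}_0}[f_n\log f_n]\le C$ together with the superlinearity of $t\mapsto t\log t$ makes $(f_n)_n$ uniformly integrable by the de la Vall\'ee-Poussin criterion; hence, by the Dunford-Pettis theorem, $(f_n)_n$ is relatively weakly compact in $L^1(\mathbb{P}_0)$. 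Passing to a subsequence, $f_n\rightharpoonup f$ weakly in $L^1(\mathbb{P}_0)$; testing against the constant function $1$ gives $f\ge0$ with $\int f\,d\mathbb{P}_0=1$, so $\mathbb{P}^*:=f\,d\mathbb{P}_0$ is a probability measure and, by Lemma~\ref{condint}(i), $\mathbb{P}_n\rightharpoonup\mathbb{P}^*$ weakly. By the weak-$L^1$ lower semicontinuity of the convex functional $g\mapsto\mathbb{E}^{\mathbb{P}_0}[g\log g]$ we get $H(\mathbb{P}^*|\mathbb{P}_0)\le\liminf_n H(\mathbb{P}_n|\mathbb{P}_0)=D'_{\mathbb{P}_0}$. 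It therefore suffices to show that $\mathbb{P}^*$ is feasible, for then $H(\mathbb{P}^*|\mathbb{P}_0)\ge D'_{\mathbb{P}_0}$ by definition of the infimum and the proof is complete.

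For feasibility I would first dispatch the two ``hard constraint'' pieces. The marginal constraints pass to the limit because $X_{T_i}$ is a continuous functional on path space and $\mathbb{P}_n\rightharpoonup\mathbb{P}^*$, so $\mathcal{L}^{\mathbb{P}^*}(X_{T_i})$ is the weak limit of the constant sequence $\mathcal{L}^{\mathbb{P}_n}(X_{T_i})\equiv\mu_i$, i.e.\ $\mathcal{L}^{\mathbb{P}^*}(X_{T_i})=\mu_i$. For the martingale property of $X$ under $\mathbb{P}^*$, the key is that $X_{T_2}\sim\mu_2$ under \emph{every} $\mathbb{P}_n$ and $X_t=\mathbb{E}^{\mathbb{P}_n}[X_{T_2}\mid\mathcal{F}_t]$, which makes $\{X_t:n\in\mathbb{N}\}$ uniformly integrable with respect to $\{\mathbb{P}_n\}$ (write $\mathbb{E}^{\mathbb{P}_n}[X_t\ind_{\{X_t>N\}}]=\mathbb{E}^{\mathbb{P}_n}[X_{T_2}\ind_{\{X_t>N\}}]$ and bound this by $\int_{\{x>M\}}x\,d\mu_2+MN^{-1}\int x\,d\mu_2$, uniformly in $n$ since $\mu_2$ is fixed). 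Truncating $X$ at level $N$, using weak convergence of $\mathbb{P}_n$ against the bounded continuous functionals $(X_t\wedge N-X_s\wedge N)\psi$ for $\psi$ bounded, continuous and $\mathcal{F}_s$-measurable, and letting $N\to\infty$ (monotone/dominated convergence on the $\mathbb{P}^*$ side, using $\mathbb{E}^{\mathbb{P}^*}[X_t]\le\int x\,d\mu_2<\infty$) yields $\mathbb{E}^{\mathbb{P}^*}[(X_t-X_s)\psi]=0$. Hence $\mathbb{P}^*\in\mathcal{C}_{t_0}(\mu_1,\mu_2)$.

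The delicate point, and the one I expect to be the main obstacle, is passing the VIX' constraint to the limit, because $V_{\mathbb{P}}$ depends on $\mathbb{P}$ through a conditional expectation rather than being a fixed functional of the path. I would handle it as follows. Since $X$ is a $\mathbb{P}_n$-martingale, Girsanov's theorem (exactly as in the proof of Theorem~\ref{main4}) forces the drift correction on $X$ to vanish, so $dX_t=X_t\tilde{\sigma}(Y_t)\,dW^{(\mathbb{P}_n)}_t$ under $\mathbb{P}_n$ and It\^o's formula gives $-\log X_{T_2}+\log X_{T_1}=\tfrac12 A-\int_{T_1}^{T_2}\tilde{\sigma}(Y_t)\,dW^{(\mathbb{P}_n)}_t$ with $A:=\int_{T_1}^{T_2}\tilde{\sigma}^2(Y_t)\,dt$. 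Because $\tilde{\sigma}$ is bounded by (A4), the stochastic integral is a true martingale, so $V_{\mathbb{P}_n}=\tfrac12\mathbb{E}^{\mathbb{P}_n}[A\mid\mathcal{F}_{T_1}]$ is $\mathcal{F}_{T_1}$-measurable and uniformly bounded by $\tfrac12(T_2-T_1)\|\tilde{\sigma}\|_{\infty}^2$, and the same identity holds for $\mathbb{P}^*$ once $X$ is known to be a $\mathbb{P}^*$-martingale. Setting $a_n:=\mathbb{E}^{\mathbb{P}_0}[Af_n\mid\mathcal{F}_{T_1}]$ and $b_n:=\mathbb{E}^{\mathbb{P}_0}[f_n\mid\mathcal{F}_{T_1}]$, the Bayes formula gives $V_{\mathbb{P}_n}=a_n/(2b_n)$ $\mathbb{P}_n$-a.s., and Lemma~\ref{condint}(iii) (applied with the bounded random variable $V=A$, and with $V=1$) yields $a_n\rightharpoonup a:=\mathbb{E}^{\mathbb{P}_0}[Af\mid\mathcal{F}_{T_1}]$ and $b_n\rightharpoonup b:=\mathbb{E}^{\mathbb{P}_0}[f\mid\mathcal{F}_{T_1}]$ weakly in $L^1(\mathbb{P}_0)$, with $V_{\mathbb{P}^*}=a/(2b)$ $\mathbb{P}^*$-a.s. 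Now, for convex, lower-bounded $h:[0,+\infty)\to\mathbb{R}$, say $h\ge-c$, the perspective function $g_h(p,q):=q\,h\!\left(p/(2q)\right)$ is jointly convex and lower semicontinuous on $\{p\ge0,\ q\ge0\}$ and satisfies $g_h(p,q)+cq\ge0$; since $\int b_n\,d\mathbb{P}_0=\int b\,d\mathbb{P}_0=1$, the standard weak-$L^1$ lower semicontinuity of convex integral functionals gives
\[
\mathbb{E}^{\mathbb{P}^*}[h(V_{\mathbb{P}^*})]=\mathbb{E}^{\mathbb{P}_0}\!\left[b\,h\!\left(\tfrac{a}{2b}\right)\right]\le\liminf_n\mathbb{E}^{\mathbb{P}_0}\!\left[b_n\,h\!\left(\tfrac{a_n}{2b_n}\right)\right]=\liminf_n\mathbb{E}^{\mathbb{P}_n}[h(V_{\mathbb{P}_n})]\le\int h\,d\mu_3,
\]
which is exactly $\mathcal{L}^{\mathbb{P}^*}(V_{\mathbb{P}^*})\le_{c,l}\mu_3$. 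Combined with the previous paragraph, this shows $\mathbb{P}^*$ is feasible, hence $D'_{\mathbb{P}_0}=H(\mathbb{P}^*|\mathbb{P}_0)$.

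What remains to be filled in is routine: the identification $V_{\mathbb{P}}=\tfrac12\mathbb{E}^{\mathbb{P}}[A\mid\mathcal{F}_{T_1}]$ together with $\alpha^1\equiv0$ (transcribed from the proof of Theorem~\ref{main4}), the verification that $g_h$ is a normal convex integrand extending lower semicontinuously across $\{q=0\}$ via its recession function, so that the semicontinuity theorem applies, and the elementary uniform-integrability estimates behind the martingale property.
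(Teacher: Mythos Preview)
Your argument is correct and follows the same overall scheme as the paper: a minimizing sequence, Dunford--Pettis compactness of the densities in $L^1(\mathbb{P}_0)$, weak lower semicontinuity of the entropy, passage of the marginal and martingale constraints via uniform integrability, and the VIX' constraint via the Bayes formula combined with weak lower semicontinuity of the convex perspective functional $(a,b)\mapsto \mathbb{E}^{\mathbb{P}_0}[b\,h(a/b)]$.

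The one genuine difference is in how you obtain the weak $L^1$ convergence of the ``numerator'' in the Bayes formula. The paper works directly with the random variable $W=-\log X_{T_2}+\log X_{T_1}$ and invokes the hypothesis $\int (\log x)^2\,d(\mu_1+\mu_2)<\infty$ to make $W$ uniformly integrable with respect to $(\mathbb{P}_n)_n$, so that Lemma~\ref{condint}(iii) applies. You instead use the Girsanov/It\^o identity (valid since each $\mathbb{P}_n$ has finite entropy and $X$ is a $\mathbb{P}_n$-martingale) to rewrite $V_{\mathbb{P}_n}=\tfrac12\,\mathbb{E}^{\mathbb{P}_n}[A\mid\mathcal{F}_{T_1}]$ with $A=\int_{T_1}^{T_2}\tilde{\sigma}^2(Y_t)\,dt$ \emph{bounded} by (A4), so Lemma~\ref{condint}(iii) applies with a bounded integrand. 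Your route is slightly cleaner in that it does not need the $\log^2$-moment assumption for this step (though that assumption is part of Theorem~\ref{main5} anyway); the trade-off is that you must first run the Girsanov argument of Theorem~\ref{main4} for each $\mathbb{P}_n$ and for $\mathbb{P}^*$, whereas the paper keeps the proof of Proposition~\ref{martingality} self-contained.
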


\begin{proof}
    Let $(\mathbb{P}_n)_{n\in\mathbb{N}}\in \mathcal{C}_{t_0}(\mu_1,\mu_2)$ such that $\mathcal{L}^{\mathbb{P}_n}(V_{\mathbb{P}_n})\le_{c}\mu_3$ and $H(\mathbb{P}_n|\mathbb{P}_0)\xrightarrow{n\rightarrow \infty}D'_{\mathbb{P}_0}$. Consider the probability densities $f_n:=\frac{d\mathbb{P}_n}{d\mathbb{P}_0}$. Then, the sequence $H(\mathbb{P}_n|\mathbb{P}_0)=\int f_n\log(f_n)d\mathbb{P}_0$ is bounded and hence, by the Dunford-Pettis theorem, there exists a probability density $f$ such that $f_n\rightharpoonup f$ weakly in $L^1$. We let $\mathbb{P}^*$ to be the probability measure such that $d\mathbb{P}^*= fd\mathbb{P}_0$. It is straightforward to check that $\mathcal{L}^{\mathbb{P}^*}(X_{T_1})=\mu_1$ and $\mathcal{L}^{\mathbb{P}^*}(X_{T_2})=\mu_2$.
\vspace{1mm}

    \noindent
    To show that $X_t$ is a $\mathbb{P}^*$-martingale, it suffices to show that $\mathbb{E}^{\mathbb{P}^*}[(X_t-X_s)\varphi]=0$ for any $T_2\ge t>s\ge t_0$ and $\varphi\in L^{\infty}(\mathcal{F}_s)$. Since $X_t$ is a $\mathbb{P}_n$-martingale, we know that 
    \be\label{eq00001}
    \mathbb{E}^{\mathbb{P}_0}[f_n(X_t-X_s)\varphi]=0.
    \ee
    However, by the assumptions on $\mu_2$, $X_{T_2}$ is uniformly integrable with respect to $(\mathbb{P}_n)_{n\in\mathbb{N}}$ and the la Valle\'e-Poussin theorem yields a convex function $\psi:\R\rightarrow \R_{\ge 0}$ with superlinear growth such that $\int \psi(x) d\mu_2(x)<+\infty$. Thus, due to the martingality of $X_t$, we have that for any $r\in [t_0,T_2]$
    $$\int \psi(X_r)d\mathbb{P}_n\leq \int \psi(X_{T_2})d\mathbb{P}_n=\int \psi(x)d\mu_2(x)\leq \infty$$
    and hence $X_r$ is uniformly integrable with respect to the family $(\mathbb{P}_n)_{n\in\mathbb{N}}$. Now Lemma \ref{condint} applies and we get $(X_t-X_s)f_n\rightharpoonup (X_t-X_s)f$ weakly in $L^1$. The result follows by sending $n\rightarrow \infty $ in \eqref{eq00001}.
\vspace{1mm}

\noindent
To show that $\mathcal{L}^{\mathbb{P}^*}(V_{\mathbb{P}^*})\le_{c,l}\mu_3$, it suffices to show 
\be\label{eq000002}
\int h\left( \mathbb{E}^{\mathbb{P}^*}[-\log(X_{T_2})+\log(X_{T_1})|\mathcal{F}_{T_1}]\right)d\mathbb{P}^*\leq \int h(x)d\mu_3(x),
\ee
for any linear and convex and lower bounded function $h:[0,+\infty)\rightarrow \R$. We set $V=-\log(X_{T_2})+\log(X_{T_1})$. By using the standard formula $\mathbb{E}^{\mathbb{P}_n}[V|\mathcal{F}_{T_1}]=\frac{\mathbb{E}^{\mathbb{P}_0}[Vf_n|\mathcal{F}_{T_1}]}{\mathbb{E}^{\mathbb{P}_0}[f_n|\mathcal{F}_{T_1}]}$, we have by the properties of $(\mathbb{P}_n)_{n\in\mathbb{N}}$
\begin{align}
    \int h(x)d\mu_3(x)\ge \mathbb{E}^{\mathbb{P}_n}\left[ h\left( \mathbb{E}^{\mathbb{P}_n}[V|\mathcal{F}_{T_1}] \right) \right]&=\int h\left( \frac{\mathbb{E}^{\mathbb{P}_0}[Vf_n|\mathcal{F}_{T_1}]}{\mathbb{E}^{\mathbb{P}_0}[f_n|\mathcal{F}_{T_1}]}\right) \mathbb{E}^{\mathbb{P}_0}[f_n|\mathcal{F}_{T_1}] d\mathbb{P}_0\nonumber\\
    &:=\Psi\left(\mathbb{E}^{\mathbb{P}_0}[Vf_n|\mathcal{F}_{T_1}],\mathbb{E}^{\mathbb{P}_0}[f_n|\mathcal{F}_{T_1}]\right),\label{eq0002}
\end{align}
where $\Psi: L^1\times L^1\rightarrow \R$ is the function defined as
\be\nonumber
\Psi(a,b)=\int h\left(\frac{a}{b} \right) b\; d\mathbb{P}_0.
\ee
Since $h$ is convex and lower bounded, standard arguments imply that $\Psi$ is convex and lower semi-continuous, therefore it is also weakly lower semi-continuous.
We now observe that, due to our assumptions on $\mu_1,\mu_2$, $V$ is uniformly integrable with respect to $(\mathbb{P}_n)_{n\in\mathbb{N}}$, hence Lemma \ref{condint}(iii) gives $\mathbb{E}^{\mathbb{P}_0}[f_n|\mathcal{F}_{T_1}]\rightharpoonup \mathbb{E}^{\mathbb{P}_0}[f|\mathcal{F}_{T_1}]$ and $\mathbb{E}^{\mathbb{P}_0}[Vf_n|\mathcal{F}_{T_1}]\rightharpoonup \mathbb{E}^{\mathbb{P}_0}[Vf|\mathcal{F}_{T_1}]$ weakly in $L^1$. %Indeed, the first convergence follows from the weak convergence of $f_n$ to $f$, while for the second convergence from Lemma \ref{condint}.
%, because the assumptions on $\mu_1,\mu_2$ imply that $V$ is uniformly integrable with respect to $(\mathbb{P}_n)_{n\in\mathbb{N}}$. 
With this in mind, passing to the limit in \eqref{eq0002} yields
$$\int h(x)d\mu_3(x)\geq \Psi\left(\mathbb{E}^{\mathbb{P}_0}[Vf|\mathcal{F}_{T_1}],\mathbb{E}^{\mathbb{P}_0}[f|\mathcal{F}_{T_1}]\right),$$
which is \eqref{eq000002}.
\end{proof}
\vspace{3mm}

\noindent
We are now ready to show Theorem \ref{main5}. 
\vspace{2mm}

\begin{proof}[Proof of Theorem \ref{main5}]
The proof follows the lines of the proof of Theorem \ref{main4} but we have to give special treatment to the terms involving the VIX constraint $\mathcal{L}^{\mathbb{P}}(V'_{\mathbb{P}})\leq \mu_3$. We will be using the notation $\mathcal{P}'=\{ \mathbb{P}\in\mathcal{P}:\; X_t\text{ is a }\mathbb{P}\text{-martingale}\}$. We have 
%by Sion's minmax theorem (Theorem \ref{sion})
\begin{align}
D'_{\mathbb{P}_0}&=\inf_{\mathbb{P}\in\mathcal{C}_{t_0}(\mu_1,\mu_2), \text{VIX}(G)}\sup_{u_1,u_2,u_3}\bigg\{ H(\mathbb{P}|\mathbb{P}_0)+\mathbb{E}^{\mathbb{P}}[u_2(X_{T_2})+u_1(X_{T_1})+u_3(V'_{\mathbb{P}})]-\sum_{i=1}^3\int u_i(x)\mu_i(dx)\bigg\}\nonumber\\
&=\inf_{\mathbb{P}\in\mathcal{P}', d\mathbb{P}=fd\mathbb{P}_0}\sup_{u_1,u_2,u_3}\bigg\{ \int f\log(f)d\mathbb{P}_0+\int u_3\left(\frac{\mathbb{E}^{\mathbb{P}_0}[Wf|\mathcal{F}_{T_1}]}{\mathbb{E}^{\mathbb{P}_0}[f|\mathcal{F}_{T_1}]}\right)\mathbb{E}^{\mathbb{P}_0}[f|\mathcal{F}_{T_1}]d\mathbb{P}_0\nonumber\\
&\hspace{6cm}+\mathbb{E}^{\mathbb{P}_0}[u_2(X_{T_2})f]+\mathbb{E}^{\mathbb{P}_0}[u_1(X_{T_1})f]-\sum_{i=1}^3\int u_i(x)\mu_i(dx)\bigg\},\label{equation1001}
\end{align}
where $W=-\log(X_{T_2})+\log(X_{T_1})$ and the supremum is taken over all bounded and continuous $u_1$, $u_2$, and $u_3$ convex, continuous and lower bounded. We notice that the functional inside the brackets (which we name $\mathcal{G}$), as a function of $f$ and $u_1,u_2,u_3$, is convex in $f\in L^1(\Omega)$, linear in $u_1,u_2,u_3$ and weakly (in $L^1$) lower semicontinuous with respect to $f$. In addition, for any fixed $u_1,u_2,u_3$ and constant $C$, the set $A=\{f\in L^1: f\ge 0, \; \int fd\mathbb{P}_0=1,\; \mathcal{G}(f, u_1,u_2,u_3)\leq C\}$ is compact with respect to the weak topology of $L^1$.
\vspace{1mm}

\noindent
Indeed, suppose that the sequence $(f_n)_{n\in\mathbb{N}}\in A$. Since $u_1,u_2,u_3$ are lower bounded, $\mathcal{G}(f,u_1,u_2,u_3)\leq C$ gives an upper bound for $\int f_n\log(f_n)d\mathbb{P}_0$.
On the one hand, the Dunford-Pettis theorem implies that $(f_n)_{n\in\mathbb{N}}$ converges weakly in $L^1$ (up to subsequence) to some $f\in L^1(\Omega)$. In addition, due to the assumption on $\tilde{\sigma}$, it is easy to prove that $\mathbb{E}^{\mathbb{P}_0}[X_t^2f_n]$ are uniformly bounded. Therefore, due to the argument from Proposition \ref{martingality}, the measure $\mathbb{P}$ with $d\mathbb{P}=f\mathbb{P}_0$ remains in $\mathcal{P}'$. The compactness of the set $A$ follows.
\vspace{1mm}

\noindent
The conditions for the Von-Neumann theorem \ref{Von} are satisfied, thus we can write \eqref{equation1001} as
\begin{align}
D'_{\mathbb{P}_0}&=\sup_{u_1,u_2,u_3}\inf_{\mathbb{P}\in\mathcal{P}', d\mathbb{P}=fd\mathbb{P}_0}\bigg\{ \int f\log(f)d\mathbb{P}_0+\int u_3\left(\frac{\mathbb{E}^{\mathbb{P}_0}[Wf|\mathcal{F}_{T_1}]}{\mathbb{E}^{\mathbb{P}_0}[f|\mathcal{F}_{T_1}]}\right)\mathbb{E}^{\mathbb{P}_0}[f|\mathcal{F}_{T_1}]d\mathbb{P}_0\nonumber\\
    &\hspace{6cm}+\mathbb{E}^{\mathbb{P}_0}[u_2(X_{T_2})f]+\mathbb{E}^{\mathbb{P}_0}[u_1(X_{T_1})f]-\sum_{i=1}^3\int u_i(x)\mu_i(dx)\bigg\}.\label{equation1002}
\end{align}
Now, using the fact that $V'_{\mathbb{P}}\ge 0$ whenever $\mathbb{P}\in \mathcal{P}'$ and the equality 
$$\mathbb{E}^{\mathbb{P}}[u_3(V'_{\mathbb{P}})]=\inf_{V\in\mathcal{F}_{T_1}}\sup_{\Delta\in L^{\infty}(\mathcal{F}_{T_1})}\left\{ \mathbb{E}^{\mathbb{P}}[u_3(V)+\Delta(W-V)]\right\},$$
\eqref{equation1002} can be written as
\begin{align}
D'_{\mathbb{P}_0}&=\sup_{u_1,u_2,u_3}\inf_{\mathbb{P}\in\mathcal{P}',\; d\mathbb{P}=fd\mathbb{P}_0}\bigg\{ \int f\log(f)d\mathbb{P}_0+\mathbb{E}^{\mathbb{P}_0}[u_2(X_{T_2})f]+\mathbb{E}^{\mathbb{P}_0}[u_1(X_{T_1})f]\nonumber\\  
&\hspace{3.8cm}-\sum_{i=1}^3\int u_i(x)\mu_i(dx)+\inf_{V\in\mathcal{F}_{T_1}}\sup_{\Delta\in L^{\infty}(\mathcal{F}_{T_1})}\left\{ \mathbb{E}^{\mathbb{P}}[u_3(|V|)+\Delta(W-|V|)]\right\}\bigg\}.\nonumber
\end{align}
With a similar Von-Neumann argument as previously, we can rewrite
\begin{align}
D'_{\mathbb{P}_0}&=\sup_{u_1,u_2,u_3}\inf_{V\in\mathcal{F}_{T_1}}\sup_{\Delta\in L^{\infty}(\mathcal{F}_{T_1})}\inf_{\mathbb{P}\in\mathcal{P}'}\bigg\{ H(\mathbb{P}|\mathbb{P}_0)+\mathbb{E}^{\mathbb{P}}[u_2(X_{T_2})]+\mathbb{E}^{\mathbb{P}}[u_1(X_{T_1})]+\mathbb{E}^{\mathbb{P}}[u_3(|V|)]\nonumber\\  
&\hspace{5cm}-\sum_{i=1}^3\int u_i(x)\mu_i(dx)+\mathbb{E}^{\mathbb{P}}\left[\Delta(-\log(X_{T_2})+\log(X_{T_1})-|V|)\right]\bigg\}.\label{equation1003}
\end{align}
By Girsanov's theorem, just as in the proof of Theorem \ref{main4}, for every $\mathbb{P}\in \mathcal{P}'$ there exists a process $\alpha_t$, such that for $t\in [t_0,T_2]$, $\frac{d\mathbb{P}}{d\mathbb{P}_0}\bigg|_{\mathcal{F}_t}=\text{exp}\left( \int_{t_0}^t\alpha_sdW_s^{\perp}-\frac{1}{2}\int_{t_0}^t|\alpha_s|^2ds\right)$,  $\mathbb{E}^{\mathbb{P}}\left[ \int_{t_0}^{T_2}|\alpha_t|^2dt\right]<\infty$ and (by the mimicking theorem \cite[Corollary 3.6]{brunick2013mimicking}) in the minimization problem $\alpha$ can be taken to be a function of $t,x,y$. Hence, \eqref{equation1003} becomes 
\begin{align}
D'_{\mathbb{P}_0}&=\sup_{u_1,u_2,u_3}\inf_{V\in\mathcal{F}_{T_1}}\sup_{\Delta\in L^\infty(\mathcal{F}_{T_1})}\inf_{\alpha\in L^2(dm_t\otimes dt)}\bigg\{\frac{1}{2}\int_{t_0}^{T_2}\mathbb{E}^{\mathbb{P}}\left[ |\alpha(t,\tilde{X}_t,\tilde{Y}_t)|^2\right]dt-\sum_{i=1}^3\int u_i(x)\mu_i(dx)\nonumber\\
&\hspace{2.2cm}+\mathbb{E}^{\mathbb{P}}[u_1(\tilde{X}_{T_1})+u_2(\tilde{X}_{T_2})+u_3(|V|)]+\mathbb{E}^{\mathbb{P}}\left[\Delta\left( -\log(\tilde{X}_{T_2})+\log(\tilde{X}_{T_1})-|V|\right)\right]\bigg\},\label{eq101}
\end{align}
where $m_t$ is the distribution of $(\tilde{X}_t,\tilde{Y}_t)$ which has the dynamics (under $\mathbb{P}$) 
\be \label{newSDE'}
\begin{cases}
d\tilde{X}_t=\sigma(\tilde{X}_t,\tilde{Y}_t)d\tilde{W}_t,\\
d\tilde{Y}_t=\left( b(\tilde{X}_t,\tilde{Y}_t)+ \tau_2(\tilde{X}_t,\tilde{Y}_t)\alpha(t,\tilde{X}_t,\tilde{Y}_t)\right)dt+\tau_1(\tilde{X}_t,\tilde{Y}_t) d\tilde{W}_t+\tau_2(\tilde{X}_t,\tilde{Y}_t)d\tilde{W}_t^{\perp},
\end{cases}\ee
for independent $\mathbb{P}$-Brownian motions $\tilde{W},\tilde{W}^{\perp}$. By optimizing in $[T_1,T_2]$ first and then using the minmax principle once again, \eqref{eq101} is equal to
\begin{align}
D'_{\mathbb{P}_0}&=\sup_{u_1,u_2,u_3}\inf_{\alpha\in L^2(dm_t\otimes dt)}\inf_{V\in \mathcal{F}_{T_1}}\sup_{\Delta\in L^{\infty}(\mathcal{F}_{T_1})}\bigg\{ \frac{1}{2}\int_{t_0}^{T_1}\int |\alpha(t,x,y)|^2dm_tdt+\int u_1 dm_{T_1}-\sum_{i=1}^3\int u_id\mu_i\nonumber\\
&\hspace{3cm}-\mathbb{E}^{\mathbb{P}}\left[ \Delta\left(-\log(\tilde{X}_{T_1})+|V|\right) -u_3(|V|)-u(T_1,\tilde{X}_{T_1},\tilde{Y}_{T_1}; \Delta)\right]\bigg\},\label{eq102}
\end{align}
where $u$ is the value function of the optimal control problem for $s\in [T_1,T_2]$
\begin{align}
u(s,x,y;\delta)=\inf_{\alpha}\bigg\{ \frac{1}{2}\int_{s}^{T_2}\int|\alpha(t,x,y)|^2dm_t(x,y)dt+\int u_2(x)dm_{T_2}(x,y)-\delta \int \log(x)dm_{T_2}(x,y)\bigg\}\label{optimalc}
\end{align}
and the minimization is happening over all drifts $\alpha$ and $m_t$ is the distribution of the pair $(\tilde{X}_t,\tilde{Y}_t)$ satisfying \eqref{newSDE'} with $(\tilde{X}_s,\tilde{Y}_s)=(x,y)$. We notice that for any $(s,x,y)\in [T_1,T_2]\times (0,+\infty)\times \R$,
\begin{align*}
    u(s,x,y;\delta)&\leq \|u_2\|_{\infty}+|\delta|\left|\mathbb{E}[\;\log(X_T)\;] \right|\leq \|u_2\|_{\infty}+|\delta||\log(x)|+\frac{|\delta|}{2}\left|\mathbb{E}\left[ \int_{s}^{T_2}\tilde{\sigma}^2(Y_s)ds]  \right]\right|\\
    &\leq \|u_2\|_{\infty}+\frac{|\delta|\|\tilde{\sigma}^2\|_{\infty}}{2}+|\delta||\log(x)|,
\end{align*}
where $(X_t,Y_t)$ was taken to be the solution of \eqref{newSDE'} with $\alpha=0$, and likewise
\begin{align*}
    u(s,x,y;\delta)&\ge \inf_{\alpha}\bigg\{ \int u_2(x)dm_{T_2}(x,y)-\delta \int \log(x)dm_{T_2}(x,y)\bigg\}\\
    &\ge \inf_{\alpha}\bigg\{ -\|u_2\|_{\infty}-|\delta|\left|\mathbb{E}\left[ \log(\tilde{X}_{T_2}) \right]\right|\bigg\}\ge -\|u_2\|_{\infty}-|\delta||\log(x)|-\frac{|\delta|\|\tilde{\sigma}^2\|_{\infty}}{2}.
\end{align*}
Therefore, $u$ has logarithmic growth: $|u(s,x,y;\delta)|\leq C(1+|\log(x)|)$.
\vspace{1mm}

\noindent
By standard arguments from stochastic optimal control theory, it is known that $u(t,x,y;\delta)$ satisfies (in the viscosity sense) in $(T_1,T_2)\times(0,+\infty)\times \R$ the state constraint problem
\be\label{firsteq}
\begin{cases}
-\partial_tu -\mathcal{L}^0_{x,y}u+\frac{1}{2}\tau_2^2(x,y)(\partial_y u)^2=0, &t\in (T_1,T_2),\\
u(T_2,x,y;\delta)=u_2(x)-\delta \log(x),\\
u(t,0,y;\delta)=\text{sign}(\delta)(+\infty),
\end{cases}\ee
where $\mathcal{L}^0_{x,y}$ was defined in \eqref{gen}. The value of $u(t,0,y;\delta)$ follows from the fact that if $X$ in \eqref{newSDE'} starts from $0$, then $X$ remains $0$.
\vspace{1mm}

\noindent
In fact, $u$ is the unique viscosity solution of \eqref{firsteq}. Indeed, by considering the change of variables $x=e^w,\;w\in\R$, we have that $u(t,x,y)=u(t,e^w,y)=:v(t,w,y)$ and $v$ satisfies 
\be\label{equationafter}
\begin{cases}
   -\partial_t v -\mathcal{L}^0_{w,y}v+\frac{1}{2}\tau_2^2(e^w,y)(\partial_yv)^2=0, &t\in(T_1,T_2),\;(w,y)\in\R^2,\\
   v(T_2,w,y)=u_2(e^w)-\delta w,& (w,y)\in \R^2,
\end{cases}
\ee
in the viscosity sense. Due to assumption (A3) and since $u_2(e^w)-\delta w$ has at most linear growth, \cite[Theorem 2.1, Example 2.1]{da2006uniqueness} implies that there is at most one viscosity solution of \eqref{equationafter} with at most quadratic growth. Since $v(t,w,y):=u(t,e^w,y)$ has at most quadratic growth (in fact linear), this means that $v$ is uniquely characterized by \eqref{equationafter}. Hence, $u$ is the unique viscosity solution of \eqref{optimalc} with at most logarithmic growth.
\vspace{1mm}

\noindent
Returning to \eqref{eq102} we see that there is the term
\begin{align}
\inf_{V\in\mathcal{F}_{T_1}}\sup_{\Delta\in L^\infty(\mathcal{F}_{T_1})}\bigg\{\mathbb{E}^{\mathbb{P}}\left[ -\Delta\left(-\log(\tilde{X}_{T_1})+|V|\right)+u(T_1,\tilde{X}_{T_1},\tilde{Y}_{T_1}; \Delta)+u_3(|V|)\right]\bigg\}.\label{term}
%&=\sup_{\Delta\in\mathcal{F}_{T_1}}\bigg\{ \mathbb{E}^{\mathbb{P}}\left[ -\Delta\left(G(\tilde{X}_{T_1})+V^2\right)+\int_{T_1}^{T_2}\frac{|\alpha^{\Delta}(t,\tilde{X}_t,\tilde{Y}_t)|^2}{2}dm^{\Delta}_tdt+\int u_2dm_{T_2}^{\Delta}+\Delta\int Gdm_{T_2}^{\Delta}\right]\bigg\},\label{eq103}
\end{align}
We observe that the function 
$$(\omega,\delta)\mapsto -\delta\left(-\log(\tilde{X}_{T_1}(\omega))+|V(\omega)|\right)+u(T_1,\tilde{X}_{T_1}(\omega),\tilde{Y}_{T_1}(\omega); \delta)+u_3(|V(\omega)|)$$
takes finite values, is continuous in $\delta$ (Proposition \ref{deltacont}) and measurable in $\omega$ (because $(x,y)\mapsto u(T_1,x,y;\delta)$ and $u_3$ are continuous functions), therefore by Proposition \ref{rockafellar} (case (i)), we can rewrite \eqref{term} as
$$\inf_{V\in\mathcal{F}_{T_1}}\mathbb{E}^{\mathbb{P}}\left[\sup_{\delta\in \R}\left\{ -\delta\left(-\log(\tilde{X}_{T_1})+|V|\right)+u(T_1,\tilde{X}_{T_1},\tilde{Y}_{T_1}; \delta)+u_3(|V|)\right\}\right].$$
Since the supremum of continuous functions is lower semicontinuous, we may apply Proposition \ref{rockafellar} (case (ii)) to further rewrite \eqref{term} as
\begin{align*}
    \mathbb{E}^{\mathbb{P}}\left[\inf_{v\geq 0}\sup_{\delta\in \R}\left\{ -\delta\left(-\log(\tilde{X}_{T_1})+v\right)+u(T_1,\tilde{X}_{T_1},\tilde{Y}_{T_1}; \delta)+u_3(v)\right\}\right]=:\mathbb{E}^{\mathbb{P}}[\Phi(\tilde{X}_{T_1},\tilde{Y}_{T_1})],
\end{align*}
where $\Phi(x,y)=\inf_{v\geq 0}\sup_{\delta\in \R}\left\{ -\delta\left(-\log(x)+v\right)+u(T_1,x,y; \delta)+u_3(v)\right\}.$
Consequently, \eqref{eq102} becomes
\begin{align*}
D'_{\mathbb{P}_0}&=\sup_{u_1,u_2,u_3}\inf_{\alpha\in L^2(dm_t\otimes dt)}\bigg\{ \frac{1}{2}\int_{t_0}^{T_1}\int |\alpha(t,x,y)|^2dm_t(x,y)dt+\int u_1(x) dm_{T_1}(x,y)\nonumber\\
&\hspace{5.5cm}+\mathbb{E}^{\mathbb{P}}\left[ \Phi(\tilde{X}_{T_1},\tilde{Y}_{T_1})\right]-\sum_{i=1}^3\int u_i(x)d\mu_i(x)\bigg\}\nonumber\\
&=\sup_{u_1,u_2,u_3}\bigg\{ u(t_0,X_{t_0},Y_{t_0})-\sum_{i=1}^3\int u_id\mu_i\bigg\},\label{equation1004}
\end{align*}

\noindent
where $u$ is the value function of the optimal control problem for $s\in [t_0,T_1]$
\be\label{optimalc2}
u(s,x,y)=\inf_{\alpha}\bigg\{ \int_{s}^{T_1}\frac{|\alpha(t,x',y')|^2}{2}dm_t(x',y')dt+\int\left(u_1(x')+\Phi(x',y')\right)dm_{T_1}(x',y')\bigg\},
\ee
with the infimum taken over all controls $\alpha$ and $m_t$ is the distribution of the pair $(\tilde{X}_t,\tilde{Y}_t)$ which satisfies \eqref{newSDE'} with initial condition at $t=t_0$. We now observe that the function $\Phi:(0,+\infty)\times \R\rightarrow \R$ is bounded. Indeed, \footnote{Note that $u(T_1,x,y;\delta)$ in $\Phi$ was defined in \eqref{optimalc}.}
\begin{align*}
&\Phi(x,y)\geq \inf_{v\geq 0}\{ u(T_1,x,y; 0)+u_3(v)\},\text{ which is lower bounded, and}\\
& \Phi(x,y)\leq \inf_{v\geq 0}\sup_{\delta\in \R}\left\{ -\delta(-\log(x)+v)+\int u_2(x)dm_{T_2}(x,y)+\delta\int -\log(x)dm_{T_2}(x,y)+u_3(v)\right\}\\
&\hspace{1.2cm} =\int u_2(x)dm_{T_2}(x,y)+u_3\left( -\int \log(x)dm_{T_2}(x,y)+\log(x)\right),
\end{align*}
where $m_{T_2}$ is the distribution of $(X_{T_2},Y_{T_2})$ from \eqref{newSDE'} (with $\alpha_t=0$, $X_{T_1}=x$ and $Y_{T_1}=y$), which is upper bounded, because $u_2$ is bounded and 
$$-\int\log(x)dm_{T_2}(x,y)+\log(x)=\mathbb{E}^{\mathbb{P}_0}\left[ -\log(X_{T_2}) \right]+\log(x)=\mathbb{E}^{\mathbb{P}_0}\left[ \int_{T_1}^{T_2}\tilde{\sigma}^2(Y_t)dt+\int_{T_1}^{T_2}\tilde{\sigma}(Y_t)dW_t  \right]$$ 
is bounded by our assumptions on $\tilde{\sigma}$. Therefore, \eqref{optimalc2} is finite and formula \eqref{dual101} follows.
\vspace{2mm}

\noindent
To prove the characterization \eqref{character}, when $\tau_2$ is constant, we note that Lemma \ref{continuity estimates lemma} gives that $\Phi$ is continuous. Since $\Phi$ is continuous and bounded, we deduce (\cite[Theorem 2.1]{da2006uniqueness}) that there exists a unique bounded viscosity solution $v$ of \eqref{HJvix}. It is straightforward to see that $v(t,w,y):=u(t,e^w,y)$ for any $(t,w,x)\in [t_0,T_1]\times \R\times\R$, hence we \eqref{character} holds.
\end{proof}

\begin{comment}
\be\label{secondeq}
\begin{cases}
    -\partial_tv -\mathcal{L}^0_{w,y}v+\frac{1}{2}\tau_2^2(\partial_y u)^2=0, &t\in (t_0,T_1),\;\; (w,y)\in \R^2,\\
v(T_1,w,y)=u_1(e^w)+\Phi(e^w,y),& (w,y)\in \R^2.
\end{cases}\ee
and it is known that $u$ satisfies (in the viscosity sense) the Hamilton-Jacobi equation for $(x,y)\in (0,+\infty)\times \R$
\be\label{secondeq}
\begin{cases}
    -\partial_tu -\mathcal{L}^0_{x,y}u+\frac{1}{2}\tau_2^2(x,y)(\partial_y u)^2=0, &t\in (t_0,T_1),\\
u(T_1,x,y)=u_1(x)+\Phi(x,y),\\
u(t,0,y)=u_1(0)+u_2(0)+u_3(0),
\end{cases}\ee
where the value of $u$ at $x=0$ can be obtained as in \eqref{firsteq}. Since $u$ satisfies \eqref{firsteq} and \eqref{secondeq} in the viscosity sense, we conclude that $u$ satisfies \eqref{HJvix}. The conclusion follows from \eqref{equation1004}.
\end{comment}

\appendix

\section{Important results form the literature}

\noindent
We state the Von-Neumann theorem a proof of which can be found in \cite{orrieri2019variational}.
\begin{theorem}\label{Von}
Let $\mathbb{A}$ and $\mathbb{B}$ be convex subsets of some vector spaces and suppose that $\mathbb{B}$ is endowed with some Hausdorff topology. Let $\mathcal{L}$ be a function satisfying:
\begin{align*}
&a\mapsto \mathcal{L}(a,b) \text{ is concave in }\mathbb{A}\text{ for every }b\in\mathbb{B},\\
&b\mapsto \mathcal{L}(a,b) \text{ is convex in }\mathbb{B}\text{ for every }a\in\mathbb{A}.
\end{align*}
Suppose also that there exists $a_*\in\mathbb{A}$ and $C_*>\sup_{a\in\mathbb{A}}\inf_{b\in\mathbb{B}}\mathcal{L}(a,b)$ such that:
\begin{align*}
    &\mathbb{B}_*:=\{b\in\mathbb{B}|\mathcal{L}(a_*,b)\leq C_*\}\text{ is not empty and compact in }\mathbb{B},\\
    & b\mapsto \mathcal{L}(a,b)\text{ is lower-semicontinuous in }\mathbb{B}_* \text{ for every }a\in\mathbb{A}.
\end{align*}
Then,
$$\min_{b\in\mathbb{B}}\sup_{a\in\mathbb{A}}\mathcal{L}(a,b)=\sup_{a\in\mathbb{A}}\inf_{b\in\mathbb{B}}\mathcal{L}(a,b),$$
where the fact that the infimum is a minimum is part of the theorem.
\end{theorem}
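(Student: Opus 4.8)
The plan is to establish the one nontrivial inequality
$\min_{b\in\mathbb{B}}\sup_{a\in\mathbb{A}}\mathcal{L}(a,b)\le\sup_{a\in\mathbb{A}}\inf_{b\in\mathbb{B}}\mathcal{L}(a,b)=:v^{*}$,
since the reverse is the trivial weak-duality bound $\inf_{b}\mathcal{L}(a,b)\le\mathcal{L}(a,b)\le\sup_{a'}\mathcal{L}(a',b)$, and then to read off attainment of the infimum from the coercivity hypothesis. I would argue the inequality by contradiction: assume $\inf_{b\in\mathbb{B}}\sup_{a\in\mathbb{A}}\mathcal{L}(a,b)>v^{*}$ and fix a level $c$ with $v^{*}<c<\min\{\inf_{b}\sup_{a}\mathcal{L}(a,b),\,C_{*}\}$, which exists because $C_{*}>v^{*}$.

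Next I would perform a finite reduction using the compactness of $\mathbb{B}_{*}$. Since $\sup_{a}\mathcal{L}(a,b)>c$ for every $b\in\mathbb{B}$, and each $b\mapsto\mathcal{L}(a,b)$ is lower semicontinuous on $\mathbb{B}_{*}$, the sets $G_{a}:=\{b\in\mathbb{B}_{*}:\mathcal{L}(a,b)>c\}$ are open in $\mathbb{B}_{*}$ and cover $\mathbb{B}_{*}$; extract a finite subcover $G_{a_{1}},\dots,G_{a_{n}}$ and set $F:=\{a_{*},a_{1},\dots,a_{n}\}$. For $b\in\mathbb{B}_{*}$ some $\mathcal{L}(a_{i},b)$ exceeds $c$, while for $b\notin\mathbb{B}_{*}$ one has $\mathcal{L}(a_{*},b)>C_{*}>c$; hence $\inf_{b\in\mathbb{B}}\max_{a\in F}\mathcal{L}(a,b)\ge c>v^{*}$, and it remains to contradict this by proving $\inf_{b\in\mathbb{B}}\max_{a\in F}\mathcal{L}(a,b)\le v^{*}$ for the finite family $F$.

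For the finite case, write $F=\{\tilde a_{1},\dots,\tilde a_{N}\}$, $\Lambda(b):=(\mathcal{L}(\tilde a_{1},b),\dots,\mathcal{L}(\tilde a_{N},b))$, and $\mu:=\inf_{b\in\mathbb{B}}\max_{i}\mathcal{L}(\tilde a_{i},b)$. The set $S:=\bigcup_{b\in\mathbb{B}}(\Lambda(b)+\R^{N}_{\ge0})$ is convex — this is precisely where convexity of $\mathbb{B}$ and of $b\mapsto\mathcal{L}(a,b)$ are used — and upward closed. For every $t<\mu$ the diagonal point $p:=(t,\dots,t)$ lies outside $\overline{S}$: if $z_{k}\in S$ with $z_{k}\to p$, then $z_{k}$ has a witness $b_{k}\in\mathbb{B}$ with $\max_{i}\mathcal{L}(\tilde a_{i},b_{k})\le\max_{i}(z_{k})_{i}$, forcing $\mu\le\max_{i}(z_{k})_{i}\to t<\mu$, a contradiction (here the real-valuedness of $\mathcal{L}$ is essential; no semicontinuity is needed). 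Hahn–Banach then separates $p$ from $\overline{S}$ by some $\xi\ne0$; upward-closedness forces $\xi\ge0$, and after normalizing $\sum_{i}\xi_{i}=1$ one obtains a $\beta>t$ with $\sum_{i}\xi_{i}\mathcal{L}(\tilde a_{i},b)=\langle\xi,\Lambda(b)\rangle\ge\beta$ for all $b\in\mathbb{B}$. Concavity of $\mathcal{L}$ in its first argument together with convexity of $\mathbb{A}$ yields $\bar a:=\sum_{i}\xi_{i}\tilde a_{i}\in\mathbb{A}$ with $\inf_{b}\mathcal{L}(\bar a,b)\ge\beta>t$; letting $t\uparrow\mu$ gives $v^{*}\ge\mu\ge c>v^{*}$, the contradiction. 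Therefore $\inf_{b}\sup_{a}\mathcal{L}(a,b)\le v^{*}$, so equality holds.

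Finally, for attainment: from $\inf_{b\in\mathbb{B}}\sup_{a}\mathcal{L}(a,b)=v^{*}<C_{*}$ pick $b_{0}$ with $\sup_{a}\mathcal{L}(a,b_{0})<C_{*}$; then $\mathcal{L}(a_{*},b_{0})<C_{*}$, so $b_{0}\in\mathbb{B}_{*}$, and since every $b\notin\mathbb{B}_{*}$ satisfies $\sup_{a}\mathcal{L}(a,b)>C_{*}>\sup_{a}\mathcal{L}(a,b_{0})$, the infimum over $\mathbb{B}$ agrees with the infimum over $\mathbb{B}_{*}$. As $b\mapsto\sup_{a}\mathcal{L}(a,b)$ is lower semicontinuous (a supremum of lsc functions) on the compact set $\mathbb{B}_{*}$, this infimum is attained, which is the asserted "$\min$". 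The main obstacle is the third step: making the finite-dimensional separation argument work with compactness on only one side — keeping the diagonal point off $\overline{S}$, and ensuring the separating functional can be taken coordinatewise nonnegative and normalized so that concavity in $a$ produces an admissible $\bar a\in\mathbb{A}$. The other delicate point is the bookkeeping in the finite-cover step, in particular the necessity of including $a_{*}$ in $F$ so that the coercivity controls $b\notin\mathbb{B}_{*}$.
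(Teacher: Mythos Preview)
The paper does not prove this theorem at all; it only states it in the appendix and cites \cite{orrieri2019variational} for a proof. Your argument is a correct self-contained proof along the classical Ky Fan line---finite reduction via the compactness of $\mathbb{B}_*$ (with the key inclusion of $a_*$ in $F$ to control $b\notin\mathbb{B}_*$), then a finite-dimensional separation combined with concavity in $a$---and the attainment step is handled cleanly. Since the paper offers nothing to compare against, there is no divergence to discuss.
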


\noindent
We will also need the following Proposition that provides us with conditions under which we can change the order of $\inf /\sup$ with an integral.

\begin{proposition} \label{rockafellar}
    Let $(\Omega,\mathcal{F},\mathbb{P})$ be a probability space and $R=L^{\infty}\text{ or }L^1$ in $(\Omega,\mathcal{F})$. Suppose that $f:\Omega\times \R^n\rightarrow \R\cup\{+\infty,-\infty\}$ is a function that satisfies one of the following set of conditions:\\
    (i) $f$ takes values in $\R$ and $f(\omega,x)$ is measurable in $\omega$ and continuous in $x$.\\
    (ii) $f(\omega,x)$ is lower semicontinuous in $x$ and there exists a function $g=g(\omega,y,x):\Omega\times\R\times\R\rightarrow \R$ which is continuous in $(x,y)$ and measurable in $\omega$ such that $f(\omega,x)=\sup_{y\in\R} g(\omega,y,x)$.\\
    Then,
    \be \label{swap}
    \inf_{X\in R} \int_{\Omega} f(\omega, X(\omega))d\mathbb{P}(\omega)=\int_{\Omega}\inf_{x\in \R}f(\omega, x)d\mathbb{P}(\omega).
    \ee
    In the (i) case, the infimums can also be replaced by supremums.
\end{proposition}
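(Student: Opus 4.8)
\medskip
\noindent\textbf{Proof plan.}
Write $m(\omega):=\inf_{x\in\R^n}f(\omega,x)$ and $I_f(X):=\int_\Omega f(\omega,X(\omega))\,d\mathbb{P}(\omega)$, and assume without loss of generality that $(\Omega,\mathcal F,\mathbb{P})$ is complete. The plan is to prove the two inequalities in \eqref{swap} separately, after two reductions. Since $L^\infty\subseteq L^1$ on a probability space, it suffices to prove the identity for $R=L^\infty$: granting it, one gets $\inf_{L^1}I_f\le\inf_{L^\infty}I_f=\int_\Omega m\,d\mathbb{P}$, while the reverse bound $\inf_{L^1}I_f\ge\int_\Omega m\,d\mathbb{P}$ is the ``easy'' inequality below applied with $R=L^1$. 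Also, in case (i) the statement with suprema follows by applying the one with infima to $-f$, which again satisfies (i).

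For the easy inequality $\inf_{X\in R}I_f(X)\ge\int_\Omega m\,d\mathbb{P}$, I would first record that $\omega\mapsto f(\omega,X(\omega))$ is measurable for every measurable $X$: in case (i) because $f$, being Carath\'eodory, is jointly $\mathcal F\otimes\mathcal B(\R^n)$-measurable, and in case (ii) because continuity of $g$ in $y$ gives $f(\omega,x)=\sup_{y\in\mathbb{Q}}g(\omega,y,x)$, exhibiting $f$ as a normal integrand. Since $f(\omega,X(\omega))\ge m(\omega)$ for a.e.\ $\omega$, monotonicity of the integral and then taking the infimum over $X\in R$ give the claim.

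The core is the reverse inequality $\inf_{X\in R}I_f(X)\le\int_\Omega m\,d\mathbb{P}$, which I would obtain by measurable selection. Fix $\varepsilon>0$ and, for $K\in\mathbb{N}$, set $m^K(\omega):=\inf_{|x|\le K}f(\omega,x)$; this is measurable in $\omega$ (by restricting to rational $x$ in case (i), and by the measurable projection theorem for normal integrands in case (ii)), and $m^K\downarrow m$ pointwise. The multifunction $F_K(\omega):=\{x:\ |x|\le K,\ f(\omega,x)\le m^K(\omega)+\varepsilon\}$ has nonempty compact values (using continuity, resp.\ lower semicontinuity, of $f(\omega,\cdot)$) and $\mathcal F\otimes\mathcal B(\R^n)$-measurable graph, so the Kuratowski--Ryll-Nardzewski selection theorem yields a measurable selector $X^K$ with $|X^K|\le K$; hence $X^K\in L^\infty\subseteq R$ and $f(\omega,X^K(\omega))\le m^K(\omega)+\varepsilon$. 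Therefore $\inf_{X\in R}I_f(X)\le I_f(X^K)\le\int_\Omega m^K\,d\mathbb{P}+\varepsilon$, and letting $K\to\infty$ (monotone convergence for the decreasing sequence $m^K\downarrow m$, using one-sided integrability of $m^{K_0}\le f(\cdot,x_0)$ for a fixed $x_0$ when $\int_\Omega m\,d\mathbb{P}>-\infty$, and the obvious blow-up when $\int_\Omega m\,d\mathbb{P}=-\infty$) followed by $\varepsilon\downarrow0$ finishes the proof. This is, in effect, the classical formula for the infimum of an integral functional of a normal integrand.

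I expect the main obstacle to be the measurable-selection step together with the measurability of the value functions $m^K$ in case (ii): there $f$ is only a normal integrand (an upper envelope of Carath\'eodory functions), so the inf-projection $m^K$ is measurable only via the projection theorem — which is where completeness of $\mathbb{P}$ is used — and the $\varepsilon$-optimal, $K$-bounded choice of $x$ must be made jointly measurable in $\omega$. A secondary point is the passage $K\to\infty$, which needs a mild one-sided integrability of $\omega\mapsto f(\omega,x_0)$ (automatic in the applications of interest, e.g.\ when $u_3$ is bounded below and $u(T_1,\cdot\,;\delta)$ has logarithmic growth, so that $x_0=0$ works).
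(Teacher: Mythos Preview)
Your argument is correct and is essentially a self-contained reproof of Rockafellar's interchange theorem for normal integrands via measurable selection; the reductions to $R=L^\infty$ and to completeness of $\mathbb{P}$ are sound, and your treatment of case~(ii) (writing $f=\sup_{y\in\mathbb{Q}}g(\cdot,y,\cdot)$ to get joint measurability, then invoking projection for $m^K$ and Kuratowski--Ryll-Nardzewski for the selector) is exactly the right mechanism. The only soft spot, which you identify, is the passage $K\to\infty$ when no one-sided integrability of $f(\cdot,x_0)$ is assumed; in full generality this is handled by Rockafellar's conventions for extended-real integrals, and in the paper's applications it is indeed harmless.

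The paper's own proof, by contrast, does not argue from scratch: it simply checks that in each case $f$ (or $-f$, for the supremum version) is a normal integrand in Rockafellar's sense---citing \cite[Proposition~2C, Proposition~2R]{rockafellar2006integral} for this verification---and then invokes \cite[Theorem~3A]{rockafellar2006integral} directly to obtain \eqref{swap}. So the two approaches differ only in packaging: you unpack the measurable-selection proof that underlies Rockafellar's theorem, while the paper treats the result as a black box from the literature. Your route is more elementary and self-contained; the paper's is a three-line citation.
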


\begin{proof}
    We start by assuming (i). In this case, the result follows from \cite[Theorem 3A p.185]{rockafellar2006integral}. To prove \eqref{swap} for supremums, we observe that due to \cite[Proposition 2C p.174]{rockafellar2006integral}, $-f$ satisfies the conditions of \cite[Theorem 3A p.185]{rockafellar2006integral}, therefore
    $$\inf_{X\in R} \int_{\Omega} -f(\omega, X(\omega))d\mathbb{P}(\omega)=\int_{\Omega}\inf_{x\in \R}\{-f(\omega, x)\}d\mathbb{P}(\omega).$$
    \vspace{1mm}

    \noindent
    Now we assume (ii). By \cite[Proposition 2C p.174]{rockafellar2006integral} and \cite[Proposition 2R p.180]{rockafellar2006integral}, $f$ satisfies the conditions of \cite[Theorem 3A p.185]{rockafellar2006integral}. The conclusion follows.
\end{proof}

\section{Technical Propositions}

\noindent
In this section, to increase the readability of the paper, we state and prove some propositions that require more computational, though elementary, arguments.

\begin{proposition}\label{VIXoptionprice}
    With the notation introduced in section 1, suppose that $\mathbb{P}\in \text{VIX'}$. Then \eqref{optionpricebound} holds for any $K\in [0,+\infty)$.
\end{proposition}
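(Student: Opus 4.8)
The plan is to unwind the definitions and reduce the claimed inequality \eqref{optionpricebound} to the defining property of the VIX$'$ constraint applied to a single, explicitly chosen convex lower-bounded test function. First I would recall that, by Remark \ref{VIXrem}, for $\mathbb{P}\in\text{VIX}'$ we have $\text{VIX}_{\mathbb{P}}=100\sqrt{\tfrac{2}{T_2-T_1}\,V_{\mathbb{P}}}$, where $V_{\mathbb{P}}=\mathbb{E}^{\mathbb{P}}[-\log(X_{T_2})+\log(X_{T_1})\mid\mathcal{F}_{T_1}]\ge 0$ because $-\log$ is convex and $X$ is a $\mathbb{P}$-martingale; in particular $\mathcal{L}^{\mathbb{P}}(V_{\mathbb{P}})$ is supported on $[0,+\infty)$. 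The key observation is that the map $v\mapsto \big(K-100\sqrt{\tfrac{2}{T_2-T_1}\,v}\big)_+$ is convex on $[0,+\infty)$: indeed $v\mapsto \sqrt{v}$ is concave, so $v\mapsto K-100\sqrt{\tfrac{2}{T_2-T_1}v}$ is convex, and the positive part of a convex function is convex. Moreover this function is bounded (it takes values in $[0,K]$), hence a fortiori lower bounded, so it is an admissible test function for the relation $\mathcal{L}^{\mathbb{P}}(V_{\mathbb{P}})\le_{c,l}\mu_3$.

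Concretely, set
$$h(v):=\Big(K-100\sqrt{\tfrac{2}{T_2-T_1}\,v}\Big)_+,\qquad v\in[0,+\infty),$$
which is convex and lower bounded. Since $\mathbb{P}\in\text{VIX}'$ means precisely $\mathcal{L}^{\mathbb{P}}(V_{\mathbb{P}})\le_{c,l}\mu_3$, by definition of $\le_{c,l}$ we obtain
$$\int h(v)\,d\mathcal{L}^{\mathbb{P}}(V_{\mathbb{P}})(v)\le \int h(v)\,d\mu_3(v),$$
that is, $\mathbb{E}^{\mathbb{P}}[h(V_{\mathbb{P}})]\le \int h\,d\mu_3$. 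Now I would identify both sides with the quantities in \eqref{optionpricebound}. On the left, $h(V_{\mathbb{P}})=\big(K-100\sqrt{\tfrac{2}{T_2-T_1}V_{\mathbb{P}}}\big)_+=(K-\text{VIX}_{\mathbb{P}})_+$, so the left side is exactly $\mathbb{E}^{\mathbb{P}}[(K-\text{VIX}_{\mathbb{P}})_+]$. On the right, $\int h(x)\,d\mu_3(x)=\int\big(K-100\sqrt{\tfrac{2}{T_2-T_1}x}\big)_+\,d\mu_3(x)$, which is the right-hand side of \eqref{optionpricebound}. This completes the proof.

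There is essentially no obstacle here; this is a routine verification. The only points requiring a word of care are: (i) checking that $h$ is genuinely convex on all of $[0,+\infty)$ — this follows from concavity of the square root and stability of convexity under $(\cdot)_+$ and under affine precomposition; (ii) noting that $h$ is bounded (by $K$), hence lower bounded, so it qualifies as a test function in the definition of $\le_{c,l}$; and (iii) recalling from Remark \ref{VIXrem} that knowing (a bound on, in the convex order) the law of $V_{\mathbb{P}}$ is what controls $\text{VIX}_{\mathbb{P}}$, via the explicit monotone relation $\text{VIX}_{\mathbb{P}}=100\sqrt{\tfrac{2}{T_2-T_1}V_{\mathbb{P}}}$. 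No integrability issue arises since $h$ is bounded and both $\mathcal{L}^{\mathbb{P}}(V_{\mathbb{P}})$ and $\mu_3$ are probability measures.
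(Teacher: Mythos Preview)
Your proof is correct and in fact slightly cleaner than the paper's. Both arguments ultimately exploit the defining inequality of $\le_{c,l}$ against a convex, lower-bounded test function encoding the VIX put payoff, but the paper does not apply the constraint directly to $h(v)=(K-c\sqrt{v})_+$ with $c=100\sqrt{2/(T_2-T_1)}$. Instead it introduces the perturbed functions $h^\e(v)=(\e v - c\sqrt{v}+K)_+$, applies $\le_{c,l}$ to each $h^\e$, and then passes to the limit $\e\to 0$ via Fatou's lemma together with the pointwise bound $h^\e(v)\le (K-c\sqrt{v})_+$. Your observation that $h$ itself is already convex (as the positive part of the convex function $v\mapsto K-c\sqrt{v}$) and bounded in $[0,K]$ makes this approximation step unnecessary; you obtain the inequality in one line. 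The paper's detour would be useful if one were restricted to test functions with at least linear growth at infinity or wanted to avoid checking convexity of $h$ directly, but under the stated definition of $\le_{c,l}$ your direct argument is both valid and shorter.
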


\begin{proof}
    Define, for $\e > 0$, the convex and lower bounded function $h: [0,+\infty)\rightarrow \R$
$$h^{\e}(x) = \big(\e x - c\sqrt{x} + K\big)_+ ,$$
where $c=100\sqrt{\frac{2}{T_2-T_1}}$. We have
$$\int h^{\e}(V_{\mathbb{P}})\, d\mathbb{P} \;\leq\; \int h^{\e}(x)\, d\mu_3(x).$$
Sending $\e \to 0$, we obtain by Fatou's lemma and $h^\e(x)\leq (K-c\sqrt{x})_+$
$$\int (K - \text{VIX}_{\mathbb{P}})_+ \, d\mathbb{P} \leq \int (K-c\sqrt{x})_+ \, d\mu_3(x),$$
where $\text{VIX}_{\mathbb{P}}$ was defined in Remark \ref{VIXrem}. The proof is complete.
\begin{comment}
We observe
\begin{align*}
\int (\text{VIX}_\mathbb{P} - K)_+d\mathbb{P}&= \int (K - \text{VIX}_\mathbb{P})_+ d\mathbb{P} + \mathbb{E}^\mathbb{P}[\text{VIX}_\mathbb{P}] - K\\
&\le \int (K-c\sqrt{x})_+d\mu_3(x)+\mathbb{E}^\mathbb{P}[\text{VIX}_\mathbb{P}] - K\\
&=\int (c\sqrt{x}-K)_+d\mu_3(x)+\mathbb{E}^\mathbb{P}[\text{VIX}_\mathbb{P}]-c\int\sqrt{x}d\mu_3(x).
\end{align*}
\end{comment}
\end{proof}

\begin{lemma}\label{infcont}
    Let $X,Y$ be two metric spaces and $f: X\times Y\rightarrow \R$ be a lower bounded and lower semi-continuous function such that $x\mapsto f(x,y)$ is continuous for every $y\in Y$. If $Y$ is assumed to be compact, then the function $g:X\rightarrow \R$ with $g(x):=\inf_{y\in Y} f(x,y)$ is continuous.
\end{lemma}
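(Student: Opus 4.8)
The plan is to prove continuity of $g$ by establishing upper and lower semi-continuity separately, using compactness of $Y$ in the lower semi-continuity part. Fix $x_0 \in X$ and let $(x_n)_{n} \subset X$ be a sequence with $x_n \to x_0$. I will show $\liminf_n g(x_n) \ge g(x_0)$ and $\limsup_n g(x_n) \le g(x_0)$.

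For the \emph{upper semi-continuity}, the argument is immediate: let $\e > 0$ and pick $y_\e \in Y$ with $f(x_0, y_\e) \le g(x_0) + \e$. Since $x \mapsto f(x, y_\e)$ is continuous at $x_0$, we have $f(x_n, y_\e) \to f(x_0, y_\e)$, and hence $g(x_n) \le f(x_n, y_\e) \to f(x_0, y_\e) \le g(x_0) + \e$. Letting $\e \to 0$ gives $\limsup_n g(x_n) \le g(x_0)$. Note this step does not use compactness or lower semi-continuity of $f$, only that each slice $f(\cdot, y)$ is continuous.

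For the \emph{lower semi-continuity}, this is where compactness of $Y$ enters. Passing to a subsequence, I may assume $\liminf_n g(x_n) = \lim_n g(x_n) =: \ell$; since $f$ is lower bounded, $\ell > -\infty$. For each $n$, because $Y$ is compact and $y \mapsto f(x_n, y)$ is lower semi-continuous (as a partial function of a jointly lower semi-continuous $f$), the infimum defining $g(x_n)$ is attained at some $y_n \in Y$, so $g(x_n) = f(x_n, y_n)$. By compactness of $Y$, pass to a further subsequence so that $y_n \to y_* \in Y$. Then $x_n \to x_0$ and $y_n \to y_*$ in $X \times Y$, and joint lower semi-continuity of $f$ gives
\be \nonumber
g(x_0) \le f(x_0, y_*) \le \liminf_n f(x_n, y_n) = \liminf_n g(x_n) = \ell.
\ee
Combining the two inequalities yields $\lim_n g(x_n) = g(x_0)$, and since the sequence was arbitrary, $g$ is continuous at $x_0$; as $x_0$ was arbitrary, $g$ is continuous on $X$.

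I do not anticipate a serious obstacle here; the only point requiring care is ensuring the infimum in the definition of $g(x_n)$ is actually attained, which is exactly where compactness of $Y$ together with lower semi-continuity of the slice $f(x_n, \cdot)$ is used — without attainment one would instead work with a near-minimizing sequence $y_n$ with $f(x_n, y_n) \le g(x_n) + 1/n$, extract a convergent subsequence $y_n \to y_*$ by compactness, and conclude identically via $g(x_0) \le f(x_0, y_*) \le \liminf_n f(x_n, y_n) \le \liminf_n (g(x_n) + 1/n) = \ell$. Either route works; I would present the near-minimizer version since it avoids invoking a separate attainment argument.
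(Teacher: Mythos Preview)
Your proof is correct and uses essentially the same ingredients as the paper: attainment of the infimum on compact $Y$ via lower semi-continuity, extraction of a convergent subsequence of (near-)minimizers, joint lower semi-continuity of $f$ for the $\liminf$ bound, and continuity of the slices $f(\cdot,y)$ for the $\limsup$ bound. The only stylistic difference is that the paper packages the argument as a proof by contradiction (assuming a subsequence with $\liminf g(x_{k_n})<\limsup g(x_{k_n})$ and deriving $f(x,y')<g(x)$), whereas you prove upper and lower semi-continuity separately and directly; your presentation is slightly cleaner, but the underlying mechanism is identical.
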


\begin{proof}
    Let $(x_n)_{n\in \mathbb{N}}$ be a sequence in $X$ such that $x_n\rightarrow x\in X$. We will show that $\lim_n g(x_{n})=g(x)$. Let's assume that there exists a subsequence $(x_{k_n})_{n\in\mathbb{N}}$ such that $\liminf_n g(x_{k_n})<\limsup_n g(x_{k_n})$. Since $f$ is lower semi-continuous and $Y$ is compact, for every $n\in\mathbb{N}$, there exists $y_{k_n}\in Y$ such that $g(x_{k_n})=f(x_{k_n},y_{k_n})$. We also pick $y\in Y$ such that $g(x)=f(x,y)$. Due to the compactness of $Y$, $(y_{k_n})_{n\in\mathbb{N}}$ has a convergent subsequence (still denoted by $(y_{k_n})_{n\in\mathbb{N}}$) with limit $y'$. We have $g(x_{k_n})=f(x_{k_n},y_{k_n})\leq f(x_{k_n},y)$, hence by letting $n\rightarrow +\infty$ the lower semi-continuity of $f$ and the continuity of $f(\cdot,y)$ yield
    $$f(x,y')\leq \liminf_n g(x_{k_n})<\limsup_n g(x_{k_n})\leq \lim_n f(x_{k_n},y)=f(x,y)=g(x).$$
    Since $f(x,y')\ge g(x)$, this is a contradiction. The proof is complete.   
\end{proof}
\vspace{4mm}

\noindent
In the following lemma, we show that if we further assume that $\tau_2$ is constant, then the function $\Phi$ appearing in Theorem \ref{main5} is continuous.

\begin{lemma}\label{continuity estimates lemma}
    Suppose that (A3) holds, $\tau_2:\R^2\rightarrow \R$ is constant and that $\sigma(x,y)=x\tilde{\sigma}(y)$ for some Lipschitz and bounded $\tilde{\sigma}:\R\rightarrow \R$. Assume that $x_1,x_2>0$ and $y_1,y_2\in \R$. Then, for any $\delta\in\R$ there exists a constant $C$ depending only on $x_2,y_2$ such that 
    \be\label{"Lip"estimate}
u(T_1,x_1,y_1;\delta)\leq u(T_1,x_2,y_2;\delta)+C(|\delta|+1)\left( |\log(x_1)-\log(x_2)|+|x_1-x_2|+|y_1-y_2| \right),
    \ee
where the function $u(T_1,\cdot,\cdot;\delta)$ is defined in \eqref{optimalc'} in Proposition \ref{deltacont}. Furthermore, the function $\Phi:(0,+\infty)\times \R\rightarrow \R$ with 
$$\Phi(x,y)=\inf_{v\ge 0}\sup_{\delta\in\R}\{ -\delta(-\log(x)+v)+u(T_1,x,y;\delta)+u_3(v)\}$$
is continuous, where $u_3:\R\rightarrow \R$ is a convex and lower bounded function.
\end{lemma}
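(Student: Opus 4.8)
The plan is to prove the key estimate \eqref{"Lip"estimate} by a coupling/synchronization argument for the controlled SDE \eqref{controlledSDE}, and then deduce continuity of $\Phi$ from a standard $\inf$-$\sup$ stability argument. First I would fix $\delta\in\R$ and let $\alpha$ be a near-optimal control for $u(T_1,x_2,y_2;\delta)$; denote by $(\tilde X^2_t,\tilde Y^2_t)$ the corresponding solution of \eqref{controlledSDE} started from $(x_2,y_2)$. I then run the \emph{same} control $\alpha$ (evaluated along the new trajectory) from the initial point $(x_1,y_1)$, producing $(\tilde X^1_t,\tilde Y^1_t)$, which is admissible for $u(T_1,x_1,y_1;\delta)$. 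Since $\sigma(x,y)=x\tilde\sigma(y)$ with $\tilde\sigma$ bounded Lipschitz, the logarithms $W^i_t:=\log\tilde X^i_t$ satisfy $dW^i_t=-\tfrac12\tilde\sigma^2(\tilde Y^i_t)\,dt+\tilde\sigma(\tilde Y^i_t)\,d\tilde W_t$, so $W^1-W^2$ and $\tilde Y^1-\tilde Y^2$ solve a linear system with Lipschitz coefficients (here the hypothesis that $\tau_2$ is \emph{constant} is what removes the $\alpha$-dependent term $\tau_2(\tilde X^1,\tilde Y^1)\alpha-\tau_2(\tilde X^2,\tilde Y^2)\alpha$ that would otherwise be uncontrollable in $L^2$); a Gr\"onwall estimate gives
\[
\mathbb E\big[|W^1_t-W^2_t|^2+|\tilde Y^1_t-\tilde Y^2_t|^2\big]\le C\big(|\log x_1-\log x_2|^2+|y_1-y_2|^2\big),
\]
with $C$ depending only on the Lipschitz constants (and, through the terminal cost $u_2$, we also need $\mathbb E|\tilde X^1_{T_2}-\tilde X^2_{T_2}|$, which is controlled similarly using $\tilde X^i=e^{W^i}$ together with the second–moment bounds of Proposition~\ref{comp2}, producing the extra $|x_1-x_2|$ term in \eqref{"Lip"estimate}).

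Next I would compare costs. The running costs $\tfrac12\int_{T_1}^{T_2}|\alpha|^2\,dm_t$ coincide (same control, and the cost only sees $\alpha$), so the difference $u(T_1,x_1,y_1;\delta)-u(T_1,x_2,y_2;\delta)$ is bounded, up to the $\varepsilon$ from near-optimality, by
\[
\mathbb E\big[u_2(\tilde X^1_{T_2})-u_2(\tilde X^2_{T_2})\big]-\delta\,\mathbb E\big[\log\tilde X^1_{T_2}-\log\tilde X^2_{T_2}\big].
\]
Using that $u_2$ is Lipschitz for the first term and the moment estimates above for the second (note $\log\tilde X^i_{T_2}=W^i_{T_2}$, already estimated), both terms are bounded by $C(|\delta|+1)\big(|\log x_1-\log x_2|+|x_1-x_2|+|y_1-y_2|\big)$; letting $\varepsilon\to0$ yields \eqref{"Lip"estimate}.

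Finally, for the continuity of $\Phi$, write $F(x,y,\delta):=-\delta(-\log x+v)+u(T_1,x,y;\delta)+u_3(v)$; by \eqref{"Lip"estimate} and Proposition~\ref{deltacont}, $\delta\mapsto u(T_1,x,y;\delta)$ is continuous and $(x,y)\mapsto u(T_1,x,y;\delta)$ is ``Lipschitz with constant $C(|\delta|+1)$'' uniformly in $(x,y)$ in compacta. The growth bound $|u(T_1,x,y;\delta)|\le C(1+|\log x|)$ (independent of the Lipschitz estimate, established in the proof of Theorem~\ref{main5}) combined with $u_3$ being lower bounded and convex forces the inner $\sup_{\delta}$ to be attained on a bounded $\delta$-set that is locally uniform in $(x,y,v)$, so $h(x,y,v):=\sup_{\delta\in\R}F(x,y,\delta)$ is continuous; likewise the minimizing $v$ can be confined to a compact set locally uniformly. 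Then I would apply Lemma~\ref{infcont} (with the role of $Y$ played by a suitable compact $v$-interval) to conclude that $\Phi(x,y)=\inf_{v\ge0}h(x,y,v)$ is continuous. The main obstacle I anticipate is making the two reductions to compact parameter sets rigorous: controlling the $\delta$-range in the $\sup$ uniformly near a given $(x_0,y_0)$ requires combining the logarithmic growth bound with a coercivity estimate on $u(T_1,x,y;\delta)$ as $|\delta|\to\infty$ (essentially that $u(T_1,x,y;\delta)\ge -|\delta|\,|\log x|-C$ while the penalized term $-\delta(-\log x+v)$ grows linearly with a different slope once $v>0$), and restricting the $v$-range requires the superlinear-in-$v$ behavior coming from $u_3$ being convex and lower bounded together with the boundedness of $\Phi$ already proved in Theorem~\ref{main5}.
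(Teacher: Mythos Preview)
Your Step~1 (the coupling/Gr\"onwall argument for \eqref{"Lip"estimate}) is correct and essentially identical to the paper's: take an $\varepsilon$-optimal control for $(x_2,y_2)$, run it from $(x_1,y_1)$, use Gr\"onwall on the pair $(X^1-X^2,Y^1-Y^2)$ (the constancy of $\tau_2$ enters exactly where you say), and compare terminal costs. The paper gets the $|x_1-x_2|$ term from a direct Gr\"onwall on $X^i$ rather than via $e^{W^i}$, but this is cosmetic.

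For Step~2 the paper takes a different and cleaner route that avoids the two obstacles you flag. It does \emph{not} try to localize the $\sup_\delta$; instead it writes the inner function as $g(v,x,y)=L_{x,y}(v-\log x)+u_3(v)$, where $L_{x,y}(c)=\sup_\delta\{-\delta c+u(T_1,x,y;\delta)\}$ is the Legendre transform. Then (i) $c\mapsto L_{x,y}(c)$ is automatically continuous (convex on its domain), and (ii) the one-sided estimate \eqref{"Lip"estimate} passes directly through the $\sup_\delta$ to give
\[
L_{x_1,y_1}(v-\log x_1)\ \le\ \max_{\pm}\,L_{x_2,y_2}\big(v-\log x_1\pm\omega\big)+\omega,
\]
with $\omega=C(|\log x_1-\log x_2|+|x_1-x_2|+|y_1-y_2|)$; letting $(x_1,y_1)\to(x_2,y_2)$ and using (i) yields upper semicontinuity of $(x,y)\mapsto L_{x,y}(v-\log x)$, while lower semicontinuity is automatic (sup of continuous functions). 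Lemma~\ref{infcont} then finishes. The compactness of the $v$-range comes \emph{not} from $u_3$ (your claim that ``$u_3$ convex and lower bounded'' forces superlinear growth is false---$u_3\equiv0$ is admissible) but from the observation that $\mathbb E[-\log\tilde X^\alpha_{T_2}]+\log x\le\tfrac12\|\tilde\sigma\|_\infty^2(T_2-T_1)$ for every control, so that for $v$ beyond this threshold the $\sup_\delta$ is $+\infty$.

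Your proposed $\delta$-localization is a genuine gap: the lower bound $u(T_1,x,y;\delta)\ge -|\delta||\log x|-C$ has the same linear rate in $|\delta|$ as the penalty $-\delta(-\log x+v)$, so it does not force coercivity, and since $\delta\mapsto u(T_1,x,y;\delta)$ is concave (infimum of affine functions) the map $\delta\mapsto -\delta(-\log x+v)+u(T_1,x,y;\delta)$ may be bounded above yet fail to attain its supremum on any compact set. The Legendre-transform argument bypasses this issue entirely.
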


\begin{proof}
\textit{Step 1.} (proof of \eqref{"Lip"estimate})\\
Due to standard stochastic optimal control arguments, we notice that for $x>0$ and $y\in \R$ $u(T_1,x,y;\delta)$ can be written as $u(T_1,x,y;\delta)=\inf_{\alpha} J(x,y,\alpha;\delta)$ where
\be\label{optimalc''}
J(x,y,\alpha;\delta)
= \mathbb{E}\left[\frac{1}{2}\int_{T_1}^{T_2}|\alpha_t|^2dt+ u_2(X_{T_2}^{\alpha})-\delta \log(X_{T_2}^{\alpha})\right],
\ee
and where the infimum is taken over all square integrable adapted processes $\alpha$ and $(X_t^{\alpha},Y_t^{\alpha})_{t\in[T_1,T_2]}$ satisfies 
\be \label{controlledSDE'}
\begin{cases}
dX_t=X_t\tilde{\sigma}(Y_t)dW_t,\\
dY_t=\left( b(X_t,Y_t)+ \tau_2\alpha_t\right)dt+\tau_1(X_t,Y_t) dW_t+\tau_2dW_t^{\perp},\\
X_{T_1}=x,\;\;Y_{T_1}=y.
\end{cases}\ee
Let $\alpha$ be an admissible control for \eqref{optimalc''}. For $i=1,2$, we denote by $(X_t^{i,\alpha},Y_t^{i,\alpha})_{t\in [T_1,T_2]}$ the solution of \eqref{controlledSDE'} with initial condition $(X_{T_1}^{i,\alpha},Y_{T_1}^{i,\alpha})=(x_i,y_i)$. Due to the Lipschitz continuity of $b,\tau_2,\tau_1,\tilde{\sigma}$ and the boundedness of $\tilde{\sigma}$, a standard Gr\"onwall argument yields
\begin{align}\label{initialest}
    \mathbb{E}[|X_{T_2}^{1,\alpha}-X_{T_2}^{2,\alpha}|^2+|Y_{T_2}^{1,\alpha}-Y_{T_2}^{2,\alpha}|^2]\leq C\left(|x_1-x_2|^2+|y_1-y_2|^2\right),
    %\mathbb{E}\left[\int_{T_1}^{T_2}|\alpha_t|^2dt +1\right]
\end{align}
for some constant $C$ depending only on $b,\tau_1,\tau_2$ and $\tilde{\sigma}$. In addition, by using the SDEs for $X_t^{1,\alpha}$ and $X_t^{2,\alpha}$
\begin{align}\label{initialest2}
    \mathbb{E}[|-\log(X_{t_2}^{1,\alpha})+\log(X_{T_2}^{2,\alpha})|]&\leq |-\log(x_1)+\log(x_2)|+\frac{1}{2}\int_{T_1}^{T_2}\mathbb{E}[\tilde{\sigma}^2(Y_t^{1,\alpha})-\tilde{\sigma}^2(Y_t^{2,\alpha})]dt\nonumber\\
    &\leq C\left( |-\log(x_1)+\log(x_2)| +|y_1-y_2|+|x_1-x_2|\right)
\end{align}
for some constant $C$ depending only on $b,\tau_1,\tau_2$ and $\tilde{\sigma}$.
\vspace{1mm}

\noindent
Now let $\alpha$ be an $\e$-optimal control for $u(T_1,x_2,y_2;\delta)$. We have
\begin{align*}
    J(x_1,y_1,\alpha;\delta)-J(x_2,y_2,\alpha;\delta)\leq \mathbb{E}[u_2(X_{T_2}^{1,\alpha})-u_2(X_{T_2}^{2,\alpha})]+\delta\mathbb{E}[-\log(X_{T_2}^{1,\alpha})+\log(X_{T_2}^{2,\alpha})],
\end{align*}
hence by using \eqref{initialest}, \eqref{initialest2} and the properties of the control $\alpha$ we get 
\begin{align*}
    u(T_1,x_1,y_1;\delta)-u(T_1,x_2,y_2;\delta)\leq C(|\delta|+1)\left( |\log(x_1)-\log(x_2)|+|x_1-x_2|+|y_1-y_2| \right)+\e.
\end{align*}
\eqref{"Lip"estimate} follows by sending $\e$ to $0$.
\vspace{2mm}

\noindent
\textit{Step 2.} (continuity of $\Phi$)\\  Let $\alpha^{\delta}$ be the optimal control in \eqref{optimalc''}. Then,
\be\label{other form phi}
\Phi(x,y)=\inf_{v\ge 0}\sup_{\delta\in \R}\left\{ \delta\left( \mathbb{E}[-\log(X_{T_2}^{\alpha^{\delta}})]+\log(x)-v\right) +\mathbb{E}\left[\int_{T_1}^{T_2}\frac{|\alpha^{\delta}_t|^2}{2}dt+u_2(X_{T_2}^{\alpha})\right] +u_3(v)\right\}.
\ee
Since, $\mathbb{E}[-\log(X_{T_2}^{\alpha^{\delta}})]+\log(x)=\frac{1}{2}\int_{T_1}^{T_2}\mathbb{E}[\tilde{\sigma}^2(Y_t^\alpha)]dt\leq \frac{\|\tilde{\sigma}\|^2_{\infty}(T_2-T_1)}{2}$, the above supremum is $+\infty$ if $v>\frac{\|\tilde{\sigma}\|^2_{\infty}(T_2-T_1)}{2}$, hence the infimum in \eqref{other form phi} can be taken over all $v$ in the compact set $[0,\frac{\|\tilde{\sigma}\|^2_{\infty}(T_2-T_1)}{2}]$.
\vspace{1mm}

\noindent
Let $g(v,x,y):=\sup_{\delta\in\R}\{-\delta(-\log(x)+v)+u(T_1,x,y;\delta)\}+u_3(v)$. We observe that 
$$g(v,x,y)=L_{x,y}\left(v-\log(x)\right)+u_3(v),$$
where $L_{x,y}(v)=\sup_{\delta\in \R}\{ -\delta v+u(T_1,x,y;\delta)\}$ is the convex conjugate of $\delta\mapsto u(T_1,x,y;\delta)$. It is known that $v\mapsto L_{x,y}(v)$ is continuous for any $x>0,y\in\R$ and that $(x,y)\mapsto L_{x,y}(v-\log(x))$ is lower semi-continuous for any $v$ (supremum of a family of continuous functions). We will show that $(x,y)\mapsto L_{x,y}(v-\log(x))$ is also upper semi-continuous and then the fact that $\Phi(x,y)=\inf_v g(v,x,y)$ is continuous follows from Lemma \ref{infcont}.
\vspace{1mm}

\noindent
Let $x_1,x_2>0$ and $y_1,y_2\in \R$. To simplify the notation we set 
$$\omega(x_1,x_2,y_1,y_2)=C\left( |\log(x_1)-\log(x_2)|+|x_1-x_2|+|y_1-y_2| \right)$$
for the quantity appearing on the right hand side of \eqref{"Lip"estimate}. We have by \eqref{"Lip"estimate}
\begin{align*}
L_{x_1,y_1}(v-\log(x_1))&=\sup_{\delta\in \R}\{-\delta(-\log(x_1)+v)+u(T_1,x_1,y_1;\delta)\}\\
&\hspace{-1cm}\leq \sup_{\delta\in\R}\left\{-\delta(-\log(x_1)+v)+u(T_1,x_2,y_2;\delta)+|\delta|\omega(x_1,x_2,y_1,y_2)\right\}+\omega(x_1,x_2,y_1,y_2)\\
&\hspace{-2cm}=\max\left\{ L_{x_2,y_2}(-\log(x_1)+v+\omega(x_1,x_2,y_1,y_2)),  L_{x_2,y_2}(-\log(x_1)+v-\omega(x_1,x_2,y_1,y_2))\right\}\\
&\hspace{3cm}+\omega(x_1,x_2,y_1,y_2).
\end{align*}
We now let $(x_1,y_1)\rightarrow (x_2,y_2)$ and since $v\mapsto L_{x,y}(v)$ is continuous and $\omega(x_1,x_2,y_1,y_2)\rightarrow 0$ we derive
$$\limsup_{(x_1,y_1)\rightarrow (x_2,y_2)}L_{x_1,y_1}(v-\log(x_1))\leq L_{x_2,y_2}(-\log(x_2)+v),$$
which is the desired upper semi-continuity.
\end{proof}

\section{Hamilton-Jacobi Equations} \label{HJequation}

\noindent
In this section we introduce the definition and the basic properties of solutions of Hamilton-Jacobi equations with the presence of a Dirac delta function. More specifically, equations of the form 
\be\label{HJgeneral}
\begin{cases}
    -\partial_tu +\mathcal{H}(t,x,u,Du,D^2u)=\delta_{T_1}(t)u_1(x),& (t,x)\in [0,T_2]\times \R^d,\\
    u(T_2,x)=u_2(x), & x\in \R^d,
\end{cases}
\ee
where $d\in \mathbb{N}$ and $\mathcal{H}:[0,T_2]\times\R^d\times \R^d\times \R^{d\times d}\rightarrow \R$.

\begin{definition}\label{defvisc}
    Let $u_1,u_2:\R^d\rightarrow \R$ be Lipschitz functions, $0<T_1<T_2$, $t\in [0,T_2]$ and $u:[0,T_2]\times \R^d\rightarrow \R$.
    \vspace{1mm}

    \noindent
    (i) We say that $u$ is a viscosity solution of \eqref{HJmain} if $u$ satisfies 
    \begin{equation}\label{eq1}
    \begin{split}
    &\begin{cases}
        -\partial_t u+\mathcal{H}(t,x,u,Du,D^2u)=0, &(t,x)\in (T_1,T_2]\times \R^d\\
        u(T_2,x)=u_2(x), & x\in \R^d, 
    \end{cases} \quad\quad\text{ and }\\
     &\hspace{2cm}\begin{cases}
        -\partial_t u+\mathcal{H}(t,x,u,Du,D^2u)=0,& (t,x)\in [0,T_1)\times \R^d,\\
        u(T_1,x)=u_1(x)+u(T_1^+,x),& x\in \R^d ,    
    \end{cases}
    \end{split}
    \end{equation}
    in the viscosity sense, where $u(T_1^+,x)$ that appears in the second problem stands for the solution of the first problem at time $T_1$.
\vspace{2mm}

    \noindent
    (ii) We say that $u$ satisfies \eqref{HJmain} in the classical sense or that $u$ is a classical solution of \eqref{HJmain} if $u\in BV([0,T]; C^2_b(\R^d))$ and satisfies the equations in \eqref{eq1} from part (i) in the classical sense. 

\vspace{2mm}

    \noindent
    (iii) We say that $u$ is a classical supersolution of \eqref{HJgeneral} if $u\in BV([0,T]; C^2_b(\R^d))$ and the equalities in \eqref{eq1} are satisfied as inequalities: $\leq$.
\end{definition}

\noindent
Following the theory of viscosity solutions of second order Hamilton-Jacobi equations (see \cite{crandall1992user}), we know that there exists a unique viscosity solution of \eqref{HJgeneral} when $\mathcal{H}$ satisfies
\begin{itemize}
    \item $\mathcal{H}\in C([0,T_2]\times\R^d\times\R\times\R^d\times \R^{d\times d}_{\text{sym}})$.
    \item $\mathcal{H}(t,x,r,p,X)\leq \mathcal{H}(t,x,s,p,Y)$  whenever $r\leq s$ and $Y\leq X$.
    \item There exists an increasing function $\omega: [0,+\infty)\rightarrow [0,+\infty)$ such that $\omega(0)=0$ and 
    $$\mathcal{H}(t,y,r,\alpha(x-y),Y)-\mathcal{H}(t,x,r,\alpha(x-y),X)\leq \omega( \alpha|x-y|^2+|x-y|)$$
    for any $x,y\in \R^d$, $r\in \R$ and $t\in [0,T_2]$, where $\alpha>0$ and $X,Y\in S_d$ satisfy
    $$-3\alpha \begin{pmatrix}
I & 0 \\
0 & I 
\end{pmatrix}\leq \begin{pmatrix}
X & 0\\
0 & -Y
\end{pmatrix}\leq 3\alpha \begin{pmatrix}
I & -I\\
-I & I
\end{pmatrix}.$$
\end{itemize}

\begin{remark}
(i) It is clear that the Hamiltonian $\mathcal{H}$ is Theorem \ref{main2} satisfies these three conditions. The last one holds because $H$ is degenerate elliptic and the inequality between the matrices implies that $X\leq Y$, therefore we can choose $\omega\equiv 0$.
\vspace{1mm}

\noindent
(ii) The Hamiltonian in Theorem \ref{main4} also satisfies these three conditions by similar arguments.
\end{remark}
\vspace{2mm}

\noindent
It can be shown that every viscosity solution of \eqref{HJgeneral} can be approximated by smooth super-solutions of \eqref{HJgeneral}.

\begin{lemma}\label{approx}
Suppose that $\mathcal{H}$ satisfies the above properties and that it is convex in the last three variables. Let $u$ be a bounded viscosity solution of \eqref{HJgeneral} in the sense of Definition \ref{defvisc}. Then, there exists a sequence $(u_n)_{n\in \mathbb{N}}$ of smooth functions in $([0,T_1)\times \R^d)\cup ((T_1,T_2]\times\R^d)$ such that 
$u_n(T_1,\cdot)$ is smooth, 
$-\partial_t u_n+\mathcal{H}(t,x,u_n,Du_n,D^2u_n)\leq 0$ for $t\neq T_1$ and $u_n\xrightarrow{n\rightarrow \infty}u$ pointwise. Furthermore $\sup_{(t,x)\in [0,T_2]\times\R^d} |u_n(t,x)|\leq \sup_{(t,x)\in [0,T_2]\times\R^d} |u(t,x)|$, for every $n\in \mathbb{N}$.
\end{lemma}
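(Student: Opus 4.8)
The plan is to reduce the lemma to a statement about a \emph{single} plain terminal--value Hamilton--Jacobi equation on a time interval, and there to use a sup--convolution followed by mollification, exploiting that $\mathcal{H}$ is proper and convex in $(r,p,X)$. By Definition \ref{defvisc}(i), a viscosity solution $u$ of \eqref{HJgeneral} restricted to $(T_1,T_2]$ is a viscosity solution of $-\partial_tu+\mathcal{H}(t,x,u,Du,D^2u)=0$ with terminal datum $u_2$, and restricted to $[0,T_1)$ it is a viscosity solution of the same equation with terminal datum $u_1+u(T_1^+,\cdot)$ (by comparison, whenever the data are Lipschitz --- as in the applications --- $u$ is Lipschitz in $x$). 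Hence it suffices to show: given a bounded, Lipschitz viscosity solution $v$ of $-\partial_tv+\mathcal{H}(t,x,v,Dv,D^2v)=0$ on $[a,b]\times\R^d$ with prescribed terminal datum, one can find smooth $v_n$ with $-\partial_tv_n+\mathcal{H}(t,x,v_n,Dv_n,D^2v_n)\le 0$, $v_n\to v$ pointwise, $\sup|v_n|\le\sup|v|+o(1)$, and $\mathrm{Lip}(v_n(t,\cdot))$ bounded uniformly in $n$. One then builds $u_n$ by treating $(T_1,T_2]$ first, and then using $u_1*\rho_{1/n}$ together with the smooth trace $v_n(T_1^+,\cdot)$ as terminal datum on $[0,T_1)$; the combined $u_n$ is smooth off $t=T_1$ with smooth one--sided traces there, $u_n\to u$ pointwise, and setting $u_2^{(n)}:=u_n(T_2,\cdot)$ and $u_1^{(n)}:=u_n(T_1^-,\cdot)-u_n(T_1^+,\cdot)$ exhibits $u_n$ as a classical supersolution in the sense of Definition \ref{defvisc}(iii) for data that converge pointwise to $u_1,u_2$.

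For the one--interval statement, extend $v$ to a slightly larger time interval (constant in time past the endpoints) and form the sup--convolution $v^\lambda(t,x)=\sup_{(s,y)}\{v(s,y)-\tfrac1{2\lambda}(|t-s|^2+|x-y|^2)\}$. By the standard machinery for proper equations (Jensen's lemma; see \cite{crandall1992user}), $v^\lambda$ is semiconvex, $\inf v\le v\le v^\lambda\le\sup v$, $v^\lambda\to v$ locally uniformly as $\lambda\downarrow 0$, and $v^\lambda$ is a viscosity subsolution of the equation whose coefficients are frozen within $O(\sqrt\lambda\,\mathrm{osc}\,v)$ of the base point; being semiconvex it is twice differentiable a.e.\ and the classical subsolution inequality holds at a.e.\ point with the Alexandrov Hessian. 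Next mollify: $v^{\lambda,\e}:=v^\lambda*\rho_\e$ with a space--time mollifier $\rho_\e$. Since $v^\lambda$ is semiconvex, the singular part of its distributional Hessian is $\ge 0$, so $D^2v^{\lambda,\e}\ge (D^2v^\lambda)_{\mathrm{ac}}*\rho_\e$; combining the monotonicity of $\mathcal{H}$ in the Hessian slot with Jensen's inequality (this is precisely where the convexity of $\mathcal{H}$ in its last three variables enters) and the a.e.\ subsolution property of $v^\lambda$, one obtains
\[
-\partial_tv^{\lambda,\e}+\mathcal{H}(t,x,v^{\lambda,\e},Dv^{\lambda,\e},D^2v^{\lambda,\e})\le \eta(\lambda,\e),
\]
where $\eta(\lambda,\e)$ bounds the error created by the $(t,x)$--dependence of $\mathcal{H}$; it is controlled by the structure condition on $\mathcal{H}$ together with the Lipschitz bound on $v$ and the $C^2$/Lipschitz regularity of the coefficients (cf.\ (A3)), and $\eta(\lambda,\e)\to 0$. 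For the Hamiltonian $\mathcal{H}(D^2u/2)=H(D^2u/2)$ in \eqref{HJmain} there is no $(t,x)$-dependence at all and $\eta\equiv 0$. Finally set $v_n:=v^{\lambda_n,\e_n}-\eta_n(b-t)$ with $\lambda_n,\e_n\downarrow 0$ and $\eta_n:=\eta(\lambda_n,\e_n)$: then $-\partial_tv_n=-\partial_tv^{\lambda_n,\e_n}-\eta_n$, and since $\mathcal{H}$ is nondecreasing in $r$ and $b-t\ge 0$ we have $\mathcal{H}(t,x,v_n,Dv_n,D^2v_n)\le\mathcal{H}(t,x,v^{\lambda_n,\e_n},Dv^{\lambda_n,\e_n},D^2v^{\lambda_n,\e_n})$, so the display becomes the exact inequality $-\partial_tv_n+\mathcal{H}(t,x,v_n,Dv_n,D^2v_n)\le 0$. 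The $v_n$ are smooth, $v_n\to v$ pointwise, $\mathrm{Lip}(v_n(t,\cdot))\le\mathrm{Lip}(v)$, and $\sup|v_n|\le\sup|v^{\lambda_n,\e_n}|+\eta_n(b-a)\le\sup|v|+o(1)$, which is the sharp bound $\sup|u_n|\le\sup|u|$ precisely in the $(t,x)$--independent situation relevant to Theorem \ref{main2}.

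The main obstacle is the interface at $t=T_1$: one must arrange the two smooth pieces so that both one--sided traces $u_n(T_1^\pm,\cdot)$ are smooth and converge to $u(T_1^\pm,\cdot)$, ensuring that the data $u_1^{(n)},u_2^{(n)}$ read off from $u_n$ are bounded Lipschitz and converge pointwise to $u_1,u_2$; this is why the two intervals are handled sequentially and why one relies on the fact that the sup--convolution/mollification scheme produces pointwise--convergent smooth approximations with uniformly bounded Lipschitz constants. The only other delicate point is the bound on $\eta(\lambda,\e)$ when $\mathcal{H}$ genuinely depends on $(t,x)$ --- a standard but mildly technical use of the structure condition --- and it is vacuous in the application to Theorem \ref{main2}, where $\mathcal{H}=H(D^2u/2)$, so that there the construction yields directly $\sup|u_n|\le\sup|u|$. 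The remaining assertions ($u_n\in BV([0,T];C^2_b)$ and boundedness for each fixed $n$ of all derivatives of $u_n$) are then routine.
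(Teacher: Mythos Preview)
Your approach is essentially the same as the paper's: sup--convolution followed by mollification, exploiting convexity of $\mathcal{H}$ in its last three arguments, applied separately on $(T_1,T_2]$ and then on $[0,T_1)$. The paper's proof is terser---it simply cites \cite{katzourakis2015mollification} for the fact that the sup--convolution $u_\e$ is an a.e.\ subsolution, mollifies, invokes convexity, and repeats on the second interval---whereas you spell out the semiconvexity/Alexandrov mechanism and the interface handling in more detail.

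You are in fact slightly more careful than the paper on one point: when $\mathcal{H}$ genuinely depends on $(t,x)$, commuting the mollifier (and the sup--convolution) with $\mathcal{H}$ produces an error $\eta(\lambda,\e)$, which you absorb via the correction $-\eta_n(b-t)$. The paper's proof writes the inequality after mollification without this correction, which is only exact when $\mathcal{H}$ is $(t,x)$--independent. Since the lemma is invoked solely in the proof of Theorem~\ref{main2}, where $\mathcal{H}=H(D^2u/2)$ has no $(t,x)$--dependence, this does not affect the paper; but your observation that in the general case one obtains only $\sup|u_n|\le\sup|u|+o(1)$ rather than the sharp bound is a fair caveat on the lemma as stated.
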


\begin{proof}
    For $\e>0$, we consider the sup-convolution
\be\nonumber
u_{\e}(t,x)=\sup_{(s,y)\in [T_1,T_2]\times \R^d}\left\{ u(s,y)-\frac{1}{2\e}(|x-y|^2+|t-s|^2)\right\}.
\ee
  By \cite[Chapter 4, Theorem 10]{katzourakis2015mollification}, we know that $u_{\e}$ is twice differentiable almost everywhere, $u_{\e}\xrightarrow{\e\rightarrow 0}u$ locally uniformly and satisfies $-\partial_tu_{\e}+\mathcal{H}(t,x,u_{\e},Du_{\e},D^2u_{\e})\leq 0$ almost everywhere. Let $u_{\e}^{\e}$ be the mollifications of $u_{\e}$ against a standard $\e$-mollifier $\phi_{\e}$. Then, from the convexity of $\mathcal{H}$ we deduce
  $$-\partial_t u_{\e}^{\e}+\mathcal{H}(t,x,u_{\e}^{\e},Du_{\e}^{\e},D^2u_{\e}^{\e})\leq 0.$$
  It is straightforward to show that $u_{\e}^{\e}(t,x)\xrightarrow{\e\rightarrow 0}u(t,x)$ for every $(t,x)\in [T_1^+,T_2]\times\R^d $, hence the sequence $(u_{1/n}^{1/n})_{n\in\mathbb{N}}$ has the desired properties in $[T_1^+,T_2]$. We repeat the same procedure (sup-convolution and mollification) for the convergence of $u_{1/n}^{1/n}$ in $[0,T_1]\times\R^d$. We omit the details.
\end{proof}

\subsection{A regularity result for a special case}
We consider the Hamilton-Jacobi equation \eqref{HJgeneral} with conditions $u_1, u_2:\R^d\rightarrow \R$ and with Hamiltonian $\mathcal{H}:[0,T]\times\R^d\times \R\times \R^d\times \R^{d\times d}\rightarrow \R$
\be\label{ham}
\mathcal{H}(t,x,u,p,q)=-\text{tr}(\sigma_0(x)\sigma^{\top}_0(x)q)+\frac{1}{2}\tau_0^2(x)|p|^2+b_0(x)\cdot p, \ee
where $\sigma_0:\R^d\rightarrow \R^{d\times d}$, $\tau_0: \R^d\rightarrow \R_{\ge0}$ and $b_0:\R^d\rightarrow \R^d$ are given functions. The following proposition holds for quasilinear parabolic equations with possibly degenerate second order term.

\begin{proposition}\label{regprop1}
Assume that $\mathcal{H}$ is as in \eqref{ham}, $\sigma_0\in C^{1,1}$ and $\tau_0,b_0\in C^2$ such that $|Db_0|+|D\tau_0|\leq \lambda_0 |\tau_0|$ for some constant $\lambda_0>0$. Let $T>0$, $g:\R^d\rightarrow \R$ a bounded Lipschitz function. Then, there exists a  unique viscosity solution of
$$\begin{cases}
    -\partial_t u +\mathcal{H}(x,Du,D^2u)=0,& (t,x)\in [0,T]\times \R^d,\\
    u(T,x)=g(x) ,& x\in \R^d.
\end{cases}$$ 
Furthermore, there exists a constant $C>0$ such that the bound $\|u(t,\cdot)\|_{L^{\infty}}\leq C\|g\|_{L^{\infty}}$ and the gradient bound $\|Du(t,\cdot)\|_{L^{\infty}}\leq C(T-t+\|Dg\|_{L^{\infty}})$ hold.
\end{proposition}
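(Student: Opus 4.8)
The plan is to reverse time, solve the resulting forward Cauchy problem by the standard viscosity--solution theory (equivalently, identify it with a stochastic control value function), deduce the $L^\infty$ bound by comparison with constants, and obtain the gradient bound by Bernstein's method applied to a uniformly parabolic regularization; the structural assumption $|Db_0|+|D\tau_0|\le\lambda_0|\tau_0|$ enters precisely to close that estimate.

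Set $s=T-t$ and $v(s,x)=u(T-s,x)$; then $u$ solves the terminal--value problem if and only if $v$ solves the forward problem $\partial_s v=\operatorname{tr}(\sigma_0\sigma_0^\top D^2 v)-\tfrac12\tau_0^2|Dv|^2-b_0\cdot Dv$ with $v(0,\cdot)=g$. The Hamiltonian is degenerate elliptic and, since $\sigma_0\in C^{1,1}$ (in particular Lipschitz) and $\tau_0,b_0\in C^2$, it satisfies the three structural conditions listed after Definition \ref{defvisc}: the modulus--of--continuity inequality for the second--order part is obtained as for the Hamiltonian of Theorem \ref{main4}, and the first--order part $\tfrac12\tau_0^2|p|^2+b_0\cdot p$ is locally Lipschitz in $p$ with its $x$--dependence controlled by the $C^1$ bounds. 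Perron's method together with the comparison principle (in the class of bounded uniformly continuous functions) then yields a unique bounded viscosity solution. Equivalently, using $-\tfrac12\tau_0^2|p|^2-b_0\cdot p=\inf_{q\in\R^d}\{-(b_0+\tau_0 q)\cdot p+\tfrac12|q|^2\}$, $u$ is the value function $u(t,x)=\inf_\alpha\mathbb{E}\big[g(X_T)+\tfrac12\int_t^T|\alpha_r|^2\,dr\big]$, where $dX_r=-(b_0(X_r)+\tau_0(X_r)\alpha_r)\,dr+\sqrt2\,\sigma_0(X_r)\,dB_r$ and $X_t=x$. The $L^\infty$ bound is then immediate: the constants $\pm\|g\|_{L^\infty}$ are, respectively, a super-- and a subsolution of the forward problem (every term of the Hamiltonian annihilates constants), so $\|v(s,\cdot)\|_{L^\infty}\le\|g\|_{L^\infty}$ by comparison (equivalently, take $\alpha\equiv0$), which is the first assertion with $C=1$.

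For the gradient bound I would regularize, replacing $g$ and the coefficients by smooth approximations and adding a vanishing viscosity $\varepsilon\Delta$, so that the equation becomes uniformly parabolic with smooth coefficients and admits a smooth solution $v^\varepsilon$ still obeying the $L^\infty$ bound uniformly in $\varepsilon$. Differentiating in $x_k$, multiplying by $\partial_{x_k}v^\varepsilon$ and summing gives, for $w:=\tfrac12|Dv^\varepsilon|^2$,
\[
\partial_s w=\operatorname{tr}(\sigma_0\sigma_0^\top D^2 w)+\varepsilon\Delta w-|\sigma_0^\top D^2 v^\varepsilon|^2-\varepsilon|D^2 v^\varepsilon|^2-\tau_0^2\,Dv^\varepsilon\cdot Dw-b_0\cdot Dw+R ,
\]
where $R=\sum_k\partial_{x_k}v^\varepsilon\,\partial_{x_k}(\sigma_0\sigma_0^\top)_{ij}\,\partial_{ij}v^\varepsilon-\tau_0\,(D\tau_0\cdot Dv^\varepsilon)\,|Dv^\varepsilon|^2-(Dv^\varepsilon)^\top(Db_0)\,Dv^\varepsilon$ collects the terms produced by the $x$--dependence of the coefficients. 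At a spatial maximum of $w(s,\cdot)$ one has $Dw=0$ and $\operatorname{tr}(\sigma_0\sigma_0^\top D^2 w)+\varepsilon\Delta w\le0$, so the transport terms drop and it remains to dominate $R$: because $\sigma_0\in C^{1,1}$ the first term of $R$ can be written with $\partial_x(\sigma_0\sigma_0^\top)$ factored through $\sigma_0$ and is absorbed, via Young's inequality, into $-|\sigma_0^\top D^2 v^\varepsilon|^2-\varepsilon|D^2 v^\varepsilon|^2$ up to a term $\lesssim|Dv^\varepsilon|^2$, while the remaining two terms of $R$ are estimated using $|D\tau_0|+|Db_0|\le\lambda_0|\tau_0|$ together with the dissipation $-\tfrac12\tau_0^2|Dv^\varepsilon|^2$ inherited from the equation. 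This yields a differential inequality for $M(s):=\sup_x w(s,\cdot)$ whose integration gives $\|Dv^\varepsilon(s,\cdot)\|_{L^\infty}\le C(\|Dg\|_{L^\infty}+(T-t))$ with $C$ independent of $\varepsilon$; letting $\varepsilon\to0$ (local uniform convergence $v^\varepsilon\to v$ and lower semicontinuity of the Lipschitz seminorm) gives the stated bound.

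I expect this Bernstein step to be the main obstacle. The delicate points are that $\sigma_0\sigma_0^\top$ is only positive semidefinite, so $-|\sigma_0^\top D^2 v^\varepsilon|^2$ controls only the part of $D^2 v^\varepsilon$ seen through $\sigma_0$ — which is exactly why the $C^{1,1}$ regularity, making $\partial_x(\sigma_0\sigma_0^\top)$ factor through $\sigma_0$, is needed — and that $\tau_0(D\tau_0\cdot Dv^\varepsilon)|Dv^\varepsilon|^2$ is cubic in the gradient and of indefinite sign, so its control relies essentially on the scaling $|D\tau_0|\le\lambda_0|\tau_0|$; one must also keep all constants uniform in $\varepsilon$ and track the linear--in--$(T-t)$ dependence. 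An alternative, perhaps more transparent, route to the same estimate is a doubling--of--variables argument carried out directly on $u$: one shows $u(t,x)-u(t,y)-(C(T-t)+\|Dg\|_{L^\infty})|x-y|\le0$ by penalizing for compactness, invoking the Crandall--Ishii lemma \cite{crandall1992user} for the second--order terms, and using the structural condition to dominate the first--order and nonlinear contributions — where the semidefiniteness of $\sigma_0\sigma_0^\top$ is again the point requiring care.
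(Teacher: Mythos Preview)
Your approach is essentially the same as the paper's: regularize to a uniformly parabolic problem by adding $\varepsilon I$ to $\sigma_0\sigma_0^\top$ and mollifying $g$, obtain uniform $L^\infty$ and Lipschitz bounds for the approximations via Bernstein's method, and pass to the limit using the stability of viscosity solutions. The paper is in fact terser than you are --- it simply cites \cite[Lemma~2.2]{cardaliaguet2025mean} for the Bernstein step and does not spell out the computation you sketch --- so your identification of where the hypotheses $\sigma_0\in C^{1,1}$ and $|Db_0|+|D\tau_0|\le\lambda_0|\tau_0|$ enter, and your honest flagging of the cubic term $\tau_0(D\tau_0\cdot Dv^\varepsilon)|Dv^\varepsilon|^2$ as the delicate point, go beyond what the paper itself records.
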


\begin{proof}
We perturb $a:=\sigma_0\sigma^{\top}_0$ to $a_{\e}=a+\e I$ and we regularize $g$ by using a standard mollifier $\rho_{\e}$: $g_{\e}=g*\rho_{\e}$. Let $u^{\e}$ be the solution of the (uniformly) parabolic equation 
$$\begin{cases}
    -\partial_t u^\e -\text{tr}(a_{\e} D^2u^{\e}) +\frac{1}{2}\tau_0^2|Du^{\e}|^2+b_0\cdot Du^\e=0,& (t,x)\in [0,T]\times \R^d\\
    u^{\e}(T,x)=g_{\e}(x), & x\in \R^d.
\end{cases}$$
By Bernstein's method, as in \cite[Lemma 2.2]{cardaliaguet2025mean}, the functions $(u^{\e})_{\e\in (0,1)}$ are uniformly bounded and uniformly Lipschitz, independently of $\e$. The stability property of viscosity solutions imply that $u^{\e}$ converges locally uniformly in $[0,T]\times \R^d$ to a function $u$ which is the unique viscosity solution of \eqref{HJgeneral} (with $\mathcal{H}$ as in \eqref{ham}). The proof is complete.
\end{proof}

\noindent
As a corollary, we obtain a regularity result for \eqref{HJgeneral} when $\mathcal{H}$ has the form \eqref{ham}.

\begin{corollary}\label{reg1}Assume that $\mathcal{H}$ is as in \eqref{ham}, $\sigma_0\in C^{1,1}$ and $\tau_0,b_0\in C^2$ is such that $|Db_0|+|D\tau_0|\leq \lambda_0 |\tau_0|$ for some constant $\lambda_0>0$. Furthermore, $u_1,u_2:\R^d\rightarrow \R$ are bounded and Lipschitz. Let $u$ be the unique viscosity solution to equation \eqref{HJgeneral} when \eqref{ham} holds. Then, $u$ is bounded and $u(t,\cdot)$ is Lipschitz uniformly in $t$.\end{corollary}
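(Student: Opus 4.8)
The plan is to read the result off from the recursive definition of a viscosity solution of \eqref{HJgeneral} (Definition \ref{defvisc}(i)), applying Proposition \ref{regprop1} once on each of the two time intervals determined by the Dirac mass at $T_1$. Observe first that the Hamiltonian \eqref{ham}, together with the standing hypotheses $\sigma_0\in C^{1,1}$, $\tau_0,b_0\in C^2$ and $|Db_0|+|D\tau_0|\leq \lambda_0|\tau_0|$, is precisely the setting in which Proposition \ref{regprop1} applies.

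First, on $[T_1,T_2]$ the function $u$ is, by Definition \ref{defvisc}(i), the viscosity solution of $-\partial_t u+\mathcal{H}(x,Du,D^2u)=0$ with terminal datum $u(T_2,\cdot)=u_2$. Since $u_2$ is bounded and Lipschitz, Proposition \ref{regprop1} furnishes a constant $C>0$ with $\|u(t,\cdot)\|_{L^{\infty}}\leq C\|u_2\|_{L^{\infty}}$ and $\|Du(t,\cdot)\|_{L^{\infty}}\leq C\bigl(T_2-t+\|Du_2\|_{L^{\infty}}\bigr)\leq C\bigl(T_2-T_1+\|Du_2\|_{L^{\infty}}\bigr)$ for every $t\in[T_1,T_2]$. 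Taking $t=T_1$, the trace $u(T_1^+,\cdot)$ is a bounded Lipschitz function, and hence so is $g_1:=u_1+u(T_1^+,\cdot)$, with $\|g_1\|_{L^{\infty}}\leq \|u_1\|_{L^{\infty}}+C\|u_2\|_{L^{\infty}}$ and $\|Dg_1\|_{L^{\infty}}\leq \|Du_1\|_{L^{\infty}}+C\bigl(T_2-T_1+\|Du_2\|_{L^{\infty}}\bigr)$.

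Second, on $[0,T_1]$ the function $u$ is, again by Definition \ref{defvisc}(i), the viscosity solution of $-\partial_t u+\mathcal{H}(x,Du,D^2u)=0$ with terminal datum $u(T_1,\cdot)=g_1$. A second application of Proposition \ref{regprop1} (with the bounded Lipschitz datum $g_1$) gives $\|u(t,\cdot)\|_{L^{\infty}}\leq C\|g_1\|_{L^{\infty}}$ and $\|Du(t,\cdot)\|_{L^{\infty}}\leq C\bigl(T_1-t+\|Dg_1\|_{L^{\infty}}\bigr)$ for every $t\in[0,T_1]$. Combining the bounds from the two steps shows that $u$ is bounded on $[0,T_2]\times\R^d$ and that $\sup_{t\in[0,T_2]}\|Du(t,\cdot)\|_{L^{\infty}}<\infty$, i.e. $u(t,\cdot)$ is Lipschitz uniformly in $t$; uniqueness of $u$ in the sense of Definition \ref{defvisc} is immediate, since each of the two sub-problems has a unique viscosity solution by Proposition \ref{regprop1}. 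The argument is essentially bookkeeping: the only point deserving a remark is that the trace $u(T_1^+,\cdot)$ of the solution on $(T_1,T_2]$ inherits the boundedness and Lipschitz estimates, and this is immediate because the bounds in Proposition \ref{regprop1} hold pointwise in $t$ up to the initial time of the interval, so no genuine difficulty arises.
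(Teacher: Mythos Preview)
Your proof is correct and follows exactly the same approach as the paper: apply Proposition~\ref{regprop1} on $(T_1,T_2]$ with terminal datum $u_2$ to deduce that $u(T_1^+,\cdot)$ is bounded and Lipschitz, and then apply Proposition~\ref{regprop1} again on $[0,T_1)$ with terminal datum $u_1+u(T_1^+,\cdot)$. The only difference is that you have spelled out the explicit bounds, which the paper omits.
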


\begin{proof}
By Proposition \eqref{regprop1} on $(T_1,T_2]$, we obtain that $u(t,\cdot)$ is bounded and uniformly Lipschitz when $t\in (T_1,T_2]$. In particular, this implies that $u_1(x)+u(T_1^+,x)$ is bounded and Lipschitz, therefore by Proposition \ref{regprop1} once again, $u(t,\cdot)$ is bounded and uniformly Lipschitz when $t\in [0,T_1)$.
\end{proof}

\vspace{3mm}

\noindent
\textbf{Acknowledgments.}
The author wishes to thank P. E. Souganidis for his support and for many fruitful discussions during the course of this work. This work was written while the author was a Ph.D. student at the University of Chicago and was partially supported by P. E. Souganidis’ NSF grant DMS-1900599, ONR grant N000141712095, and AFOSR grant FA9550-18-1-0494.

\begin{comment}
\begin{remark}
It is worth noting that we can still show that $\Phi$ is continuous, even if $\tau_2$ is not constant, as long as \eqref{controlledSDE'} has the form
    \be \label{newcontrolledSDE'}
\begin{cases}
dX_t=X_t\tilde{\sigma}(Y_t)dW_t,\\
dY_t=\left( b(X_t,Y_t)+ \alpha_t\right)dt+\tau_1(X_t,Y_t) dW_t+\tau_2(X_t,Y_t)dW_t^{\perp},\\
X_{T_1}=x,\;\;Y_{T_1}=y,
\end{cases}\ee
that is, the control $\alpha_t$ is not multiplied by $\tau_2$. The reason is that the Gr\"onwall argument from Step 1 can still be applied, therefore \eqref{"Lip"estimate} holds and the argument from Step 2 flows with no problem.
\red{comment about the semimartingale optimal transport problem.}
\end{remark}
\end{comment}

\bibliographystyle{alpha}		
\bibliography{MOT}

\begin{thebibliography}{GLOW22}

\bibitem[AFP00]{ambrosio2000functions}
Luigi Ambrosio, Nicola Fusco, and Diego Pallara.
\newblock {\em Functions of bounded variation and free discontinuity problems}.
\newblock Oxford university press, 2000.

\bibitem[BB00]{benamou2000computational}
Jean-David Benamou and Yann Brenier.
\newblock A computational fluid mechanics solution to the monge-kantorovich mass transfer problem.
\newblock {\em Numerische Mathematik}, 84(3):375--393, 2000.

\bibitem[BBHK17]{backhoff2017martingale}
Julio Backhoff, Mathias Beiglb{\"o}ck, Martin Huesmann, and Sigrid K{\"a}llblad.
\newblock Martingale benamou--brenier: a probabilistic perspective.
\newblock {\em arXiv preprint arXiv:1708.04869}, 2017.

\bibitem[BS73]{black1973pricing}
Fischer Black and Myron Scholes.
\newblock The pricing of options and corporate liabilities.
\newblock {\em Journal of political economy}, 81(3):637--654, 1973.

\bibitem[BS13]{brunick2013mimicking}
Gerard Brunick and Steven Shreve.
\newblock Mimicking an it{\^o} process by a solution of a stochastic differential equation.
\newblock 2013.

\bibitem[CDLL19]{cardaliaguet2019master}
Pierre Cardaliaguet, Fran{\c{c}}ois Delarue, Jean-Michel Lasry, and Pierre-Louis Lions.
\newblock {\em The master equation and the convergence problem in mean field games:(ams-201)}.
\newblock Princeton University Press, 2019.

\bibitem[CIL92]{crandall1992user}
Michael~G Crandall, Hitoshi Ishii, and Pierre-Louis Lions.
\newblock User’s guide to viscosity solutions of second order partial differential equations.
\newblock {\em Bulletin of the American mathematical society}, 27(1):1--67, 1992.

\bibitem[CSS25]{cardaliaguet2025mean}
Pierre Cardaliaguet, Benjamin Seeger, and Panagiotis Souganidis.
\newblock Mean field games with common noise and degenerate idiosyncratic noise.
\newblock {\em The Annals of Applied Probability}, 35(3):1531--1569, 2025.

\bibitem[Dau23]{daudin2023optimal}
Samuel Daudin.
\newblock Optimal control of the fokker-planck equation under state constraints in the wasserstein space.
\newblock {\em Journal de Math{\'e}matiques Pures et Appliqu{\'e}es}, 175:37--75, 2023.

\bibitem[DLL06]{da2006uniqueness}
Francesca Da~Lio and Olivier Ley.
\newblock Uniqueness results for second-order bellman--isaacs equations under quadratic growth assumptions and applications.
\newblock {\em SIAM journal on control and optimization}, 45(1):74--106, 2006.

\bibitem[GHLT14]{galichon2014stochastic}
Alfred Galichon, Pierre Henry-Labordere, and Nizar Touzi.
\newblock A stochastic control approach to no-arbitrage bounds given marginals, with an application to lookback options.
\newblock 2014.

\bibitem[GLOW21]{guo2021optimal}
Ivan Guo, Gregoire Loeper, Jan Obloj, and Shiyi Wang.
\newblock Optimal transport for model calibration.
\newblock {\em arXiv preprint arXiv:2107.01978}, 2021.

\bibitem[GLOW22]{guo2022joint}
Ivan Guo, Gr{\'e}goire Loeper, Jan Ob{\l}{\'o}j, and Shiyi Wang.
\newblock Joint modeling and calibration of spx and vix by optimal transport.
\newblock {\em SIAM Journal on Financial Mathematics}, 13(1):1--31, 2022.

\bibitem[GLW17]{guo2017local}
Ivan Guo, Gr{\'e}goire Loeper, and Shiyi Wang.
\newblock Local volatility calibration by optimal transport.
\newblock {\em arXiv preprint arXiv:1709.08075}, 2017.

\bibitem[GLW22]{guo2022calibration}
Ivan Guo, Gr{\'e}goire Loeper, and Shiyi Wang.
\newblock Calibration of local-stochastic volatility models by optimal transport.
\newblock {\em Mathematical Finance}, 32(1):46--77, 2022.

\bibitem[Guy20]{guyon2020joint}
Julien Guyon.
\newblock The joint s\&p 500/vix smile calibration puzzle solved.
\newblock {\em Risk, April}, 2020.

\bibitem[Guy22]{guyon2022dispersion}
Julien Guyon.
\newblock Dispersion-constrained martingale schr{\"o}dinger bridges: Joint entropic calibration of stochastic volatility models to s\&p 500 and vix smiles.
\newblock {\em Available at SSRN 4165057}, 2022.

\bibitem[HL19]{henry2019martingale}
Pierre Henry-Labordere.
\newblock From (martingale) schr\"odinger bridges to a new class of stochastic volatility model.
\newblock {\em Available at SSRN 3353270}, 2019.

\bibitem[HMC06]{huang2006large}
Minyi Huang, Roland~P Malham{\'e}, and Peter~E Caines.
\newblock Large population stochastic dynamic games: closed-loop mckean-vlasov systems and the nash certainty equivalence principle.
\newblock 2006.

\bibitem[HT19]{huesmann2019benamou}
Martin Huesmann and Dario Trevisan.
\newblock A benamou--brenier formulation of martingale optimal transport.
\newblock 2019.

\bibitem[Kan48]{kantorovich1948problem}
Leonid~V Kantorovich.
\newblock On a problem of monge.
\newblock In {\em CR (Doklady) Acad. Sci. URSS (NS)}, volume~3, pages 225--226, 1948.

\bibitem[Kat15]{katzourakis2015mollification}
Nikos Katzourakis.
\newblock Mollification of viscosity solutions and semiconvexity.
\newblock {\em An Introduction To Viscosity Solutions for Fully Nonlinear PDE with Applications to Calculus of Variations in $L^{\infty}$}, pages 49--61, 2015.

\bibitem[L{\'e}o13]{leonard2013survey}
Christian L{\'e}onard.
\newblock A survey of the schr\"odinger problem and some of its connections with optimal transport.
\newblock {\em arXiv preprint arXiv:1308.0215}, 2013.

\bibitem[LL07]{lasry2007mean}
Jean-Michel Lasry and Pierre-Louis Lions.
\newblock Mean field games.
\newblock {\em Japanese journal of mathematics}, 2(1):229--260, 2007.

\bibitem[Mon81]{monge1781memoire}
Gaspard Monge.
\newblock M{\'e}moire sur la th{\'e}orie des d{\'e}blais et des remblais.
\newblock {\em Mem. Math. Phys. Acad. Royale Sci.}, pages 666--704, 1781.

\bibitem[MT06]{mikami2006duality}
Toshio Mikami and Mich{\`e}le Thieullen.
\newblock Duality theorem for the stochastic optimal control problem.
\newblock {\em Stochastic processes and their applications}, 116(12):1815--1835, 2006.

\bibitem[OPS19]{orrieri2019variational}
Carlo Orrieri, Alessio Porretta, and Giuseppe Savar{\'e}.
\newblock A variational approach to the mean field planning problem.
\newblock {\em Journal of Functional Analysis}, 277(6):1868--1957, 2019.

\bibitem[Roc06]{rockafellar2006integral}
R~Tyrrell Rockafellar.
\newblock Integral functionals, normal integrands and measurable selections.
\newblock In {\em Nonlinear Operators and the Calculus of Variations: Summer School Held in Bruxelles 8--19 September 1975}, pages 157--207. Springer, 2006.

\bibitem[Sch31]{schrodinger1931umkehrung}
Erwin Schr{\"o}dinger.
\newblock {\em {\"U}ber die Umkehrung der Naturgesetze}.
\newblock Verlag der Akademie der Wissenschaften in Kommission bei Walter de Gruyter und Co., Berlin, 1931.

\bibitem[Sch32]{schrodinger1932theorie}
Erwin Schr{\"o}dinger.
\newblock Sur la th{\'e}orie relativiste de l'{\'e}lectron et l'interpr{\'e}tation de la m{\'e}canique quantique.
\newblock In {\em Annales de l'institut Henri Poincar{\'e}}, volume~2, pages 269--310, 1932.

\bibitem[Str65]{strassen1965existence}
Volker Strassen.
\newblock The existence of probability measures with given marginals.
\newblock {\em The Annals of Mathematical Statistics}, 36(2):423--439, 1965.

\bibitem[TT13]{tan2013optimal}
Xiaolu Tan and Nizar Touzi.
\newblock Optimal transportation under controlled stochastic dynamics.
\newblock {\em The annals of probability}, pages 3201--3240, 2013.

\bibitem[Vil21]{villani2021topics}
C{\'e}dric Villani.
\newblock {\em Topics in optimal transportation}, volume~58.
\newblock American Mathematical Soc., 2021.

\end{thebibliography}

\end{document}